\newtheorem{theorem}		{Theorem}[section]
\newtheorem{lemma}		[theorem]{Lemma}
\newtheorem{sideremark}		[theorem]{Remark}
\newenvironment{remark}		{\begin{sideremark}\rm}{\end{sideremark}}
\newcommand{\rev}[1]		{{{#1}}}
\DeclareMathOperator*{\argmax}{arg\,max}
\DeclareMathOperator*{\stat}{stat}
\DeclareMathOperator*{\argstat}{arg\,stat}
\newcommand{\Frechet}		{{Fr\'{e}chet}}
\newcommand{\opdot}[1]		{{\dot{\op{#1}}}}
\newcommand{\ophat}[1]		{{\wh{\op{#1}}}}
\newcommand{\ul}[1]			{{\underline{#1}}}
\newcommand{\cSvex}[1]		{{\mathscr{S}_+^{\op{#1}}}}
\newcommand{\cSave}[1]		{{\mathscr{S}_-^{\op{#1}}}}
\newcommand{\cSv} 		{{\cSvex{-M}}}
\newcommand{\cSa} 		{{\cSave{-M}}}
\newcommand{\cl}			{{\text{cl}}}
\newcommand{\lsc}			{{\text{lsc}}}
\newcommand{\ba}				{\begin{array}}
\newcommand{\ea}				{\end{array}}
\newcommand{\er}[1]			{{(\ref{#1})}}
\newcommand{\nn}				{{\nonumber}}
\newcommand{\ddtone}[2]			{{\frac{d{#1}}{d{#2}}}}
\newcommand{\pdtone}[2]			{{\frac{\partial {#1}}{\partial {#2}}}}
\newcommand{\ts}[1]				{{\textstyle{#1}}}
\newcommand{\dom}			{{\mathsf{dom\, }}}
\newcommand{\funspace}[1]		{{\mathscr{#1}}}
\newcommand{\ol}[1]				{{\overline{#1}}}
\newcommand{\op}[1]			{{\mathcal{#1}}}
\newcommand{\wh}[1]			{{\widehat{#1}}}
\newcommand{\wt}[1]			{{\widetilde{#1}}}
\newcommand{\optilde}[1]			{{\wt{\mathcal{#1}}}}
\newcommand{\N}				{{\mathbb{N}}}
\newcommand{\Q}				{{\mathbb{Q}}}
\newcommand{\R}				{{\mathbb{R}}}
\newcommand{\Z}				{{\mathbb{Z}}}
\newcommand{\cL}				{{\funspace{L}}}
\newcommand{\cU}				{{\funspace{U}}}
\newcommand{\cW}				{{\funspace{W}}}
\newcommand{\cX}				{{\funspace{X}}}
\newcommand{\cY}				{{\funspace{Y}}}
\newcommand{\bo}				{{\mathcal{L}}}
\newcommand{\demi}			{{\ts{\frac{1}{2}}}}
\newcommand{\eps}				{{\epsilon}}
\newcommand{\grad}			{{\nabla}}
\newcommand{\half}				{{\frac{1}{2}}}
\newcommand{\Ltwo}			{{{\cL}_2}}
\newcommand{\ltwo}				{{$\Ltwo$}}
\newcommand{\opA}				{{\Lambda}}
\newcommand{\opAsqrt}			{{\opA^\half}}
\begin{document}


\title{Verifying fundamental solution groups for lossless wave equations via stationary action and optimal control}



\author{Peter M. Dower%
\thanks{%
Department of Electrical \& Electronic Engineering, University of Melbourne, Victoria 3010, Australia.
\tt{pdower@unimelb.edu.au}}
\and
William M. McEneaney%
\thanks{%
Department of Mechanical \& Aerospace Engineering, University of California at San Diego, 9500 Gilman Drive, La Jolla CA 92093-0411, USA.
\tt{wmceneaney@eng.ucsd.edu}}}

\date{}

\maketitle



\begin{abstract}
A representation of a fundamental solution group for a class of wave equations is constructed by exploiting connections between stationary action and optimal control. By using a Yosida approximation of the associated generator, an approximation of the group of interest is represented for sufficiently short time horizons via an idempotent convolution kernel that describes all possible solutions of a corresponding short time horizon optimal control problem. It is shown that this representation of the approximate group can be extended to arbitrary longer horizons via a concatenation of such short horizon optimal control problems, provided that the associated initial and terminal conditions employed in concatenating trajectories are determined via a stationarity rather than an optimality based condition. The long horizon approximate group obtained is shown to converge strongly to the exact group of interest, under reasonable conditions. The construction is illustrated by its application to the solution of a two point boundary value problem.\\[0mm]

{\bf Keywords.}
Optimal control, stationary action, dynamic programming, wave equations, fundamental solution groups. two point boundary value problems. MSC2010: 35L05, 49J20, 49L20.
\end{abstract}


\section{Introduction}
\label{sec:intro}
The {\em action principle} \cite{F:48,F:64,GT:07,MD1:15} is a variational principle underpinning modern physics that may be applied to a predefined notion of {\em action} to yield the equations of motion of a physical system and its underlying conservation laws. \rev{With a suitable definition of this action, the action principle specialises to {\em Hamilton's action principle}, an important corollary of which states that}
\begin{center}
\vspace{-1mm}
\parbox[t]{14cm}{%
\centering
{\em any trajectory of an energy conserving system renders the corresponding action functional stationary in the calculus of variation sense.
}
}
\vspace{1mm}
\end{center}
\rev{Consequently, Hamilton's action principle} can be interpreted as providing a characterisation of all solutions of \rev{an energy conserving or {\em lossless}} system. This interpretation motivates the development summarised in this work, with \rev{Hamilton's action principle} applied via an optimal control representation to construct the fundamental solution group corresponding to a lossless wave equation. The specific wave equation of interest is given by
\begin{align}
	\ddot x
	& = -\opA\, x\,,
	\label{eq:wave}
\end{align}
where $\opA$ is a linear, unbounded, positive, self-adjoint operator defined on a domain $\cX_2$ dense in an {\ltwo}-space $\cX$, with a compact inverse $\opA^{-1}\in\bo(\cX)$. The results presented generalise the recent work \cite{DM1:17} from the specific \rev{case} $\opA \doteq -\partial^2$ to any unbounded operator $\opA$ satisfying the stated assumptions.

In order to apply \rev{Hamilton's action principle in this development}, compatible notions of kinetic and potential energy are defined with respect to generalised notions of momentum (or velocity) and position that correspond respectively to the input and mild solution of an abstract Cauchy problem \cite{P:83,CZ:95}. This allows the integrated action to be rigorously defined as a time horizon parameterised functional of the momentum (velocity) input. Unlike the finite dimensional case, this action functional is neither convex nor concave for any time horizon, thereby preventing an immediate generalisation of the optimal control approach of \cite{MD1:15} to its analysis. As a remedy, a corresponding approximate class of wave equations is considered as an interim step, in which the unbounded linear operator involved is replaced by its (bounded) Yosida approximation. This yields a corresponding action functional that is strictly concave for sufficiently short (but positive) time horizons. The integrated action is subsequently analysed using tools from optimal control theory, semigroup theory, and idempotent analysis, see for example \cite{P:83,CZ:95,KM:97,DM1:15,DM1:17}. In particular, an {\em idempotent fundamental solution semigroup} applicable on sufficiently short horizons is used to represent the value function of the attendant optimal control problem as an idempotent convolution of a bivariate kernel with a terminal cost. As the characteristics associated with this optimal control problem must correspond to solutions of the approximate wave equation by stationary action, the idempotent fundamental solution semigroup is subsequently used to construct a short horizon prototype for the fundamental solution group for the aforementioned approximate wave equation.  These short horizon prototypes are pieced together into long horizon prototypes using the staticisation operation {\em stat} of \cite{MD1:17}, with the latter converging strongly to the fundamental solution group of the exact wave equation as the Yosida approximation converges strongly to the generator.

In terms of organisation, exact and approximate fundamental solutions groups for the lossless wave equation \er{eq:wave} are first established in Section \ref{sec:groups}. Independently, an optimal control problem that encapsulates \rev{Hamilton's action} principle is introduced in Section \ref{sec:control}.
This control problem is then employed in Section \ref{sec:group} to recover the long-horizon group representation of interest, via a concatenation of short horizon prototypes, thereby confirming the groups of Section \ref{sec:groups}. An application of this representation, and related arguments, to solving a two point boundary value problem (TPBVP) involving \er{eq:wave} is considered in Section \ref{sec:TPBVP}. The paper concludes with some brief remarks in Section \ref{sec:conc}. 
Throughout, $\R$ ($\R_{\ge 0}$) denotes the real (nonnegative) numbers, $\ol{\R}\doteq\R\cup\{\pm\infty\}$,
$\Q$ denotes the rationals, and $\N$ denotes the natural numbers.




\rev{
\section{Exact and approximate fundamental solution groups}
\label{sec:groups}

As $\opA$ is a linear, unbounded, positive, and self-adjoint operator, it possesses a unique, linear, unbounded, positive, and self-adjoint square-root, denoted throughout by $\opAsqrt$. The domain of $\opAsqrt$ defines a Hilbert space, with
\begin{equation}
	\cX_1 \doteq \dom(\opAsqrt) = \ol{\cX}_2\,, 
	\quad\langle x,\xi \rangle_1 \doteq \langle \opAsqrt\, x,\, \opAsqrt\, \xi \rangle\,,
	\label{eq:X-half}
\end{equation}
for all $x,\xi\in\cX_1$, in which $\langle\,, \rangle$ represents the inner product on $\cX$, and $\cX_2\doteq\dom(\opA)$. For convenience, define operator $\op{I}_\mu$ in terms of the resolvent $\op{R}_{-\opA}(\ts{\frac{1}{\mu^2}})$ of $-\opA$ by
\begin{align}
	&
	\op{I}_\mu \doteq \ts{\frac{1}{\mu^2}} \op{R}_{-\opA}(\ts{\frac{1}{\mu^2}}) = (\op{I} + \mu^2\, \opA)^{-1}. 
	\label{eq:op-I-mu}
\end{align}
Boundedness of $\op{I}_\mu$ follows by the Hille-Yosida theorem \cite{P:83}, and in particular it may be identified as an element of $\bo(\cX)$, $\bo(\cX_1)$, or indeed $\bo(\cX;\cX_1)$, etc.


Using definitions \er{eq:X-half} and \er{eq:op-I-mu}, wave equation \er{eq:wave} 
motivates consideration of the linear generators 
\begin{gather}
	\begin{aligned}
	\op{A}
	& \doteq \left( \ba{cc}
		0 & \op{I}
		\\
		-\opA
		& 0
	\ea \right)\!,
	&&
	\dom(\op{A}) \doteq \cY_1\doteq\cX_2\times\cX_1,
	\\
	\op{A}^\mu
	& \doteq \left( \ba{cc}
		0 & \op{I}_\mu^\half
		\\
		-\opA^\half\, \op{I}_\mu^\half\, \opA^\half
		& 0
	\ea \right)\!,
	&&
	\dom(\op{A}^\mu) \doteq \cY\doteq\cX_1\times\cX,
	\end{aligned}
	\label{eq:generators}
\end{gather}
in which it may be noted that $\cY$ defines a Hilbert space with $\langle (x,p), (\xi,\pi) \rangle_\cY \doteq \langle x,\xi \rangle_1 + \langle p,\pi \rangle$ for all $(x,p), (\xi,\pi)\in\cY$, and $\cY_1$ is dense in $\cY$. 
As $\opA$ in \er{eq:wave}, \er{eq:generators} has a compact inverse, the spectral theorem (see for example, \cite[Theorem A.4.25, p.619]{CZ:95}) implies that $\opA$ has the representations
\begin{equation}
	\begin{aligned}
	\opA\, \xi
	& = \sum_{n=1}^\infty \lambda_n \, \langle \xi, \, \tilde\varphi_n \rangle_1\, \tilde\varphi_n\,,
	&& \xi\in\cX_1,
	\\
	\opA\, \pi
	& = \sum_{n=1}^\infty \lambda_n \, \langle \pi, \, \varphi_n \rangle \, \varphi_n\,,
	&& \pi\in\cX,
	\end{aligned}
	\label{eq:spectral-opA}
\end{equation}
on $\cX_1$ and $\cX$ respectively. Here, the set of all eigenvalues $\{\lambda_n^{-1}\}_{n\in\N}$ of compact $\Lambda^{-1}$ defines a strictly positive and strictly decreasing sequence in $\R_{>0}$ \rev{satisfying $0 = \lim_{n\rightarrow\infty} \lambda_n^{-1}$}, while $\{\tilde\varphi_n\}_{n\in\N}$, $\{\varphi_n\}_{n\in\N}$ denote respectively the corresponding sets of orthonormal eigenvectors in $\cX_1$, $\cX$, with $\varphi_n \doteq \sqrt{\lambda_n} \, \tilde\varphi_n$. 

\begin{remark}
\label{rem:op-I-eig}
Given $\mu\in(0,1]$, operators $(\opA\, \op{I}_\mu)^\half$ and $\opA\, \op{I}_\mu\equiv\Lambda^\half\, \op{I}_\mu\, \Lambda^\half$ defined via \er{eq:op-I-mu} and \er{eq:spectral-opA} naturally inherit the spectral form \er{eq:spectral-opA}, see for example the proof of Lemma \ref{lem:op-properties} later. For these specific operators, residing in $\bo(\cX_1)$, the corresponding eigenvalues are given by
\begin{align}	
	\omega_n^\mu
	& \doteq \sqrt{\lambda_n^\mu}\,,
	\quad
	\lambda_n^\mu
	\doteq \frac{\lambda_n}{1 + \mu^2 \, \lambda_n}\,,
	\quad n\in\N,
	\label{eq:omega-lambda}
\end{align}
respectively. 
\hfill{$\square$}
\end{remark}
These definitions and representations imply the following properties of $\op{A}$ and $\op{A}^\mu$ of \er{eq:generators}.
\begin{theorem} \cite{DM1:17,DM3:15,DM4:15}
\label{thm:generators}
Given $\mu\in\R_{>0}$, operators $\op{A}$ and $\op{A}^\mu$ of \er{eq:generators} satisfy the following properties:
\begin{enumerate}[\em (i)]
\item $\op{A}$ is unbounded, closed, and densely defined on $\cY_1\subset\cY$, $\ol{\cY_1} = \cY$;
\item $\op{A}$ generates a strongly continuous group of bounded linear operators $\{ \op{U}_t \}_{t\in\R}\subset\bo(\cY_1)$;
\item $\op{A}^\mu\in\bo(\cY)$;
\item $\op{A}^\mu$ generates a uniformly continuous group of bounded linear operators $\{ \op{U}_t^\mu \}_{t\in\R}\subset\bo(\cY)$, with
\begin{align}
	\op{U}_t^\mu
	& = \left( \ba{c|c}
		[\op{U}_t^\mu]_{11} &
		[\op{U}_t^\mu]_{12} \\[0mm]
		& \\[-2mm]\hline
		& \\[-2mm]
		[\op{U}_t^\mu]_{21} &
		[\op{U}_t^\mu]_{22}
	\ea \right) 	= \exp(t\, \op{A}^\mu), \ t\in\R,
	\nn\\[-5mm]
	& \label{eq:group-mu}	
\end{align}
and $[\op{U}_t^\mu]_{11}\in\bo(\cX_1)$, $[\op{U}_t^\mu]_{12}\in\bo(\cX;\cX_1)$, $[\op{U}_t^\mu]_{21}\in\bo(\cX_1;\cX)$, $[\op{U}_t^\mu]_{22}\in\bo(\cX)$ given by
\begin{align}
	& [\op{U}_t^\mu]_{11} \, \xi
	\doteq
	\sum_{n=1}^\infty \cos(\omega_n^\mu\, t)\, \langle \xi, \tilde\varphi_n \rangle_1\, \tilde\varphi_n,
	& \hspace{-2mm}
	[\op{U}_t^\mu]_{12} \, \pi
	\doteq
	\sum_{n=1}^\infty \sin(\omega_n^\mu\, t)\, \langle \pi, \varphi_n \rangle\, \tilde\varphi_n,
	\nn\\
	& [\op{U}_t^\mu]_{21} \, \xi
	\doteq
	-\!\! \sum_{n=1}^\infty \sin(\omega_n^\mu\, t)\, \langle \xi, \tilde\varphi_n \rangle_1\, \varphi_n,
	& \hspace{-2mm}
	[\op{U}_t^\mu]_{22} \, \pi
	\doteq
	\sum_{n=1}^\infty \cos(\omega_n^\mu\, t)\, \langle \pi, \varphi_n \rangle\, \varphi_n,
	\label{eq:group-mu-elmts}
\end{align}
for all $\xi\in\cX_1$, $\pi\in\cX$. Moreover, there exist $M,\omega\in\R_{\ge 0}$ independent of $\mu$ such that
\begin{align}
	\|\op{U}_s^\mu\|_{\bo(\cY)} 
	& \le M\, \exp(\omega\, s)
	\quad \forall \ s\in\R;
	\label{eq:uniform-semigroup-bound}
\end{align}
\item $\op{A}^\mu$ converges strongly to $\op{A}$ on $\cY_1$ as $\mu\rightarrow 0$, i.e. $\lim_{\mu\rightarrow 0} \|\op{A}^\mu\, y - \op{A}\, y\|_\cY = 0$ for all $y\in\cY_1$;
\item $\op{U}_t^\mu$ converges strongly to $\op{U}_t$, uniformly in $t$ in compact intervals, i.e. $\lim_{\mu\rightarrow 0} \| \op{U}_t^\mu\, y - \op{U}_t\, y \|_\cY = 0$ for all $y\in\cY$, uniformly in $t\in\Omega$, $\Omega\subset\R$ compact.
\end{enumerate}
\end{theorem}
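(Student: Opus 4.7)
My plan is to exploit the spectral representation \er{eq:spectral-opA} of $\opA$ to reduce every claim to a scalar statement on individual eigenmodes, organising the argument around a Stone-type construction of the two groups and a Trotter--Kato passage from $\op{A}^\mu$ to $\op{A}$. For (i), closedness and dense definition of $\op{A}$ on $\cY_1 = \cX_2 \times \cX_1$ would follow from the corresponding properties of $\opA$ on $\cX_2$ together with density of $\cX_2$ in $\cX_1$ and of $\cX_1$ in $\cX$; unboundedness is inherited from $\opA$. For (ii), I would construct $\op{U}_t$ directly by the mode-wise formula analogous to \er{eq:group-mu-elmts} but with $\omega_n^\mu$ replaced by $\omega_n = \sqrt{\lambda_n}$, verify strong continuity and the group law on the eigenbasis and extend by density, and identify the generator as $\op{A}$ on $\cY_1$; alternatively, skew-adjointness of $\op{A}$ on $\cY$ can be established from the identity $\langle x, \xi\rangle_1 = \langle \opA x, \xi\rangle$ for $x \in \cX_2$, $\xi \in \cX_1$, and Stone's theorem invoked.

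For (iii), I would use the functional calculus afforded by \er{eq:spectral-opA} to propagate the spectral form to $\op{I}_\mu^\half$ and $\opAsqrt\op{I}_\mu^\half\opAsqrt$, whose eigenvalues $(1 + \mu^2 \lambda_n)^{-1/2}$ and $\lambda_n^\mu$ are uniformly bounded in $n$ for fixed $\mu > 0$, giving $\op{A}^\mu \in \bo(\cY)$. For (iv), boundedness of $\op{A}^\mu$ immediately yields the uniformly continuous group $\op{U}_t^\mu = \exp(t\op{A}^\mu)$. To derive the explicit form \er{eq:group-mu-elmts}, I would expand $y = (x, p) \in \cY$ in the orthonormal basis $\{(\tilde\varphi_n, 0), (0, \varphi_n)\}_{n\in\N}$ of $\cY$, observe that $\op{A}^\mu$ leaves each two-dimensional mode invariant with action $\omega_n^\mu \bigl(\begin{smallmatrix} 0 & 1 \\ -1 & 0\end{smallmatrix}\bigr)$ on the coordinates $(a_n, b_n)$, and exponentiate using the scalar identity $\exp\bigl(t\omega_n^\mu \bigl(\begin{smallmatrix} 0 & 1 \\ -1 & 0\end{smallmatrix}\bigr)\bigr) = \bigl(\begin{smallmatrix} \cos(\omega_n^\mu t) & \sin(\omega_n^\mu t) \\ -\sin(\omega_n^\mu t) & \cos(\omega_n^\mu t)\end{smallmatrix}\bigr)$. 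Parseval's identity then gives $\|\op{U}_t^\mu y\|_\cY^2 = \|y\|_\cY^2$, so \er{eq:uniform-semigroup-bound} holds with $M=1$, $\omega=0$ uniformly in $\mu$.

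For (v), fix $y = (x, p) \in \cY_1$ and compute $\op{A} y - \op{A}^\mu y = \bigl((\op{I} - \op{I}_\mu^\half) p,\, -(\op{I} - \op{I}_\mu^\half)\opA x\bigr)$, where commuting $\opAsqrt$ through $\op{I}_\mu^\half$ is justified by functional calculus. Each component norm becomes a series $\sum_n \bigl(1 - (1+\mu^2\lambda_n)^{-1/2}\bigr)^2 |c_n|^2$ with $\{c_n\} \in \ell^2$ inherited from $p \in \cX_1$ or $\opA x \in \cX$; dominated convergence sends this to $0$. For (vi), the uniform semigroup bound from (iv) together with strong convergence of the generators on the dense core $\cY_1$ invokes the Trotter--Kato approximation theorem (see e.g.\ \cite{P:83}) to yield the desired strong convergence of the groups, uniformly on compact time intervals.

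The main delicacy, requiring careful bookkeeping, is the simultaneous interaction of the unbounded $\opA$, $\opAsqrt$, the bounded Yosida-type operators, and the three different inner products carried by $\cX_2 \subset \cX_1 \subset \cX$. In particular, identifying $\opAsqrt\op{I}_\mu^\half\opAsqrt$ as an element of $\bo(\cX_1; \cX)$ with the claimed spectral form requires either Borel functional calculus for $\opA$ or a definition-plus-closure argument starting on $\cX_2$; the same calculus underpins the commutation of $\opAsqrt$ with $(\op{I} - \op{I}_\mu^\half)$ used in part (v), as well as the verification of skew-adjointness and invariance of $\cY_1$ under $\op{U}_t$ in (i)--(ii).
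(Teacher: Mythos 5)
Your proposal is correct and matches the intended approach: the paper itself omits the details, pointing to the analogous Lemma 16 of \cite{DM1:17}, but the spectral representation \er{eq:spectral-opA} is used throughout the paper (and, as you observe, $\op{A}^\mu$ acts on each mode $\{(\tilde\varphi_n,0),(0,\varphi_n)\}$ as $\omega_n^\mu\bigl(\begin{smallmatrix}0&1\\-1&0\end{smallmatrix}\bigr)$, which exponentiates to \er{eq:group-mu-elmts}), while the conclusions explicitly invoke the Trotter--Kato theorem for the passage (vi). Your additional observation that skew-adjointness of $\op{A}^\mu$ on $\cY$ gives unitarity, hence \er{eq:uniform-semigroup-bound} with $M=1$, $\omega=0$, is a clean sharpening consistent with the stated bound.
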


\rev{
\begin{proof}
The proof follows analogously to that of \cite[Lemma 16]{DM1:17} and the details are omitted.
\end{proof}
}


Theorem \ref{thm:generators} and \cite[Theorem 1.3, p.102]{P:83} subsequently imply that there exist unique solutions of the respective abstract Cauchy problems defined via \er{eq:generators} by
\begin{align}
	\left( \ba{c}
		\dot x_s
		\\
		\dot p_s
	\ea \right)
	& = \op{A} \left( \ba{c}
		x_s
		\\
		p_s
	\ea \right)\!,
	& \hspace{-2mm}
	\left( \ba{c}
		\dot \xi_s
		\\
		\dot \pi_s
	\ea \right)
	& = \op{A}^\mu \left( \ba{c}
		\xi_s
		\\
		\pi_s
	\ea \right)\!,
	\ s\in\R,
	\nn\\
	(x_0,p_0) & = (x,p)\in\cY_1,
	& (\xi_0,\pi_0) & = (\xi,\pi)\in\cY\!,
	\nn\\[-5.5mm]
	& 
	& \label{eq:Cauchy}
\end{align}
and that these solutions are continuously differentiable. Hence, $\ddot x_s$ and $\ddot \xi_s$ exist by inspection of \er{eq:generators}, \er{eq:Cauchy}, and satisfy
\begin{align}
	\ddot x_s
	& = -\opA\, x_s\,,
	\quad
	\ddot \xi_s
	= -\opA\, \op{I}_\mu\, \xi_s\,,
	\label{eq:wave-mu}
\end{align}
for all $s\in\R$. That is, \er{eq:wave} holds, as does its approximation obtained from \er{eq:wave} by replacing $-\opA$ with its Yosida approximation $-\opA\, \op{I}_\mu$, see \cite[p.9]{P:83}. Given the groups $\{\op{U}_s\}_{s\in\R}$ and $\{\op{U}_s^\mu\}_{s\in\R}$ generated by $\op{A}$ and $\op{A}^\mu$ as per Theorem \ref{thm:generators}, these solutions necessarily satisfy
\begin{align}
	\left( \ba{c}
		x_s
		\\
		p_s
	\ea \right)
	& = \op{U}_s \left( \ba{c}	
			x
			\\ 
			p
	\ea \right)\!,
	& \hspace{-2mm}
		\left( \ba{c}
		\xi_s
		\\
		\pi_s
	\ea \right)
	& = \op{U}_s^\mu \left( \ba{c}	
			\xi
			\\ 
			\pi
	\ea \right)\!,
	\ s\in\R.
	\nn
\end{align}
The purpose of the analysis that follows is to verify these groups via a construction that appeals only to connections between Hamilton's action principle and optimal control.
}


\section{\rev{Hamilton's action principle} as an optimal control problem}
\label{sec:control}

\rev{In order to apply Hamilton's action principle, the associated action may be defined as} the additive inverse of the integrated Lagrangian \cite{DM1:17,DM3:15,DM4:15}. Explicitly, it is given by
\begin{align}
	&
	\int_0^t V(x_s) - T(p_s) \, ds\,,
	\label{eq:action}
\end{align}
in which $V(x_s)$ and $T(p_s)$ denote the potential and kinetic energies corresponding to a generalized position $x_s$ and momentum $p_s$, at time $s\in\R_{\ge 0}$. 
\rev{To this end, explicitly define the} potential and kinetic energies $V:\cX_1\rightarrow\R_{\ge 0}$ and $T:\cX\rightarrow\R_{\ge 0}$ by
\begin{align}
	\begin{gathered}
	V(x)
	\doteq \demi\, \|x\|_1^2\,,
	\quad
	T(p)
	\doteq \demi\, \|\op{J}\, p\|_1^2 = \demi\, \| p \|^2 \,,
	\end{gathered}
	\label{eq:V-T}
\end{align}
for all $x\in\cX_1$, $p\in\cX$, in which $\op{J} \doteq (\opA^\half)^{-1}$ \rev{and $\|p\|^2 \doteq \langle p,\, p \rangle$}. Note that $\op{J}\in\bo(\cX;\cX_1)$, as $\opA^{-1}$ is bounded.
Stationarity of \er{eq:action} may be encapsulated via an optimal control problem as per \cite{DM1:17,DM3:15,DM4:15}. 


\subsection{Optimal control problem}

With $\cW[0,t]\doteq\Ltwo([0,t];\cX)$, the action \er{eq:action} motivates definition of payoff $J_t:\cX\times\cW[0,t]\rightarrow\R$ for an optimal control problem defined on horizon $t\in\R_{\ge 0}$, with 
\begin{align}
	J_t(x,w) = J_t[\psi](x,w)
	& \doteq \int_0^t V(x_s) - T(w_s) \, ds + \psi(x_t)\,,
	\label{eq:payoff}
	\\
	x_s
	& \doteq x + \int_0^s w_\sigma \, d\sigma\,,
	\quad x\in\cX_1, \ s\in[0,t]\,,
	\label{eq:dynamics}
\end{align}
in which $w\in\cW[0,t]$, and $\psi:\cX_1\rightarrow\ol{\R}$ is 
\rev{a terminal payoff} to be selected later. Unlike finite dimensional problems \cite{MD1:15}, it may be shown that $J_t(x,\cdot)$ need not be concave for any time horizon $t\in\R_{>0}$\rev{, see for example \cite{DM1:17} for a special case.} 
Consequently,
an approximation of \er{eq:payoff} that is concave for sufficiently short, strictly positive, time horizons is first considered. This approximation, denoted by $J_t^\mu:\cX_1\times\cW_1[0,t]\rightarrow\R$, $t\in\R_{>0}$, $\mu\in\R_{>0}$, is defined subject to \er{eq:dynamics} by
\begin{align}
	J_t^\mu(x,w) = J_t^\mu[\psi](x,w)
	& \doteq \int_0^t V(x_s) - T^\mu(w_s)\, ds + \psi(x_t)
	\label{eq:payoff-mu}
\end{align}
for all $x\in\cX_1$, $w\in\cW_1[0,t] \doteq \Ltwo([0,t];\cX_1)$, with $T^\mu:\cX_1\rightarrow\R_{\ge 0}$ approximating $T$ in \er{eq:V-T}, \er{eq:payoff} by
\begin{align}
	T^\mu(w)
	\doteq \demi\, \|w\|^2 + \ts{\frac{\mu^2}{2}}\, \|w\|_1^2 
	& = \demi\, \langle w,\, (\Lambda^{-1} + \mu^2 \, \op{I})\, w \rangle_1
	= \demi\, \langle w,\, \op{I}_\mu^{-1} \, \Lambda^{-1}\, w \rangle_1 
	\nn\\
	& = \demi\, \| (\opA\, \op{I}_\mu)^{-\half}\, w \|_1^2,
	\label{eq:T-mu}		
\end{align}
for all $w\in\cX_1$\rev{, in which $\op{I}_\mu\in\bo(\cX_1)$ is as per \er{eq:op-I-mu}.}
By the asserted properties of $\opA$, $(\opA\, \op{I}_\mu)^{-1} = \opA^{-1} + \mu^2\, \op{I}$ is bounded, positive and self-adjoint on $\cX_1$, and so has a unique bounded positive self-adjoint square-root as per \er{eq:T-mu}.
Similarly, unique $\op{I}_\mu^\half\in\bo(\cX_1)$ exists and commutes with $\opA^\half$ on $\cX_1$. 
As $\cX_1$ is dense in $\cX$, note also that $T^0 \equiv T$.

\begin{lemma} \cite{DM1:17,DM3:15,DM4:15}
\label{lem:concave}
Given \rev{a concave} terminal payoff $\psi:\cX_1\rightarrow\ol{\R}$, the approximate payoff $J_t^\mu(x,\cdot):\cW_1\rightarrow\R_{\ge 0}$ is concave for all $t\in[0,\bar t^\mu)$, and strongly concave for all $t\in(\eps,\bar t^\mu)$, for \rev{any} fixed $\eps\in\R_{>0}$, where $\bar t^\mu \doteq \mu\sqrt{2}$.
\end{lemma}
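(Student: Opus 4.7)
The plan is to verify concavity of $J_t^\mu(x,\cdot)$ directly from the quadratic-norm polarization identity $\|\lambda a + (1-\lambda) b\|^2 = \lambda\|a\|^2 + (1-\lambda)\|b\|^2 - \lambda(1-\lambda)\|a-b\|^2$, applied separately inside $V$ and $T^\mu$, so that matters reduce to a single inequality between two quadratic forms in the perturbation direction.

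Fix $x\in\cX_1$, $w,v\in\cW_1[0,t]$, $\lambda\in[0,1]$, and set $u := \lambda w + (1-\lambda)v$, $h := w - v$, and $y_s := \int_0^s h_\sigma\, d\sigma$. By linearity of the integrated dynamics \er{eq:dynamics} in the control argument, $x_s^u = \lambda\, x_s^w + (1-\lambda)\, x_s^v$ and $x_s^w - x_s^v = y_s$. Concavity of $\psi$ handles the terminal term directly, and applying the polarization identity to $V(x_s^u) = \demi\|x_s^u\|_1^2$ and to $T^\mu(u_s)$, together with the factorization $\langle\cdot,\,(\opA\,\op{I}_\mu)^{-1}\cdot\rangle_1 = \|\cdot\|^2 + \mu^2\,\|\cdot\|_1^2$ read off from \er{eq:T-mu}, yields
\[
	J_t^\mu(x,u) - \lambda\, J_t^\mu(x,w) - (1-\lambda)\, J_t^\mu(x,v)
	\;\ge\; \tfrac{\lambda(1-\lambda)}{2} \int_0^t \bigl(\|h_s\|^2 + \mu^2\,\|h_s\|_1^2 - \|y_s\|_1^2\bigr)\,ds\,.
\]

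The quantitative step is a Hardy/Wirtinger-type estimate on $y$: Cauchy--Schwarz in time gives $\|y_s\|_1^2 \le s\int_0^s \|h_\sigma\|_1^2\,d\sigma$, and Fubini then gives $\int_0^t \|y_s\|_1^2\,ds \le \tfrac{t^2}{2}\,\|h\|_{\cW_1[0,t]}^2$. Substituting this back, the bracketed integrand is bounded below by $\int_0^t \|h_s\|^2\,ds + (\mu^2 - t^2/2)\,\|h\|_{\cW_1[0,t]}^2$. For $t\in[0,\bar t^\mu)$ with $\bar t^\mu = \mu\sqrt{2}$, the coefficient $\mu^2 - t^2/2$ is nonnegative, yielding concavity; for $t\in(\eps,\bar t^\mu)$ it is strictly positive, yielding strong concavity with modulus $\mu^2 - t^2/2 > 0$ in the $\cW_1[0,t]$-norm.

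The main (ultimately routine) obstacle is pinning down the Wirtinger-type constant sharply enough to identify the threshold $\bar t^\mu = \mu\sqrt{2}$ exactly; the elementary Cauchy--Schwarz plus Fubini estimate above happens to reproduce this value precisely. It is also essential to keep the two contributions $\|h_s\|^2$ and $\mu^2\|h_s\|_1^2$ from $T^\mu$ separated, since only the $\cX_1$-piece dimensionally matches the $\|y_s\|_1^2$ term coming from $V$, while the $\cX$-piece provides an extra reserve of concavity that remains available independently of the horizon constraint.
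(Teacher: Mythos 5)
Your argument is correct and self-contained. Note first that the paper itself does not prove Lemma~\ref{lem:concave}; it is cited from \cite{DM1:17,DM3:15,DM4:15}, so there is no in-text proof to compare against directly. Your route via the polarization identity applied termwise (to the $V$ integrand, the $T^\mu$ integrand via the splitting $T^\mu(w)=\demi\|w\|^2+\frac{\mu^2}{2}\|w\|_1^2$, and concavity of $\psi$), followed by the Hardy-type estimate
$\int_0^t\|y_s\|_1^2\,ds\le\frac{t^2}{2}\|h\|^2_{\cW_1[0,t]}$
from Cauchy--Schwarz and Fubini, is essentially the unique elementary proof of the stated threshold, and it reproduces $\bar t^\mu=\mu\sqrt{2}$ exactly: requiring $\mu^2-t^2/2>0$ is precisely $t<\mu\sqrt{2}$. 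Two small remarks. First, you can sharpen your own wording: the coefficient $\mu^2-t^2/2$ is not merely nonnegative but \emph{strictly positive} on all of $[0,\bar t^\mu)$; the reason the lemma excludes a neighbourhood of $t=0$ from the strong-concavity claim is not the sign of that coefficient, but that $J_0^\mu(x,\cdot)=\psi(x)$ is constant in $w$ (the integral and the $\cW_1[0,t]$-norm both vanish), so strong concavity degenerates in the limit $t\downarrow 0$ even though the modulus stays bounded away from zero. Second, your observation that Cauchy--Schwarz/Fubini is not sharp is consistent with Remark~\ref{rem:escape}: the genuine loss of concavity, read off from the first finite-escape time $\inf_n\frac{\pi}{2\omega_n^\mu}=\frac{\pi}{2}\mu$, occurs at $\frac{\pi}{2}\mu\approx 1.11\,\bar t^\mu$, which is exactly what the sharp Wirtinger constant $(2t/\pi)^2$ would give in place of your $t^2/2$. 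The lemma's $\bar t^\mu$ is thus a (convenient, slightly conservative) sufficient bound, and your elementary estimate is the natural source of it.
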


Given $\mu\in\R_{>0}$, the concavity property provided by Lemma \ref{lem:concave} implies that the value function $W_t^\mu:\cX_1\rightarrow\ol{\R}$ corresponding to \er{eq:payoff-mu} is well-defined for short horizons $t\in[0,\bar t^\mu)$ by
\begin{align}
	W_t^\mu(\xi)
	& \doteq
	\sup_{w\in\cW_1[0,t]} J_t^\mu(\xi,w)\,,
	\label{eq:value-mu}
\end{align}
for all $\xi\in\cX_1$.
%
%
\rev{%
The optimal control problem defined by \er{eq:value-mu} naturally admits a verification theorem, posed with respect to an attendant Hamilton-Jacobi-Bellman (HJB) partial differential equation (PDE), see also \cite[Theorem 6]{DM1:17} for a special case. 
\begin{theorem}
\label{thm:verify}
Given any $\mu\in\R_{>0}$ and $t\in(0,\bar t^\mu)$, suppose there exists a functional $(s,x)\mapsto W_s(x)\in C([0,t]\times\cX_1;\R)\cap C^1((0,t)\times\cX_1;\R)$ such that
\begin{align}
	& 0 = -\pdtone{W_s}{s}(x) + H(x,\grad_x W_s(x))\,,
	\label{eq:verify-HJB}
	\qquad W_0(x) = \psi(x)\,,
\end{align}
for all $s\in(0,t)$, $x\in\cX_1$, where $\grad_x W_s(x)\in\cX_1$ denotes the Riesz representation of the {\Frechet} derivative of $x\mapsto W_s(x)$, defined with respect to the inner product $\langle \cdot\,, \cdot \rangle_1$ on $\cX_1$, and $H:\cX_1\times\cX_1\rightarrow\R$ is the Hamiltonian
\begin{align}
	H(x,p)
	& \doteq \demi\, \|x\|_1^2 + \demi \, \|\op{I}_\mu^\half\, \Lambda^\half\, p\|_1^2
	\label{eq:verify-H} 
\end{align}
for all $x,p\in\cX_1$. Then $W_t(x) \ge J_t^\mu(x,w)$ for all $w\in\cW_1[0,t]$. Furthermore, if there exists a mild solution $s\mapsto\xi_s^*$ as per \er{eq:dynamics} such that
\begin{align}
	& \xi_s^*
	= \xi + \int_0^s w_\sigma^*\, d\sigma,
	\quad
	w_\sigma^*
	= k_\sigma^{\mu,t}(\xi_\sigma^*)\,,
	\label{eq:w-star}
	\\
	& k_\sigma^{\mu,t}(y)
	\doteq \op{I}_\mu^\half\, \op{E}_\mu\, \grad W_{t-\sigma}^\mu(y), \
	\ \sigma\in[0,s]\,, \ s\in[0,t]\,, \ y\in\cX_1\,,
	\nn\\
	& \op{E}_\mu\doteq\opAsqrt\, \op{I}_\mu^\half\, \opAsqrt\in\bo(\cX_1;\cX), \ \op{I}_\mu^\half\in\bo(\cX;\cX_1),
	\label{eq:op-E}
\end{align}
such that $\xi_s^*\in\cX_1$ for all $s\in[0,t]$, then $W_s(x) = J_s^\mu(x,w^*) = W_s^\mu(x)$ for all $s\in[0,t]$, $x\in\cX_1$.
\end{theorem}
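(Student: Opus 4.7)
The argument follows the classical verification template, adapted to this infinite-dimensional, strictly concave (for short horizons) setting: differentiate $W$ along an arbitrary admissible trajectory, use the HJB equation to convert the partial in $s$ into the Hamiltonian, and then exploit the Legendre-type structure of $H$ to bound the result above by the negative of the running payoff. Integrating over $[0,t]$ will yield the inequality $W_t(x)\ge J_t^\mu(x,w)$, while equality along the asserted trajectory $\xi^*$ will follow from uniqueness of the Hamiltonian maximizer.

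Concretely, fix $x\in\cX_1$ and $w\in\cW_1[0,t]$, set $x_s = x+\int_0^s w_\sigma\,d\sigma$ (which lies in $\cX_1$ throughout since $w$ takes values in $\cX_1$), and define $\phi(s)\doteq W_{t-s}(x_s)$ on $[0,t]$. The continuity assumption gives $\phi(0)=W_t(x)$ and $\phi(t)=W_0(x_t)=\psi(x_t)$. On $(0,t)$, the $C^1$ regularity of $(u,y)\mapsto W_u(y)$ together with absolute continuity of $s\mapsto x_s$ produces, via the chain rule, $\dot\phi(s) = -\pdtone{W_{t-s}}{u}(x_s) + \langle \grad W_{t-s}(x_s),\, w_s\rangle_1$. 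Substituting the HJB equation \er{eq:verify-HJB} and observing that a direct maximization against the positive-definite quadratic $T^\mu$ yields the representation $H(x,p) = V(x) + \sup_{v\in\cX_1}\{\langle p,v\rangle_1 - T^\mu(v)\}$, one obtains $\dot\phi(s) \le T^\mu(w_s) - V(x_s)$ almost everywhere on $(0,t)$. Integrating gives $\psi(x_t)-W_t(x)\le \int_0^t T^\mu(w_s)-V(x_s)\,ds$, which rearranges to $W_t(x)\ge J_t^\mu(x,w)$.

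The inequality $\dot\phi(s)\le T^\mu(w_s)-V(x_s)$ is tight precisely when $w_s$ is the (unique, by strict convexity of $T^\mu$) Hamiltonian maximizer $w^*(p) = \Lambda\,\op{I}_\mu\,p$; using the commutativity of $\op{I}_\mu^\half$ and $\opAsqrt$ under the joint spectral calculus of $\opA$, one has $w^*(p)=\op{I}_\mu^\half\,\op{E}_\mu\,p$, which is exactly $k^{\mu,t}_s(y)$ when $p=\grad W_{t-s}(y)$. Hence the assumed mild solution $\xi^*$ collapses every preceding inequality into equality on each subhorizon $[0,s]$, yielding $W_s(x)=J_s^\mu(x,w^*)$; combined with $W_s(x)\ge J_s^\mu(x,w)$ for all admissible $w$ and the definition of $W_s^\mu$ as the supremum, this forces $W_s(x)=W_s^\mu(x)$. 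The main technical obstacle is justifying the infinite-dimensional chain rule rigorously: I expect to perform the computation on each $[\eps,t-\eps]$, appeal to the $L^2([0,t];\cX_1)$ regularity of $w$ to secure Bochner-integrable curves and measurable $\dot\phi$, and then pass to the limit $\eps\downarrow 0$ using the $C([0,t]\times\cX_1;\R)$ continuity of $W$ to recover the boundary values at $s=0$ and $s=t$.
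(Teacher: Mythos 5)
Your proposal is correct and follows essentially the same verification argument as the paper: differentiate $s\mapsto W_{t-s}(x_s)$ along an arbitrary admissible trajectory, use the HJB equation \er{eq:verify-HJB} to trade the time derivative for the Hamiltonian, exploit the Legendre-type relation $H(x,p)=V(x)+\sup_{v}\{\langle p,v\rangle_1-T^\mu(v)\}$ (which the paper phrases as a completion of squares) to bound $\dot\phi\le T^\mu(w_s)-V(x_s)$, integrate, and then observe that the unique maximizer $w^*=\op{I}_\mu^\half\op{E}_\mu\,p$ (equivalently $\Lambda\,\op{I}_\mu\,p$ by commutativity) collapses the inequalities to equality along $\xi^*$. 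The only cosmetic difference is that the paper writes out the completion-of-squares inequality explicitly in \er{eq:verify-squares} rather than invoking the Fenchel form, and it is somewhat less explicit than you are about the chain-rule regularity and the subhorizon repetition needed for arbitrary $s\in[0,t]$.
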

\begin{proof}
Fix $\mu\in\R_{>0}$, $t\in(0,\bar t^\mu)$. Let $(s,x)\mapsto W_s(x)\in C([0,t]\times\cX_1;\R)\cap C^1((0,t)\times\cX_1;\R)$ satisfy \er{eq:verify-HJB} as per the theorem statement. Fix any $x\in\cX_1$, $\ol{w}\in\cW_1[0,t]$, and let $s\mapsto\bar\xi_s\in C([0,t];\cX_1)$ denote the corresponding mild solution \er{eq:dynamics} with $\bar\xi_0 = x$ and $w = \ol{w}$. Define $\ol{p}_s \doteq \grad_x W_{t-s}(\bar\xi_s)$ and note that $\ol{p}_s\in\cX_1$ for all $s\in[0,t]$. Fix $s\in[0,t]$. Recalling \er{eq:T-mu}, observe by completion of squares that
\begin{align}
	\langle \ol{p}_s, \, w \rangle_1 - T^\mu(w)
	& = \langle \ol{p}_s, \, w \rangle_1 - \demi\, \| (\Lambda\, \op{I}_\mu)^{-\half}\, w \|_1^2
	\nn\\
	& = \demi\, \| (\Lambda\, \op{I}_\mu)^{\half}\, \ol{p}_s \|_1^2 - \demi\, \| (\Lambda\, \op{I}_\mu)^{-\half}\, [ w - \op{I}_\mu^\half\, \op{E}_\mu\, \ol{p}_s ] \|_1^2
	\nn\\
	& \le \demi\, \| \op{I}_\mu^{\half}\, \Lambda^\half\, \ol{p}_s \|_1^2
	\label{eq:verify-squares}
\end{align}
for all $w\in\cX_1$. Consequently, applying the chain rule and \er{eq:verify-HJB}, \er{eq:verify-H},
\begin{align}
	& \ts{\ddtone{}{s}} [ W_{t-s}(\bar\xi_s) ]
	= -\ts{\pdtone{}{s}} W_{t-s}(\bar\xi_s) + \langle \grad_x W_{t-s}(\bar\xi_s), \, \ol{w}_s \rangle_1
	\nn\\
	& =  \left[ -\ts{\pdtone{}{s}} W_{t-s}(\bar\xi_s) + H(\bar\xi_s,\grad_x W_{t-s}(\bar\xi_s) \right]
	\nn\\
	& \hspace{20mm}
		+ \langle \grad_x W_{t-s}(\bar\xi_s), \, \ol{w}_s \rangle_1
		- \demi\, \|\bar\xi_s\|_1^2 
		- \demi\, \|\op{I}_\mu^\half\, \Lambda^\half\, \grad_x W_{t-s}(\bar\xi_s) \|_1^2
	\nn\\
	& = \langle \grad_x W_{t-s}(\bar\xi_s), \, \ol{w}_s \rangle_1 - \demi\, \|\bar\xi_s\|_1^2 
		- \demi\, \|\op{I}_\mu^\half\, \Lambda^\half\, \ol{p}_s \|_1^2
	\nn\\
	& \le T^\mu(\ol{w}_s) - \demi\, \|\bar\xi_s\|_1^2
	= T^\mu(\ol{w}_s) -V(\bar\xi_s)
	\,,
	\nn
\end{align}
in which the final inequality follows by application of \er{eq:verify-squares} with $w = \ol{w}_s$. Integrating with respect to $s\in[0,t]$ and recalling the initial condition in \er{eq:verify-HJB} subsequently yields
\begin{align}
	\psi(\bar\xi_t) - W_t(x)
	& = 
	\int_0^t \ts{\ddtone{}{s}} [ W_{t-s}(\bar\xi_s) ] \, ds \le 
	\int_0^t T^\mu(\ol{w}_s) -V(\bar\xi_s) \, ds
	\nn\\
	& \Longrightarrow \quad 
	W_t(x)
	\ge \int_0^t V(\bar\xi_s) - T^\mu(\ol{w}_s) \, ds + \psi(\bar\xi_t) = J_t^\mu(x,\ol{w})\,.
	\label{eq:verify-dissipation}
\end{align}
As $x\in\cX_1$ and $\ol{w}\in\cW[0,t]$ are arbitrary, the first assertion follows. Moreover, if $w^*$ exists as per \er{eq:w-star}, selecting $\ol{w}_s \doteq w_s^*$ yields equality in \er{eq:verify-squares}, \er{eq:verify-dissipation}, thereby yielding the second assertion.
\end{proof}
}

\rev{Verification Theorem \ref{thm:verify} is particularly useful in establishing an idempotent convolution representation for the value function \er{eq:value-mu}.}


\subsection{Idempotent convolution representation for \er{eq:value-mu}}
As illustrated in \cite{MD1:15,DMZ1:15,DM1:17}, the value function of an optimal control problem can be expressed as an idempotent convolution of an element of the attendant idempotent fundamental solution semigroup with the terminal payoff of interest. In the specific case of optimal control problem \er{eq:value-mu} for $\mu\in\R_{>0}$, this yields the value function representation
\begin{align}
	W_t^\mu(\xi)
	& = \sup_{\zeta\in\cX_1} \left\{ G_t^\mu(\xi,\zeta) + \psi(\zeta) \right\},
	\label{eq:idem-rep}
\end{align}
for all $t\in(0,\bar t^\mu)$, $\xi\in\cX_1$, in which $G_t^\mu:\cX_1\times\cX_1\rightarrow\ol{\R}$ is the bivariate \rev{idempotent convolution kernel associated with the {\em max-plus primal space fundamental solution semigroup} corresponding to the optimal control problem \er{eq:value-mu}, see for example \cite[Theorem 2]{DM1:17} or \cite[Theorem 5]{D1:19}. Given any $t\in(0,\bar t^\mu)$, this kernel is defined via an optimal TPBVP by
\begin{align}
	G_t^\mu(\xi,\zeta)
	& \doteq \sup_{w\in\cW[0,t]} \left\{ \left. \int_0^t V(x_s) - T^\mu(w_s)\, ds \, \right|  x_0 = \xi, \,  x_t = \zeta \right\}
	\label{eq:G-mu-def}
\end{align}
for all $\xi, \zeta\in\cX_1$. As anticipated by the special case described by \cite[Theorem 11]{DM1:17}, this kernel also has a quadratic representation.
}
\begin{theorem}
\label{thm:G-mu-representation}
Given any $\mu\in\R_{>0}$ and $t\in(0,\bar t^\mu)$, the idempotent convolution kernel $G_t^\mu:\cX_1\times\cX_1\rightarrow\ol{\R}$ of \er{eq:idem-rep}, \er{eq:G-mu-def} has the quadratic representation
\begin{align}
	G_t^\mu(\xi,\zeta)
	& = \demi \left\langle \left( \ba{c}
				\xi \\ \zeta
			\ea \right)\!, \left( \ba{cc}
				\op{P}_t^\mu & \op{Q}_t^\mu
				\\
				\op{Q}_t^\mu & \op{P}_t^\mu
			\ea \right)  \! \left( \ba{c}
				\xi \\ \zeta
			\ea \right)
		\right\rangle_\sharp\!\!,
	\label{eq:G-mu}
\end{align}
for all $\xi, \zeta\in\cX_1$, %
in which $\langle (x,z), (\xi,\zeta) \rangle_\sharp \doteq \langle x,\xi \rangle_1 + \langle z,\zeta\rangle_1$ for all $(x,z), (\xi,\zeta)\in\cX_1\times\cX_1$, and $\op{P}_t^\mu$, $\op{Q}_t^\mu\in\bo(\cX_1)$ are self-adjoint operators. Moreover, these operators also have the spectral form \er{eq:spectral-opA}, with
\begin{align}
	\begin{aligned}
	\op{P}_t^\mu\, \xi
	& \doteq \sum_{n=1}^\infty [p_t^\mu]_n\, \langle \xi,\, \tilde\varphi_n\rangle_1 \, \tilde\varphi_n\,,
	&
	\op{Q}_t^\mu\, \xi
	& \doteq \sum_{n=1}^\infty [q_t^\mu]_n\, \langle \xi,\, \tilde\varphi_n\rangle_1 \, \tilde\varphi_n\,,
	\quad \xi\in\cX_1,
	\end{aligned}
	\label{eq:op-PQR}
\end{align}
in which the respective eigenvalues $[p_t^\mu]_n$, $[q_t^\mu]_n$ are defined by
\begin{align}
	& \hspace{-2mm}
	\lbrack p_t^\mu \rbrack_n
	\doteq \frac{-1}{\omega_n^\mu \, \tan(\omega_n^\mu\, t)},
	\quad
	[q_t^\mu]_n
	\doteq \frac{1}{\omega_n^\mu\, \sin(\omega_n^\mu\, t)},
	\label{eq:eig-pqr}
\end{align}
for all $n\in\N$. 
\end{theorem}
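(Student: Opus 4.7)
The plan is to exploit the common spectral structure of $\opA$ and $\op{I}_\mu$ to decouple the infinite-dimensional TPBVP \er{eq:G-mu-def} into a countable family of scalar quadratic TPBVPs, solve each in closed form, and reassemble. The first step is to set $\xi_n\doteq\langle\xi,\tilde\varphi_n\rangle_1$, $\zeta_n\doteq\langle\zeta,\tilde\varphi_n\rangle_1$, $x_n(s)\doteq\langle x_s,\tilde\varphi_n\rangle_1$, and $w_n(s)\doteq\langle w_s,\tilde\varphi_n\rangle_1$, and invoke Parseval together with the diagonalization $\op{I}_\mu^{-1}\opA^{-1}\tilde\varphi_n=(\lambda_n^\mu)^{-1}\tilde\varphi_n$. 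This casts the running cost as $\sum_{n\in\N}[\tfrac12 x_n(s)^2 - \tfrac{1}{2\lambda_n^\mu}w_n(s)^2]$, the dynamics \er{eq:dynamics} as $\dot x_n=w_n$, and the endpoint constraints as $x_n(0)=\xi_n$, $x_n(t)=\zeta_n$ for each $n\in\N$.

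The second step is, for each mode $n$, to solve the resulting scalar LQ TPBVP whose Euler--Lagrange equation is $\ddot x_n=-(\omega_n^\mu)^2 x_n$. The hypothesis $t\in(0,\bar t^\mu)$, combined with $\omega_n^\mu\in[\omega_1^\mu,1/\mu)$, yields $\omega_n^\mu t\in(0,\sqrt 2)\subset(0,\pi)$, so $\sin(\omega_n^\mu t)\ne 0$ and a unique sinusoidal extremal $x_n^*$ interpolating $\xi_n$ and $\zeta_n$ exists; strict concavity of the scalar mode-$n$ cost in $w_n$ identifies this extremal as the supremizer. Substituting $x_n^*$ and $w_n^*=\dot x_n^*$ into the scalar cost and simplifying via $\sin(2\theta)=2\sin\theta\cos\theta$ and $1-\cos(2\theta)=2\sin^2\theta$ reduces the mode-$n$ optimal value to $\tfrac12[p_t^\mu]_n(\xi_n^2+\zeta_n^2)+[q_t^\mu]_n\xi_n\zeta_n$ with the coefficients of \er{eq:eig-pqr}.

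The third step is to sum over $n\in\N$, recovering the quadratic form \er{eq:G-mu} with $\op{P}_t^\mu, \op{Q}_t^\mu$ defined by \er{eq:op-PQR}. Self-adjointness on $\cX_1$ is immediate from the spectral representation, and boundedness $\op{P}_t^\mu, \op{Q}_t^\mu\in\bo(\cX_1)$ reduces to uniform boundedness in $n$ of $([p_t^\mu]_n)$ and $([q_t^\mu]_n)$. The latter follows from $\omega_1^\mu t\le\omega_n^\mu t<t/\mu<\sqrt 2$, which confines the arguments to a compact subinterval of $(0,\pi/2)$ on which $\omega\mapsto\omega\tan\omega$ and $\omega\mapsto\omega\sin\omega$ are bounded away from $0$, together with the uniform ceiling $\omega_n^\mu<1/\mu$.

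I expect the main obstacle to lie in the rigorous justification of the spectral decoupling in step one. Admissible controls $w\in\cW[0,t]$ take values in $\cX$ rather than $\cX_1$, while both the endpoint data and $T^\mu$ live naturally on $\cX_1$; finiteness of $\int_0^t T^\mu(w_s)\,ds$ forces $w_s\in\cX_1$ a.e.\ with $\int_0^t\|w_s\|_1^2\,ds<\infty$, so each coefficient $w_n(\cdot)\in L^2([0,t])$ is well-defined and a Fubini-type interchange of temporal integration with the spectral sum applies. Combined with the modewise separability of both cost and endpoint constraints, this identifies the overall supremum with the sum of the scalar suprema and completes the reduction.
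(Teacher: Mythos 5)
Your proof is correct, and it takes a genuinely different route from the paper's. You diagonalize the quadratic TPBVP~\er{eq:G-mu-def} modewise using the common spectral basis $\{\tilde\varphi_n\}$, solve each scalar fixed-endpoint LQ problem by the Euler--Lagrange equation $\ddot x_n=-(\omega_n^\mu)^2 x_n$, and reassemble via Parseval. The key computation (integration by parts reduces the mode-$n$ optimal cost to $-\tfrac{1}{2\omega^2}[\dot x_n^*(t)\zeta_n-\dot x_n^*(0)\xi_n]$, which simplifies to $\tfrac12[p_t^\mu]_n(\xi_n^2+\zeta_n^2)+[q_t^\mu]_n\xi_n\zeta_n$) checks out, as does the uniform boundedness of the eigenvalue sequences from $\omega_n^\mu t\in(\omega_1^\mu t,\,t/\mu]\subset(0,\pi/2)$. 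You also correctly flag and close the technical gap that the constrained supremum over $\cW_1[0,t]$ separates into a sum of modewise suprema.

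The paper instead routes through semiconvex duality: it introduces an auxiliary value function $S_t^\mu(\xi,\zeta)$ with the \emph{smooth} semiconvex-basis terminal cost $\varphi(\cdot,\zeta)$ of~\er{eq:basis}, derives its quadratic representation via the verification theorem and coupled Riccati ODEs~\er{eq:op-DREs} (Lemma~\ref{lem:value-S-explicit}), proves coercivity of $\op{Z}_t^\mu-\op{M}$ (Lemma~\ref{lem:op-Z-M-coercive}), identifies $G_t^\mu(\xi,\cdot)=\op{D}_\varphi S_t^\mu(\xi,\cdot)$ (Lemma~\ref{lem:kernel-G-idem}), and finally evaluates the dual infimum by Schur complement, recovering~\er{eq:eig-pqr} through sum-of-angle identities involving $\theta_n^\mu$. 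What the paper's detour buys is that the verification theorem (Theorem~\ref{thm:verify}) applies rigorously to the \emph{regularized} problem with $C^1$ value function $S_t^\mu$, whereas the raw kernel problem~\er{eq:G-mu-def} has a singular terminal cost $\delta^-$ outside the verification theorem's hypotheses; duality then transfers the representation to $G_t^\mu$ cleanly. The intermediate objects ($S_t^\mu$, $\op{X}_t^\mu,\op{Y}_t^\mu,\op{Z}_t^\mu$, coercivity) are also structurally aligned with the dynamic-programming and max-plus machinery used elsewhere in the paper. Your direct spectral decoupling is more elementary and gives the answer in one pass, at the cost of having to argue separability of $\sup_w\sum_n$ into $\sum_n\sup_{w_n}$ and finiteness/attainment in each scalar TPBVP by hand (which the strict second-variation bound $\omega_n^\mu t<\sqrt 2<\pi$ does deliver). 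Both approaches ultimately lean on the spectral representation~\er{eq:spectral-opA}; yours just uses it one step earlier.
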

The proof of a special case \cite[Theorem 11]{DM1:17} of Theorem \ref{thm:G-mu-representation} employs a homotopy argument to verify the corresponding quadratic representation analogous to \er{eq:G-mu}. Here, motivated by \cite[Theorem 2]{DZ1:17} and \cite[Theorem 5]{D1:19}, an alternative approach to the proof of Theorem \ref{thm:G-mu-representation} as stated is developed by exploiting semiconvex duality. 
This development commences with the definition of a parameterised terminal cost $\varphi:\cX_1\times\cX_1\rightarrow\R$ by
\begin{align}
	\varphi(x,z)
	& \doteq \demi \, \langle x-z, \, \op{M} \, (x-z) \rangle_1
	\label{eq:basis}
\end{align}
for all $x,z\in\cX_1$, in which $\op{M}\in\bo(\cX_1)$ is a negative self-adjoint operator of spectral form \er{eq:spectral-opA}, with
\begin{align}
	\op{M}\, \xi
	& \doteq \sum_{n=1}^\infty m_n\, \langle \xi,\, \tilde\varphi_n \rangle_1\, \tilde\varphi_n,
	\
	m_n\in[-\ol{m}, \, -\ul{m}],
	\ 0 < \frac{1}{\omega_1^1} \tan \sqrt{2} < \ul{m} \le \ol{m} < \infty, 
	\label{eq:op-M}
\end{align}
for all $\xi\in\cX_1$. Observe by \er{eq:omega-lambda} that
\begin{align}
	\begin{aligned}
	\omega_1^1
	& \le \omega_1^\mu \le \omega_n^\mu \le \omega_\infty^\mu \doteq \frac{1}{\mu}\,, 
	&\qquad& 
	0 < \frac{\mu}{\ol{m}} < 
	\frac{-1}{\omega_n^\mu\, m_n} 
	< \frac{1}{\omega_1^1\, \ul{m}} < \frac{1}{\tan\sqrt{2}}\,,
	\\
	\theta_n^\mu
	& \doteq \tan^{-1}\left( \frac{-1}{ \omega_n^\mu\, m_n} \right),
	&&
	0 < \theta_n^\mu < \tan^{-1} \left( \frac{1}{\tan\sqrt{2}} \right) = \frac{\pi}{2} - \sqrt{2}\,.
	\end{aligned}
	\label{eq:omega-mu-theta}
\end{align}
for all $\mu\in(0,1]$, $n\in\N$. Note in particular that
\begin{align}
	\mu\in(0,1], \ t\in(0,\bar t^\mu)
	& \quad \Longrightarrow \quad
	\omega_n^\mu\, t + \theta_n^\mu
	\in\left( 0, \, \frac{\pi}{2} \right)
	\quad \forall \ n\in\N.
	\label{eq:angle-bounds}
\end{align}
Given $\mu\in(0,1]$, $t\in(0,\bar t^\mu)$, it is useful to define an auxiliary optimal control problem with value function $S_t^\mu:\cX_1\times\cX_1\rightarrow\ol{\R}$ by
\begin{align}
	S_t^\mu(\xi,\zeta)
	& \doteq
	\sup_{w\in\cW_1[0,t]} J_t^\mu[\varphi(\cdot,\zeta)](\xi,w)
	\label{eq:value-S}
\end{align}
for all $\xi,\zeta\in\cX_1$, in which $J_t^\mu$ and $\varphi$ are as per \er{eq:payoff-mu} and \er{eq:basis}. 

\begin{lemma}
\label{lem:value-S-explicit}
Given $\mu\in(0,1]$, $t\in(0,\bar t^\mu)$, the value function $S_t^\mu:\cX_1\times\cX_1\rightarrow\R$ of \er{eq:value-S} has the explicit quadratic representation 
\begin{align}
	S_t^\mu(\xi,\zeta)
	& \doteq \demi\, \langle \xi, \, \op{X}_t^{\mu}\, \xi \rangle_1 + \langle \xi, \, \op{Y}_t^\mu\, \zeta \rangle_1 
			+ \demi\, \langle \zeta, \, \op{Z}_t^\mu\, \zeta \rangle_1
	\label{eq:value-S-explicit}
\end{align}
for all $\xi,\zeta\in\cX_1$, in which $\op{X}_t^\mu, \op{Y}_t^\mu, \op{Z}_t^\mu\in\bo(\cX_1)$ are bounded linear operators of the spectral form \er{eq:spectral-opA}, with respective eigenvalues given by
\begin{align}
	\begin{aligned}
	\lbrack x_t^\mu \rbrack_n 
	& \doteq -\frac{1}{\omega_n^\mu}\, \cot \left( \omega_n^\mu\, t + \theta_n^\mu \right),
	\\
	[y_t^\mu]_n
	& \doteq + \frac{1}{\omega_n^\mu} \, \cos\theta_n^\mu\, \csc(\omega_n^\mu\, t + \theta_n^\mu),
	\\
	[z_t^\mu]_n
	& \doteq -\frac{1}{\omega_n^\mu} \, \cos^2 \theta_n^\mu \left[ \, \tan\theta_n^\mu + \cot(\omega_n^\mu \, t + \theta_n^\mu) \, \right],
	\end{aligned}
	\label{eq:eig-XYZ}
\end{align}
for all $n\in\N$, and satisfying $\opdot{X}_t^\mu, \opdot{Y}_t^\mu, \opdot{Z}_t^\mu\in\bo(\cX_1)$.
\end{lemma}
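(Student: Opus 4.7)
The plan is to apply Verification Theorem \ref{thm:verify} with terminal payoff $\psi = \varphi(\cdot,\zeta)$ to a quadratic ansatz $\ol{S}_s^\mu(\xi,\zeta) \doteq \demi\langle \xi,\op{X}_s^\mu\xi\rangle_1 + \langle \xi,\op{Y}_s^\mu\zeta\rangle_1 + \demi\langle \zeta,\op{Z}_s^\mu\zeta\rangle_1$, in which $\op{X}_s^\mu,\op{Y}_s^\mu,\op{Z}_s^\mu\in\bo(\cX_1)$ are self-adjoint and share the spectral form \er{eq:spectral-opA}. Because $\Lambda$, $\op{I}_\mu$, $\op{M}$, and the quadratic form $V(x)=\demi\|x\|_1^2$ are all simultaneously diagonalised by the basis $\{\tilde\varphi_n\}_{n\in\N}$ of \er{eq:spectral-opA}, this ansatz reduces the HJB problem \er{eq:verify-HJB}-\er{eq:verify-H} to an uncoupled family of scalar problems indexed by $n\in\N$. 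Writing $\xi_n\doteq\langle \xi,\tilde\varphi_n\rangle_1$, $\zeta_n\doteq\langle \zeta,\tilde\varphi_n\rangle_1$, and $x_s=[x_s^\mu]_n$ etc., Remark \ref{rem:op-I-eig} gives $\|\op{I}_\mu^\half\Lambda^\half p\|_1^2 = \sum_n(\omega_n^\mu)^2 p_n^2$, so matching coefficients of $\xi_n^2$, $\xi_n\zeta_n$, and $\zeta_n^2$ in \er{eq:verify-HJB} produces, for each $n$, the triangular Riccati system
\begin{equation*}
\dot x_s = 1 + (\omega_n^\mu)^2 x_s^2,\quad
\dot y_s = (\omega_n^\mu)^2 x_s y_s,\quad
\dot z_s = (\omega_n^\mu)^2 y_s^2,
\end{equation*}
subject to $(x_0,y_0,z_0) = (m_n,-m_n,m_n)$ read off from $\ol{S}_0^\mu=\varphi$.

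I would integrate this system triangularly. Separation of variables in the first equation yields $x_s = (\omega_n^\mu)^{-1}\tan(\omega_n^\mu s + \tan^{-1}(\omega_n^\mu m_n))$; the closed-form expression $\int x_s\,ds = -(\omega_n^\mu)^{-2}\log|\cos(\omega_n^\mu s+\tan^{-1}(\omega_n^\mu m_n))|+C$ then gives $y_s$ by direct exponentiation of the linear equation; and a final quadrature of $(\omega_n^\mu y_s)^2$ delivers $z_s$. The trigonometric identity $-\cot\alpha = \tan(\alpha-\pi/2)$, combined with the observation that $\pi/2 + \tan^{-1}(\omega_n^\mu m_n) = \tan^{-1}(-1/(\omega_n^\mu m_n)) = \theta_n^\mu$ on the relevant range (using $m_n<0$), then converts these closed forms into exactly the claimed expressions \er{eq:eig-XYZ}.

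The analytic content of the proof is to promote these modewise formulas to genuine operators in $\bo(\cX_1)$ and to verify the regularity required by Theorem \ref{thm:verify}. Using \er{eq:omega-mu-theta} we have $\omega_n^\mu\in[\omega_1^1, 1/\mu]$ and $\theta_n^\mu\in(0,\pi/2-\sqrt{2})$ uniformly in $n$, while \er{eq:angle-bounds} guarantees $\omega_n^\mu s + \theta_n^\mu \in[\omega_1^1 s,\pi/2)$ uniformly in $n$ for each $s\in(0,t)$. Thus $\cot(\omega_n^\mu s+\theta_n^\mu)$, $\csc(\omega_n^\mu s+\theta_n^\mu)$, and $\tan\theta_n^\mu$ are uniformly bounded, as is $(\omega_n^\mu)^{-1}$; so \er{eq:eig-XYZ} defines bounded eigenvalue sequences, and the associated spectral sums are self-adjoint elements of $\bo(\cX_1)$. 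Boundedness of $\opdot{X}_s^\mu,\opdot{Y}_s^\mu,\opdot{Z}_s^\mu$ then follows from the Riccati right-hand sides, which are quadratic in the already-bounded operators, and joint continuity together with $C^1$ regularity in $s\in(0,t)$ follow by dominated convergence.

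It remains to exhibit the optimal $w^*$ required by Theorem \ref{thm:verify}. The feedback \er{eq:w-star} is the bounded, time-varying, affine law $w_\sigma^* = \op{I}_\mu^\half\op{E}_\mu(\op{X}_{t-\sigma}^\mu\xi_\sigma^* + \op{Y}_{t-\sigma}^\mu\zeta)$; substituting into \er{eq:dynamics} gives an affine nonautonomous linear evolution equation in $\cX_1$ with bounded, strongly continuous generator, which admits a unique mild solution $\xi^*\in C([0,t];\cX_1)$ by standard evolution-system theory. Theorem \ref{thm:verify} then yields $S_t^\mu(\xi,\zeta) = \ol{S}_t^\mu(\xi,\zeta)$ and the lemma follows. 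The principal technical obstacle is the uniform-in-$n$ control of the trigonometric eigenvalues in \er{eq:eig-XYZ}: this rests entirely on the angle bound \er{eq:angle-bounds}, which keeps $\omega_n^\mu s + \theta_n^\mu$ bounded away from both $0$ and $\pi/2$ and is precisely why the short-horizon restriction $t<\bar t^\mu$ and the lower bound $\ul m > \omega_1^{1\,-1}\tan\sqrt{2}$ imposed in \er{eq:op-M} appear in the hypotheses.
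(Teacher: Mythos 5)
Your proof proposal is correct and follows essentially the same route as the paper's: reduce the HJB PDE from Theorem \ref{thm:verify} to an $n$-indexed family of scalar Riccati equations via the common eigenbasis $\{\tilde\varphi_n\}$, establish the eigenvalue formulas \er{eq:eig-XYZ} with initial data $(m_n,-m_n,m_n)$, use \er{eq:omega-mu-theta}--\er{eq:angle-bounds} for uniform-in-$n$ boundedness of the eigenvalues and their time derivatives (hence $\op{X}_t^\mu,\op{Y}_t^\mu,\op{Z}_t^\mu,\opdot{X}_t^\mu,\opdot{Y}_t^\mu,\opdot{Z}_t^\mu\in\bo(\cX_1)$), and close via the verification theorem after supplying the mild optimal trajectory. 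The only cosmetic difference is that you propose to \emph{solve} the triangular Riccati system forward by separation of variables, direct exponentiation, and quadrature, whereas the paper instead \emph{posits} the closed forms \er{eq:eig-XYZ} and verifies by differentiation that they satisfy the same Riccati ODEs (and their operator lift \er{eq:op-DREs}); your identity $\theta_n^\mu = \pi/2 + \tan^{-1}(\omega_n^\mu m_n)$ for $m_n<0$ is the correct bridge between the two forms, so the calculations coincide.
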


\begin{proof}
Fix $\mu\in(0,1]$, $t\in(0,\bar t^\mu)$. Let $\op{X}_t^\mu, \op{Y}_t^\mu, \op{Z}_t^\mu$ be linear operators of the spectral form \er{eq:spectral-opA} as per the lemma statement, and observe that their respective eigenvalues satisfy
\begin{align}
	%
	\lbrack x_t^\mu \rbrack_n 
	& 
	\doteq -\frac{1}{\omega_n^\mu}\, \cot \left( \omega_n^\mu\, t + \theta_n^\mu \right),
	\nn\\
	& = m_n + \frac{1}{\omega_n^\mu}\, [ \, -\omega_n^\mu\, m_n - \cot(\omega_n^\mu\, t + \theta_n^\mu) \, ]
	\nn\\
	& = m_n + \frac{1}{\omega_n^\mu}\, [ \, \cot\theta_n^\mu - \cot(\omega_n^\mu\, t + \theta_n^\mu) \, ],
	\nn\\
	[y_t^\mu]_n
	& 
	\doteq + \frac{1}{\omega_n^\mu} \, \cos\theta_n^\mu\, \csc(\omega_n^\mu\, t + \theta_n^\mu)
	\nn\\
	& = -m_n + \frac{1}{\omega_n^\mu} \, \cos\theta_n^\mu\, [ \, \omega_n^\mu\, m_n\, \sec\theta_n^\mu + \csc(\omega_n^\mu\, t + \theta_n^\mu) \, ]
	\nn\\
	& = -m_n + \frac{1}{\omega_n^\mu} \, \cos\theta_n^\mu\, [ \, -\cot \theta_n^\mu\,  \sec\theta_n^\mu + \csc(\omega_n^\mu\, t + \theta_n^\mu) \, ]
	\nn\\
	& =  -m_n - \frac{1}{\omega_n^\mu} \, \cos\theta_n^\mu\, [ \, \csc\theta_n^\mu - \csc(\omega_n^\mu\, t + \theta_n^\mu) \, ]
	\label{eq:eig-XYZ-alt}
	\\
	[z_t^\mu]_n
	& 
	\doteq -\frac{1}{\omega_n^\mu} \, \cos^2 \theta_n^\mu \left[ \, \tan\theta_n^\mu + \cot(\omega_n^\mu \, t + \theta_n^\mu) \, \right]
	\nn\\
	& = m_n + \frac{1}{\omega_n^\mu} \, \cos^2 \theta_n^\mu \left[ \, 
			-\omega_n^\mu\, m_n\, \sec^2 \theta_n^\mu - \tan\theta_n^\mu - \cot(\omega_n^\mu \, t + \theta_n^\mu) \, \right]
	\nn\\
	& = m_n + \frac{1}{\omega_n^\mu} \, \cos^2 \theta_n^\mu \left[ \, 
			\cot\theta_n^\mu \, (\sec^2 \theta_n^\mu - \tan^2\theta_n^\mu) - \cot(\omega_n^\mu \, t + \theta_n^\mu) \, \right]
	\nn\\
	& 
	= m_n + \frac{1}{\omega_n^\mu} \, \cos^2 \theta_n^\mu \left[ \, \cot\theta_n^\mu - \cot(\omega_n^\mu\,t + \theta_n^\mu) \, \right],
	\nn
\end{align}
for all $n\in\N$. Bounds \er{eq:omega-mu-theta}, \er{eq:angle-bounds} imply that the corresponding eigenvalue sequences are bounded. Moreover, elements of these eigenvalue sequences are differentiable with respect to $t$, and satisfy 
\begin{align}
	&
	\begin{aligned}
	\lbrack \dot x_t^\mu \rbrack_n 
	& 
	= 1 + \lambda_n^\mu \, [x_t^\mu]_n^2,
	&
	[ \ddot x_t^\mu ]_n
	& = 2\, \lambda_n^\mu\, [x_t^\mu]_n \, [\dot x_t^\mu]_n,
	&
	[ x_0^\mu ]_n
	& = m_n,
	\\
	[ \dot y_t^\mu]_n 
	& = \lambda_n^\mu\, [x_t^\mu]_n \, [y_t^\mu]_n,
	&
	[ \ddot y_t^\mu ]_n
	& = \lambda_n^\mu\, ( [\dot x_t^\mu]\, [y_t^\mu]_n + [x_t^\mu]\, [\dot y_t^\mu]_n),
	&
	[ y_0^\mu ]_n & = -m_n,
	\\
	[\dot z_t^\mu]_n
	& = \lambda_n^\mu\, [ y_t^\mu]_n^2,
	&
	[\ddot z_t^\mu]_n
	& = 2\, \lambda_n^\mu\, [y_t^\mu]_n \, [\dot y_t^\mu]_n,
	& 
	[z_0^\mu]_n 
	& = m_n,
	\end{aligned}
	\label{eq:eig-derivatives}
\end{align}
so that the sequences of corresponding derivatives are also bounded. Hence, the linear operators $\op{X}_t^\mu, \op{Y}_t^\mu, \op{Z}_t^\mu$ are bounded and {\Frechet} differentiable with bounded derivatives $\opdot{X}_t^\mu, \opdot{Y}_t^\mu, \opdot{Z}_t^\mu$. That is,
$\op{X}_t^\mu, \op{Y}_t^\mu, \op{Z}_t^\mu\in\bo(\cX_1)$ and $\opdot{X}_t^\mu, \opdot{Y}_t^\mu, \opdot{Z}_t^\mu\in\bo(\cX_1)$, as asserted. 
By inspection of \er{eq:spectral-opA}, \er{eq:eig-derivatives}, and Remark \ref{rem:op-I-eig}, note further that these operators satisfy the respective Cauchy problems
\begin{align}
	& 
	\begin{aligned}
	\opdot{X}_t^\mu
	& = \op{I} + \op{X}_t^\mu\, \Lambda^\half\, \op{I}_\mu\, \Lambda^\half\, \op{X}_t^\mu,
	&& \op{X}_0^\mu = +\op{M},
	\\
	\opdot{Y}_t^\mu
	& = \op{X}_t^\mu\, \Lambda^\half\, \op{I}_\mu\, \Lambda^\half\, \op{Y}_t^\mu,
	&& \op{Y}_0^\mu = -\op{M},
	\\
	\opdot{Z}_t^\mu
	& = \op{Y}_t^\mu\, \Lambda^\half\, \op{I}_\mu\, \Lambda^\half\, \op{Y}_t^\mu,
	&& \op{Z}_0^\mu = +\op{M},
	\end{aligned}
	\label{eq:op-DREs}
\end{align}
in which $\op{I}\in\bo(\cX_1)$ denotes the identity.

Define $\wh{S}_t^\mu:\cX_1\times\cX_1\rightarrow\R$ as per the quadratic form in the lemma statement, i.e.
\begin{align}
	\wh{S}_t^\mu(\xi,\zeta)
	& \doteq \demi\, \langle \xi, \, \op{X}_t^{\mu}\, \xi \rangle_1 + \langle \xi, \, \op{Y}_t^\mu\, \zeta \rangle_1 
			+ \demi\, \langle \zeta, \, \op{Z}_t^\mu\, \zeta \rangle_1
	\label{eq:value-S-hat-explicit}
\end{align}
for all $\xi,\zeta\in\cX_1$. Observe that $t\mapsto S_t^\mu(\xi,\zeta)$ is {\Frechet} differentiable, as is $\xi\mapsto S_t^\mu(\xi,\zeta)$ by inspection, with  respective derivatives given via their Riesz representation by
\begin{align}
	\begin{aligned}
	\pdtone{\wh{S}_t^\mu}{t}(\xi,\zeta)
	& = \demi\, \langle \xi, \, \opdot{X}_t^{\mu}\, \xi \rangle_1 + \langle \xi, \, \opdot{Y}_t^\mu\, \zeta \rangle_1 
			+ \demi\, \langle \zeta, \, \opdot{Z}_t^\mu\, \zeta \rangle_1,
	\\
	\grad_\xi \wh{S}_t^\mu(\xi,\zeta)
	& = \op{X}_t^\mu\, \xi + \op{Y}_t^\mu\, \zeta,
	\end{aligned}
	\label{eq:value-S-derivatives}
\end{align}
for all $\xi,\zeta\in\cX_1$. 
Consequently, applying \er{eq:op-DREs}, \er{eq:value-S-derivatives} in \er{eq:verify-HJB}, \er{eq:verify-H} yields
\begin{align}
	& -\pdtone{S_t^\mu}{t}(\xi,\zeta) + H(\xi, \grad_\xi S_t^\mu(\xi,\zeta))
	\nn\\
	& = - \demi\, \langle \xi, \, \opdot{X}_t^{\mu}\, \xi \rangle_1 - \langle \xi, \, \opdot{Y}_t^\mu\, \zeta \rangle_1 
			- \demi\, \langle \zeta, \, \opdot{Z}_t^\mu\, \zeta \rangle_1
	\nn\\
	& \hspace{20mm}
	+ \demi\, \|\xi\|_1^2 + \demi\, \langle \op{X}_t^\mu\, \xi + \op{Y}_t^\mu\, \zeta, \, 
			\Lambda^\half\, \op{I}_\mu\, \Lambda^\half\, \left( \op{X}_t^\mu\, \xi + \op{Y}_t^\mu\, \zeta \right) \rangle_1
	\nn\\
	& = \demi\, \langle \xi, (-\opdot{X}_t^\mu + \op{I} +  \op{X}_t^\mu\, \Lambda^\half\, \op{I}_\mu\, \Lambda^\half\, \op{X}_t^\mu) \, \xi \rangle_1
	+ \langle \xi, \, (-\opdot{Y}_t^\mu + \op{X}_t^\mu\, \Lambda^\half\, \op{I}_\mu\, \Lambda^\half\, \op{Y}_t^\mu)\, \zeta \rangle_1
	\nn\\
	& \hspace{20mm}
	+ \demi\, \langle \zeta, \, (-\opdot{Z}_t^\mu + \op{Y}_t^\mu\, \Lambda^\half\, \op{I}_\mu\, \Lambda^\half\, \op{Y}_t^\mu)\, \zeta \rangle_1
	\nn\\
	& = 0
	\nn
\end{align}
in which the final equality follows by \er{eq:op-DREs}. Meanwhile, the initial data in \er{eq:op-DREs} and \er{eq:basis}, \er{eq:value-S-hat-explicit} yield
\begin{align}
	\wh{S}_0^\mu(\xi,\zeta)
	& = \demi\, \langle \xi, \, \op{M} \, \xi \rangle_1 - \langle\xi,\, \op{M}\, \zeta\rangle_1 + \demi\, \langle \zeta,\, \op{M}\, \zeta \rangle_1
	= \varphi(\xi,\zeta).
	\nn
\end{align}
That is, $\wh{S}_t^\mu(\cdot,\zeta):\cX_1\rightarrow\R$, $\zeta\in\cX_1$, satisfies the HJB PDE \er{eq:verify-HJB}, \er{eq:verify-H}. Moreover, mild solution $s\mapsto\xi_s^*$ of \er{eq:w-star} may also be shown to exist, with $\xi_s\in\cX_1$ for all $s\in[0,t]$, using a fixed point argument. The details parallel \cite[Theorem 13]{DM1:17}, and are omitted. Hence, the conditions of verification Theorem \ref{thm:verify} are satisfied, so that $\wh{S}_t^\mu(\cdot,\zeta)$ of \er{eq:value-S-hat-explicit} is the value of an optimal control problem of the form \er{eq:value-mu} with terminal cost $\psi \doteq \varphi(\cdot,\zeta)$. That is, recalling \er{eq:value-S}, 
\begin{align}
	\wh{S}_t^\mu(\xi,\zeta)
	& = 
	\sup_{w\in\cW_1[0,t]} J_t^\mu[\varphi(\cdot,\zeta)](\xi,w)
	= S_t^\mu(\xi,\zeta)
	\label{eq:value-S-equiv}
\end{align}
for all $\xi,\zeta\in\cX_1$ and $t\in(0,\bar t^\mu)$, as required. 
\end{proof}

Coercivity of $\op{Z}_t^\mu - \op{M}$ is useful for the subsequent semiconvex duality argument.

\begin{lemma}
\label{lem:op-Z-M-coercive}
Given $\mu\in(0,1]$, $t\in(0,\bar t^\mu)$, the operator $\op{Z}_t^\mu - \op{M}\in\bo(\cX_1)$ is coercive.
\end{lemma}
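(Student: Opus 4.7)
The plan is to exploit the spectral form of $\op{Z}_t^\mu - \op{M}$ and show that the eigenvalues are bounded below by a positive constant uniformly in $n\in\N$, which yields coercivity on $\cX_1$.

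First, I would read off the eigenvalues of $\op{Z}_t^\mu - \op{M}$ from the alternative expression for $[z_t^\mu]_n$ derived in the proof of Lemma \ref{lem:value-S-explicit}, namely
\begin{align*}
[z_t^\mu]_n - m_n
& = \frac{1}{\omega_n^\mu}\, \cos^2 \theta_n^\mu \left[ \, \cot\theta_n^\mu - \cot(\omega_n^\mu\, t + \theta_n^\mu) \, \right],
\end{align*}
and then apply the identity $\cot\alpha - \cot\beta = \sin(\beta - \alpha)/(\sin\alpha \sin\beta)$ with $\alpha = \theta_n^\mu$, $\beta = \omega_n^\mu t + \theta_n^\mu$ to obtain
\begin{align*}
[z_t^\mu]_n - m_n
& = \frac{\cos^2 \theta_n^\mu \, \sin(\omega_n^\mu\, t)}{\omega_n^\mu\, \sin\theta_n^\mu\, \sin(\omega_n^\mu\, t + \theta_n^\mu)}\,.
\end{align*}
By \er{eq:angle-bounds} each factor is positive, so the eigenvalues are at least positive; the task reduces to a uniform positive lower bound.

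Next I would establish uniform bounds for each factor by chasing the estimates \er{eq:omega-mu-theta}, \er{eq:angle-bounds}. Since $\omega_n^\mu\in[\omega_1^1,\, 1/\mu]$, we have $1/\omega_n^\mu \ge \mu$. Since $\theta_n^\mu = \tan^{-1}(1/(\omega_n^\mu|m_n|))$ with $\omega_n^\mu|m_n|\in[\omega_1^1\ul{m},\, \ol{m}/\mu]$, the angle $\theta_n^\mu$ lies in a compact subinterval of $(0,\pi/2 - \sqrt 2)$ independent of $n$, so both $\cos\theta_n^\mu$ and $\sin\theta_n^\mu$ are uniformly bounded away from zero. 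For the numerator, since $t\in(0,\bar t^\mu)$ gives $\omega_n^\mu t \in (\omega_1^1 t,\, t/\mu)\subset(0,\sqrt 2)\subset(0,\pi/2)$, the map $\omega\mapsto\sin(\omega t)$ is strictly increasing, so $\sin(\omega_n^\mu t) \ge \sin(\omega_1^1 t) > 0$. For the denominator, $\sin(\omega_n^\mu t + \theta_n^\mu)\le 1$ by \er{eq:angle-bounds}. Combining these yields a constant $c = c(\mu,t,\ul{m},\ol{m}) > 0$ such that $[z_t^\mu]_n - m_n \ge c$ for every $n\in\N$.

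Finally, for arbitrary $\xi\in\cX_1$, the spectral representation inherited from \er{eq:spectral-opA} gives
\begin{align*}
\langle \xi,\, (\op{Z}_t^\mu - \op{M})\, \xi \rangle_1
& = \sum_{n=1}^\infty ([z_t^\mu]_n - m_n)\, |\langle \xi,\tilde\varphi_n\rangle_1|^2
\ge c \sum_{n=1}^\infty |\langle \xi,\tilde\varphi_n\rangle_1|^2
= c\, \|\xi\|_1^2\,,
\end{align*}
which is coercivity. I do not expect any serious obstacle: the only delicate point is verifying that $\theta_n^\mu$ stays in a compact subinterval of $(0,\pi/2)$ uniformly in $n$, but this is immediate from the two-sided bound on $\omega_n^\mu |m_n|$ and the monotonicity of $\tan^{-1}$.
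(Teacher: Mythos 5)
Your proof is correct, and it takes a genuinely different route from the paper's. Both start from the spectral identity $\langle\zeta,(\op{Z}_t^\mu-\op{M})\zeta\rangle_1 = \sum_n([z_t^\mu]_n-m_n)|\langle\zeta,\tilde\varphi_n\rangle_1|^2$ and the factorization $[z_t^\mu]_n-m_n=\cos^2\theta_n^\mu\, f_n^\mu(t)$ with $f_n^\mu(t)=\frac{1}{\omega_n^\mu}[\cot\theta_n^\mu-\cot(\omega_n^\mu t+\theta_n^\mu)]$. Where you part ways: the paper exploits $f_n^\mu(0)=0$ and $(f_n^\mu)'(t)=\csc^2(\omega_n^\mu t+\theta_n^\mu)>1$ on $(0,\bar t^\mu)$, so a mean-value argument yields $f_n^\mu(t)\ge t$ and hence the clean, $\mu$-independent lower bound $[z_t^\mu]_n-m_n\ge t\sin^2\sqrt{2}$. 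You instead apply the product formula $\cot\alpha-\cot\beta=\sin(\beta-\alpha)/(\sin\alpha\sin\beta)$ to write $f_n^\mu(t)$ explicitly as a ratio, then bound each factor: $1/\omega_n^\mu\ge\mu$, $\sin(\omega_n^\mu t)\ge\sin(\omega_1^1 t)$ (using monotonicity on $(0,\sqrt 2)\subset(0,\pi/2)$), and $\sin\theta_n^\mu,\sin(\omega_n^\mu t+\theta_n^\mu)\le 1$. This is elementary and avoids the derivative argument, but the resulting constant is $c\ge\mu\sin^2\sqrt{2}\,\sin(\omega_1^1 t)$, which degrades as $\mu\to 0$, whereas the paper's bound does not. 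For the lemma as stated (fixed $\mu>0$, $t>0$) both are adequate; the paper's version is the one you would want if a $\mu$-uniform estimate were needed downstream. One small inefficiency in your write-up: you observe that $\sin\theta_n^\mu$ is bounded away from zero, but for a lower bound on the ratio you only need the trivial $\sin\theta_n^\mu\le 1$; bounding it away from zero points the wrong way and is not used.
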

\begin{proof}
Fix $\mu\in(0,1]$, $t\in(0,\bar t^\mu)$. The asserted boundedness, i.e. $\op{Z}_t^\mu - \op{M}\in\bo(\cX_1)$, is immediate by \er{eq:op-M} and Lemma \ref{lem:value-S-explicit}. Moreover, this operator has the spectral form \er{eq:spectral-opA}, see \er{eq:op-M}, \er{eq:eig-XYZ}, with
\begin{align}
	\langle \zeta,\, (\op{Z}_t^\mu - \op{M})\, \zeta \rangle_1
	& = \sum_{n=1}^\infty ([z_t^\mu]_n - m_n) \, |\langle \zeta,\, \tilde\varphi_n \rangle_1|^2
	\label{eq:pre-coercive}
\end{align}
Recalling the last equality in \er{eq:eig-XYZ-alt},
\begin{align}
	[z_t^\mu]_n - m_n
	& =  \, [\cos^2 \theta_n^\mu] \, f_n^\mu(t),
	\nn
\end{align}
with
\begin{gather}
	\cos^2\theta_n^\mu\ge\cos^2\left(\frac{\pi}{2} - \sqrt{2}\right) = \sin^2\sqrt{2} > 0,
	\nn\\
	f_n^\mu(t) 
	\doteq \frac{1}{\omega_n^\mu} [\cot\theta_n^\mu - \cot(\omega_n^\mu\, t + \theta_n^\mu)],
	\qquad
	f_n^\mu(0)
	= 0,
	\\
	(f_n^\mu)'(t)
	= \csc^2(\omega_n^\mu\, t + \theta_n^\mu) > 1,
	\nn
\end{gather}
for all $n\in\N$. Consequently, $[z_t^\mu]_n - m_n\ge t\, \sin^2 \sqrt{2}$ for all $n\in\N$, so that by \er{eq:pre-coercive},
\begin{align}
	\langle \zeta,\, (\op{Z}_t^\mu - \op{M})\, \zeta \rangle_1
	& \ge \sum_{n=1}^\infty t\, \sin^2 \sqrt{2}\, |\langle \zeta,\, \tilde\varphi_n \rangle_1|^2
	= t\, \sin^2 \sqrt{2}\, \| \zeta\|_1^2,
	\nn
\end{align}
for all $\zeta\in\cX_1$. That is, $\op{Z}_t^\mu - \op{M}$ is coercive, as required.
\end{proof}

In continuing the preparations for the proof of Theorem \ref{thm:G-mu-representation}, some definitions relating to semiconvex duality are required.
In particular, a function $\psi:\cX_1\rightarrow\ol{\R}$ is convex if its epigraph $\{ (x,\alpha)\in\cX_1\times\R \, | \, \psi(x)\le\alpha \}$ is convex \cite{R:74}. It is {\em lower closed} if $\psi = \cl^-\, \psi$, in which $\cl^-$ is the {\em lower closure} defined with respect to the lower semicontinuous envelope $\lsc$ by
\begin{align}
	\cl^- \,\psi(x)
	& \doteq \left\{ \ba{cl}
		\lsc \, \psi(x),
		& \lsc\, \psi(x) > -\infty \text{ for all } x\in\cX_1,
		\\
		-\infty,
		& \text{otherwise},
	\ea \right.
	\nn
\end{align}
for all $x\in\cX_1$. Similarly, $\psi$ is concave if $-\psi$ is convex, and {\em upper closed} if $-\psi$ is lower closed, see \cite[pp.15-17]{R:74}.
%
%
Following \cite{DM1:15,D1:19}, uniformly semiconvex and semiconcave extended real valued function spaces $\cSv$ and $\cSa$ are defined with respect to operator $\op{M}$ of \er{eq:basis},\er{eq:op-M} by
\begin{align}
	\begin{aligned}
	\cSv
	& \doteq \left\{ \psi:\cX_1\rightarrow\ol{\R} \, \left| \, \ba{c} 
										\psi+\demi\, \langle \cdot\,, -\op{M} \, \cdot \rangle_1
										\\
										\text{convex, lower closed}
									\ea \right. \right\},
	\\
	\cSa
	& \doteq \left\{ \psi:\cX_1\rightarrow\ol{\R} \, \left| \, \ba{c} 
										\psi-\demi\, \langle \cdot\,, -\op{M} \, \cdot \rangle_1
										\\
										\text{ concave, upper closed}
									\ea \right. \right\}.
	\end{aligned}
	\label{eq:semiconvex}
\end{align}
Semiconvex duality is a duality between the spaces of \er{eq:semiconvex}, defined via the semiconvex transform. The semiconvex transform is a generalization of the Legendre-Fenchel transform, in which convexity is weakened to semiconvexity by relaxing affine support to quadratic support. The quadratic support functions involved are defined here via the bivariate quadratic basis function $\varphi:\cX_1\times\cX_1\rightarrow\R$ given by \er{eq:basis}.
The semiconvex transform and its inverse, denoted by  $\op{D}_\varphi:\cSv\rightarrow\cSa$ and $\op{D}_\varphi^{-1}:\cSa\rightarrow\cSv$, are given by \cite{DM1:15,D1:19}
\begin{align}
	\op{D}_\varphi\, \Psi
	& \doteq -\sup_{\xi\in\cX_1} \left\{ \varphi(\xi,\cdot) -\Psi(\xi)\right\},
	\qquad
	\op{D}_\varphi^{-1}\, a
	\doteq \sup_{z\in\cX_1}\left\{ \varphi(\cdot,z) + a(z)\right\},
	\label{eq:op-D}
\end{align}
for all $\Psi\in\cSv$ and $a\in\cSa$. It is also useful to define
\begin{align}
	& \delta^-(\xi,\zeta) \doteq \left\{ \ba{cl} 
			0 & \|\xi-\zeta\|_1 = 0,
			\\
			-\infty & \|\xi-\zeta\|_1\ne 0,
			\ea \right.
	\label{eq:delta-minus}
\end{align}
for all $\xi\in\cX_1$.

These definitions and concepts may now be used to establish a representation for the convolution kernel $G_t^\mu$ of \er{eq:idem-rep}.

\begin{lemma}
\label{lem:kernel-G-idem}
Given $\mu\in(0,1]$, $t\in(0,\bar t^\mu)$, the auxiliary value function $S_t^\mu$ of \er{eq:value-S}, \er{eq:value-S-explicit} and the convolution kernel $G_t^\mu$ of \er{eq:idem-rep}, \er{eq:G-mu-def} satisfy
\begin{gather}
	S_t^\mu(\xi,\cdot)\in\cSv,
	\qquad
	G_t^\mu(\xi,\zeta)
	= [\op{D}_\varphi\, S_t^\mu (\xi,\cdot)](\zeta)
	\label{eq:kernel-G-idem}
\end{gather}
for all $\xi,\zeta\in\cX_1$, in which $\op{D}_\varphi$ is the semiconvex dual operation of \er{eq:op-D} with respect to $\varphi$ of \er{eq:basis}.
\end{lemma}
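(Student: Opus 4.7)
My plan is to prove the two assertions in turn, using the explicit quadratic form of $S_t^\mu$ from Lemma \ref{lem:value-S-explicit} together with Lemma \ref{lem:op-Z-M-coercive} for the semiconvexity, and then combining a dynamic programming decomposition with the Fenchel--Moreau-style inversion from semiconvex duality theory for the dual identity. To verify $S_t^\mu(\xi,\cdot)\in\cSv$, I would inspect \er{eq:value-S-explicit}: the functional $\zeta\mapsto S_t^\mu(\xi,\zeta)+\demi\,\langle\zeta,-\op{M}\,\zeta\rangle_1$ is an affine-plus-quadratic map in $\zeta$ whose leading operator is $\op{Z}_t^\mu-\op{M}\in\bo(\cX_1)$, coercive by Lemma~\ref{lem:op-Z-M-coercive}. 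This delivers continuity (hence lower closedness) together with strict convexity in $\zeta$, which is precisely what $\cSv$ membership requires.

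For the dual identity, I would first derive the max-plus decomposition $S_t^\mu(\xi,\cdot)=\op{D}_\varphi^{-1}G_t^\mu(\xi,\cdot)$ by dynamic programming. Conditioning the supremum in \er{eq:value-S} on the terminal state $x_t=\eta$ splits the problem into an inner TPBVP (matching \er{eq:G-mu-def}) and an outer supremum over $\eta\in\cX_1$, yielding
$$S_t^\mu(\xi,\zeta) = \sup_{\eta\in\cX_1}\left\{ \sup_{\substack{w\in\cW_1[0,t]\\ x_0=\xi,\, x_t=\eta}}\int_0^t V(x_s)-T^\mu(w_s)\, ds + \varphi(\eta,\zeta)\right\} = \sup_{\eta\in\cX_1}\{G_t^\mu(\xi,\eta)+\varphi(\eta,\zeta)\},$$
which is precisely $[\op{D}_\varphi^{-1}G_t^\mu(\xi,\cdot)](\zeta)$ by \er{eq:op-D}. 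A minor technicality is the mismatch between $\cW_1[0,t]$ in \er{eq:value-S} and $\cW[0,t]$ in \er{eq:G-mu-def}, which I would reconcile through a density argument leveraging the coercivity of $T^\mu$ on $\cX_1$ from \er{eq:T-mu}.

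Applying $\op{D}_\varphi$ to this identity and invoking $\op{D}_\varphi\op{D}_\varphi^{-1}=\text{id}$ on $\cSa$ from the semiconvex duality theory of \cite{DM1:15,D1:19} yields $\op{D}_\varphi S_t^\mu(\xi,\cdot)=G_t^\mu(\xi,\cdot)$, provided $G_t^\mu(\xi,\cdot)\in\cSa$. Confirming this semiconcavity \emph{without} recourse to Theorem \ref{thm:G-mu-representation} (which this lemma is intended to enable) is the principal obstacle. I would address it by a direct variational argument: parametrise the endpoint-constrained controls in \er{eq:G-mu-def} as an affine translate of the unconstrained problem, and exploit the strong concavity of $w\mapsto J_t^\mu[\varphi(\cdot,\zeta)](\xi,w)$ on $t\in(0,\bar t^\mu)$ from Lemma \ref{lem:concave}, so that $\zeta\mapsto G_t^\mu(\xi,\zeta)$ inherits semiconcavity with modulus tied to $-\op{M}$, with upper semicontinuity following from the continuity of the endpoint map. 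Once $G_t^\mu(\xi,\cdot)\in\cSa$ is established, the semiconvex duality identities immediately close the argument.
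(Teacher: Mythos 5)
Your plan tracks the paper's own proof closely in its two main steps. The paper also uses coercivity of $\op{Z}_t^\mu-\op{M}$ (Lemma~\ref{lem:op-Z-M-coercive}) together with the quadratic form \er{eq:value-S-explicit} to conclude $S_t^\mu(\xi,\cdot)\in\cSv$, and also inserts $\varphi(\xi_t,\zeta)=\sup_{y\in\cX_1}\{\delta^-(\xi_t,y)+\varphi(y,\zeta)\}$ and exchanges the two suprema to obtain $S_t^\mu(\xi,\cdot)=\op{D}_\varphi^{-1}G_t^\mu(\xi,\cdot)$ and finish by duality. Your treatment of the $\cW[0,t]$ vs.\ $\cW_1[0,t]$ mismatch is the right instinct: any $w$ taking values outside $\cX_1$ on a set of positive measure makes $T^\mu(w_s)$ effectively $+\infty$ there and so cannot compete in the supremum.

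Where you go beyond the paper is in flagging that the final inversion $\op{D}_\varphi\op{D}_\varphi^{-1}G_t^\mu(\xi,\cdot)=G_t^\mu(\xi,\cdot)$ requires $G_t^\mu(\xi,\cdot)\in\cSa$. This concern is legitimate: the paper's one-line ``by semiconvex duality and the first assertion'' does not visibly supply it, since the biconjugate inequality $\op{D}_\varphi\op{D}_\varphi^{-1}a\ge a$ holds for all $a$ but collapses to equality only on $\cSa$, and knowing $\op{D}_\varphi^{-1}G_t^\mu(\xi,\cdot)=S_t^\mu(\xi,\cdot)\in\cSv$ does not by itself imply $G_t^\mu(\xi,\cdot)\in\cSa$. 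That said, your proposed remedy is heavier than needed: you do not need a semiconcavity modulus tied to $-\op{M}$. Since $-\op{M}$ is positive (see \er{eq:op-M}), $\demi\,\langle\cdot\,,-\op{M}\,\cdot\rangle_1$ is convex, so any concave, upper-closed $\psi:\cX_1\to\ol{\R}$ automatically has $\psi-\demi\,\langle\cdot\,,-\op{M}\,\cdot\rangle_1$ concave and upper closed, i.e.\ $\psi\in\cSa$ by \er{eq:semiconvex}. And $\zeta\mapsto G_t^\mu(\xi,\zeta)$ is concave because it is a supremum over $w$ of the jointly concave map $(w,\zeta)\mapsto\int_0^t V(x_s)-T^\mu(w_s)\,ds+\delta^-(x_t,\zeta)$: the integral term is concave in $w$ for $t\in(0,\bar t^\mu)$ by Lemma~\ref{lem:concave} with the concave terminal payoff $\delta^-(\cdot,\zeta)$, and $\delta^-(x_t,\zeta)$ is jointly concave in $(w,\zeta)$ since $\delta^-$ is jointly concave and $x_t$ depends affinely on $w$. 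This observation closes the gap at almost no cost, avoiding the delicate affine-parametrisation estimate you sketched.
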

\begin{proof}
Fix $\mu\in(0,1]$, $t\in(0,\bar t^\mu)$, and $\xi,\zeta\in\cX_1$. Applying \er{eq:basis} and Lemma \ref{lem:value-S-explicit},
\begin{align}
	S_t^\mu(\xi,\zeta) + \demi\, \langle\zeta, \, - \op{M}\, \zeta\rangle_1 
	& 
	= \demi\, \langle \xi, \, \op{X}_t^{\mu}\, \xi \rangle_1 + \langle \xi, \, \op{Y}_t^\mu\, \zeta \rangle_1 
			+ \demi\, \langle \zeta, \, (\op{Z}_t^\mu - \op{M})\, \zeta \rangle_1.
	\nn
\end{align}
As $Z_t^\mu - \op{M}$ is coercive by Lemma \ref{lem:op-Z-M-coercive}, it follows immediately that $\zeta\mapsto S_t^\mu(\xi,\zeta) + \demi\, \langle\zeta, \, - \op{M}\, \zeta\rangle_1$ is convex. Hence, $S_t^\mu(\xi,\cdot)\in\cSv$ by \er{eq:semiconvex}, yielding the first assertion in \er{eq:kernel-G-idem}.

For the remaining assertion in \er{eq:kernel-G-idem}, note by \er{eq:payoff-mu}, \er{eq:basis}, \er{eq:value-S}, and \er{eq:op-D}, that
\begin{align}
	S_t^\mu(\xi,\zeta)
	& = \!\!\! \sup_{w\in\cW_1[0,t]} \!\! J_t^\mu[\varphi(\cdot,\zeta)](\xi,w)
	= \!\!\! \sup_{w\in\cW_1[0,t]} \! \left\{ \int_0^t V(\xi_s) - T^\mu(w_s)\, ds + \varphi(\xi_t, \zeta) \right\}
	\nn\\
	& = \sup_{w\in\cW_1[0,t]} \left\{ \int_0^t V(\xi_s) - T^\mu(w_s)\, ds + \sup_{y\in\cX_1} \left\{ \delta^-(\xi_t,y) + \varphi(y,\zeta) \right\} \right\}
	\nn\\
	& = \sup_{y\in\cX_1} \left\{ \sup_{w\in\cW_1[0,t]} \left\{ \int_0^t V(\xi_s) - T^\mu(w_s)\, ds + \delta^-(\xi_t,y) \right\} + \varphi(y,\zeta) \right\}
	\nn\\
	& = \sup_{y\in\cX_1} \left\{ G_t^\mu(\xi,y) + \varphi(y,\zeta) \right\}
	= \sup_{y\in\cX_1} \left\{ \varphi(\zeta,y) + G_t^\mu(\xi,y) \right\}
	\nn\\
	& = [\op{D}_\varphi^{-1}\, G_t^\mu(\xi,\cdot)](\zeta),
	\nn
\end{align}
in which $\delta^-$ is as per \er{eq:delta-minus}, and the second last equality follows by symmetry of $\varphi$, i.e. $\varphi(\xi,\zeta) = \varphi(\zeta,\xi)$. Hence, by semiconvex duality and the first assertion,
\begin{align}
	G_t^\mu(\xi,\zeta)
	& = [\op{D}_\varphi\, \op{D}_\varphi^{-1} \, G_t^\mu(\xi,\cdot)](\zeta) = [\op{D}_\varphi\, S_t^\mu(\xi,\cdot)](\zeta),
	\nn
\end{align}
yielding the second assertion.
\end{proof}

It remains to prove Theorem \ref{thm:G-mu-representation}, using Lemma \ref{lem:kernel-G-idem}.

\begin{proof}
{\em [Theorem \ref{thm:G-mu-representation}]}
Fix $\mu\in(0,1]$, $t\in(0,\bar t^\mu)$, $\xi,\zeta\in\cX_1$. Applying Lemma \ref{lem:kernel-G-idem},
\begin{align}
	& G_t^\mu(\xi,\zeta)
	= [\op{D}_\varphi \, S_t^\mu(\xi,\cdot)](\zeta)
	= \inf_{y\in\cX_1} \left\{ S_t^\mu(\xi,y) - \varphi(y,\zeta) \right\}
	\nn\\
	& = \inf_{y\in\cX_1} \left\{
		\demi\, \langle \xi, \, \op{X}_t^{\mu}\, \xi \rangle_1 + \langle \xi, \, \op{Y}_t^\mu\, y \rangle_1 
			+ \demi\, \langle y, \, \op{Z}_t^\mu\, y \rangle_1 - \demi\, \langle y-\zeta,\, \op{M}\, (y - \zeta) \rangle_1
	\right\}
	\nn\\
	& = \demi\, \langle \xi, \, \op{X}_t^{\mu}\, \xi \rangle_1 - \demi\, \langle \zeta, \, \op{M}\, \zeta \rangle_1
	\nn\\
	& \hspace{20mm}
	+ \inf_{y\in\cX_1} \left\{ \langle y,\, (\op{Y}_t^\mu)' \, \xi + \op{M}\, \zeta \rangle_1 
				+ \demi\, \langle y,\, (\op{Z}_t^\mu - \op{M})\, y \rangle_1
	\right\}.
	\nn
\end{align}
Applying Lemma \ref{lem:op-Z-M-coercive}, observe that $\op{Z}_t^\mu - \op{M}$ is coercive, and hence boundedly invertible. Consequently, the infimum is achieved at $y = y^*\in\cX_1$, with $y^* \doteq -(\op{Z}_t^\mu - \op{M})^{-1} \, [ (\op{Y}_t^\mu)' \, \xi + \op{M}\, \zeta ]$. By substitution,
\begin{align}
	& G_t^\mu(\xi,\zeta)
	= \demi\, \langle \xi, \, \op{X}_t^{\mu}\, \xi \rangle_1 - \demi\, \langle \zeta, \, \op{M}\, \zeta \rangle_1
	+ \langle y^*,\, (\op{Y}_t^\mu)' \, \xi + \op{M}\, \zeta \rangle_1 
	\nn\\
	& \hspace{30mm}
				+ \demi\, \langle y^*,\, (\op{Z}_t^\mu - \op{M})\, y^* \rangle_1
	\nn\\
	& = \demi\, \langle \xi, \, [\op{X}_t^{\mu} - \op{Y}_t^\mu\, (\op{Z}_t^\mu - \op{M})^{-1} \, (\op{Y}_t^\mu)' ] \, \xi \rangle_1
		- \langle \xi, \, \op{Y}_t^\mu\, (\op{Z}_t^\mu - \op{M})^{-1} \, \op{M}\, \zeta \rangle_1
	\nn\\
	& \qquad\qquad
		+ \demi\, \langle \zeta,\, [-\op{M} - \op{M}\, (\op{Z}_t^\mu - \op{M})^{-1} \, \op{M}]\, \zeta \rangle_1
	\nn\\
	& \doteq \demi\, \langle \xi, \, \ophat{X}_t^\mu \, \xi \rangle + \langle \xi, \, \ophat{Y}_t^\mu \, \zeta \rangle_1 
				+ \demi\, \langle \zeta,\, \ophat{Z}_t^\mu\, \zeta \rangle_1
	= \demi \left\langle \left( \ba{c} \xi \\ \zeta \ea \right), \, \left( \ba{cc} 
						\ophat{X}_t^\mu & \ophat{Y}_t^\mu
						\\
						(\ophat{Y}_t^\mu)' & \ophat{Z}_t^\mu
					\ea \right) \left( \ba{c} \xi \\ \zeta \ea \right)
				\right\rangle_\sharp\!\!,
	\label{eq:pre-kernel-G}
\end{align}
in which $\ophat{X}_t^\mu, \ophat{Y}_t^\mu, \ophat{Z}_t^\mu\in\bo(\cX_1)$ are defined by
\begin{gather}
	\ophat{X}_t^\mu
	\doteq \op{X}_t^{\mu} - \op{Y}_t^\mu\, (\op{Z}_t^\mu - \op{M})^{-1} \, (\op{Y}_t^\mu)',
	\quad
	\ophat{Y}_t^\mu
	\doteq - \op{Y}_t^\mu\, (\op{Z}_t^\mu - \op{M})^{-1} \, \op{M},
	\quad
	 \nn\\
	\ophat{Z}_t^\mu
	\doteq -\op{M} - \op{M}\, (\op{Z}_t^\mu - \op{M})^{-1} \, \op{M},
	\nn
\end{gather}
and the inner product $\langle \cdot\,, \cdot \rangle_\sharp$ is as per the theorem statement.
Recalling \er{eq:op-M}, \er{eq:eig-XYZ}, these operators are necessarily also of the spectral form \er{eq:spectral-opA}, with their respective eigenvalues given by inspection by
\begin{gather}
	[\hat x_t^\mu]_n
	\doteq [x_t^\mu]_n - \frac{[y_t^\mu]_n^2}{[z_t^\mu]_n - m_n},
	\quad
	[\hat y_t^\mu]_n
	\doteq - \frac{[y_t^\mu]_n\, m_n}{[z_t^\mu]_n - m_n},
	\quad
	\nn\\
	[\hat z_t^\mu]_n
	\doteq -m_n - \frac{m_n^2}{[z_t^\mu]_n - m_n},
	\nn
\end{gather}
for all $n\in\N$. After applying \er{eq:eig-XYZ}, \er{eq:eig-XYZ-alt}, sum-of-angle manipulations yield
\begin{align}
	& 
	[\hat x_t^\mu]_n = [p_t^\mu]_n,
	\quad
	[\hat y_t^\mu]_n = [q_t^\mu]_n,
	\quad
 	[\hat z_t^\mu]_n = [p_t^\mu]_n,
	\label{eq:eig-equal}
\end{align}
for all $n\in\N$, where $[p_t^\mu]_n$, $[q_t^\mu]_n$ are as per \er{eq:eig-pqr}. For example, for the second equality,
\begin{align}
	[\hat y_t^\mu]_n
	& 
	\doteq - \frac{[y_t^\mu]_n\, m_n}{[z_t^\mu]_n - m_n}
	= - \frac{m_n\, \cos\theta_n^\mu\, \csc(\omega_n^\mu\, t + \theta_n^\mu)}
				{\cos^2\theta_n^\mu \, [ \, \cot\theta_n^\mu - \cot(\omega_n^\mu\, t + \theta_n^\mu) \, ]}
	\nn\\
	& = \frac{1}{\omega_n^\mu} \frac{\csc(\omega_n^\mu\, t + \theta_n^\mu)}
			{(\frac{-1}{\omega_n^\mu\, m_n})\, \cos\theta_n^\mu \, [ \, \cot\theta_n^\mu - \cot(\omega_n^\mu\, t + \theta_n^\mu) \, ]}
	\nn\\
	& = \frac{1}{\omega_n^\mu} \frac{1}{\sin(\omega_n^\mu\, t + \theta_n^\mu)\, \cos\theta_n^\mu 
					- \cos(\omega_n^\mu\, t + \theta_n^\mu)\, \sin\theta_n^\mu\, }
	\nn\\
	& = \frac{1}{\omega_n^\mu\, \sin(\omega_n^\mu\,t)} = [q_t^\mu]_n
	\nn
\end{align}
for all $n\in\N$. The other two equalities in \er{eq:eig-equal} follow similarly.
Consequently, $\ophat{X}_t^\mu = \op{P}_t^\mu = \ophat{Z}_t^\mu$ and $\ophat{Y}_t^\mu = \op{Q}_t^\mu = (\op{Q}_t^\mu)'$ by \er{eq:op-PQR}, \er{eq:eig-equal}, so that \er{eq:G-mu} follows by \er{eq:pre-kernel-G}.
\end{proof}

\begin{remark}
The Hessian operator in \er{eq:G-mu} may also be interpreted as the solution of a differential Riccati equation \cite{DM1:15} that arises in an optimal control problem of the form \er{eq:value-mu} with $\psi\doteq\delta^-(\cdot,\zeta)$, i.e. the optimal TPBVP \er{eq:G-mu-def}, where $\delta^-$ is as per \er{eq:delta-minus}.
\hfill{$\square$}
\end{remark}
Some useful properties of the operators $\op{E}_\mu$ and $\op{P}_t^\mu$, $\op{Q}_t^\mu$ of \er{eq:op-E} and \er{eq:op-PQR} follow by generalising results from \cite[Appendix B]{DM1:17}. These properties find application in the group constructions to follow.

\begin{lemma}
\label{lem:op-properties}
Given $\mu\in\R_{>0}$, $t\in(0,\bar t^\mu)$, operators $\op{E}_\mu$, $\op{P}_t^\mu$, $\op{Q}_t^\mu$ of \er{eq:op-E}, \er{eq:op-PQR} are bounded and boundedly invertible, with 
$$ 
	\op{E}_\mu\in\bo(\cX_1;\cX), \ \op{E}_\mu^{-1}\in\bo(\cX;\cX_1), \quad
	\op{P}_t^\mu, \op{Q}_t^\mu, (\op{P}_t^\mu)^{-1}, (\op{Q}_t^\mu)^{-1}\in\bo(\cX_1).
$$
Moreover, given $\omega_n^\mu$ as per \er{eq:omega-lambda},
\begin{gather}
	\op{E}_\mu\, \xi = \sum_{n=1}^\infty 
	\rev{\omega_n^\mu\, }
	\langle \xi,\, \tilde\varphi_n \rangle_1\, \varphi_n,
	\quad
	(\op{P}_t^\mu)^{-1}\, \xi 
	= -\sum_{n=1}^\infty \omega_n^\mu\, \tan(\omega_n^\mu\,t )\, \langle \xi, \, \tilde\varphi_n \rangle_1 \, \tilde\varphi_n\,,
	\nn\\
	\op{E}_\mu^{-1}\, \pi
	= \sum_{n=1}^\infty 
	\rev{\ts{\frac{1}{\omega_n^\mu}}\,}
	\langle \pi,\, \varphi_n \rangle\, \tilde\varphi_n,
	\quad
	(\op{Q}_t^\mu)^{-1}\, \xi
	= \sum_{n=1}^\infty \omega_n^\mu\, \sin(\omega_n^\mu\, t)\, \langle \xi, \, \tilde\varphi_n \rangle_1 \, \tilde\varphi_n\,,
	\nn\\
	-(\op{Q}_t^\mu)^{-1}\, \op{P}_t^\mu\, \xi
	=  \sum_{n=1}^\infty \cos(\omega_n^\mu\, t)\, \langle \xi, \, \tilde\varphi_n \rangle_1 \, \tilde\varphi_n\,,
	\nn\\
	(\op{Q}_t^\mu)^{-1}\, (\op{E}_t^\mu)^{-1}\, \pi
	= \sum_{n=1}^\infty \sin(\omega_n^\mu\,t) \, \langle \pi, \, \varphi_n \rangle \, \tilde\varphi_n\,,
	\nn\\
	-\op{E}_\mu\, \op{Q}_t^\mu \, (\op{I} - [(\op{Q}_t^\mu)^{-1}\, \op{P}_t^\mu]^2 ) \, \xi
	= -\sum_{n=1}^\infty \sin(\omega_n^\mu\, t) \, \langle \xi, \, \tilde\varphi_n \rangle_1\, \varphi_n\,,
	\nn\\
	-\op{E}_\mu\, \op{P}_t^\mu\, (\op{Q}_t^\mu)^{-1}\, \op{E}_\mu^{-1}
	= \sum_{n=1}^\infty \cos(\omega_n^\mu\,t) \, \langle \pi,\, \varphi_n \rangle \, \varphi_n\,,
	\label{eq:op-properties}
\end{gather}
for all $\xi\in\cX_1$, $\pi\in\cX$.
\end{lemma}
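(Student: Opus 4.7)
The plan is to reduce every claim to a scalar computation in the common eigenbasis $\{\tilde\varphi_n\}$ and then extract boundedness and invertibility from uniform bounds on the resulting eigenvalue sequences. First I would establish the spectral formula for $\op{E}_\mu$. Since $\tilde\varphi_n$ is an eigenvector of $\opA$ with eigenvalue $\lambda_n$, the normalization $\varphi_n = \sqrt{\lambda_n}\,\tilde\varphi_n$ gives $\opAsqrt\,\tilde\varphi_n = \varphi_n$ and $\opAsqrt\,\varphi_n = \sqrt{\lambda_n}\,\varphi_n$, while Remark \ref{rem:op-I-eig} yields $\op{I}_\mu^\half\,\varphi_n = (1+\mu^2\lambda_n)^{-1/2}\,\varphi_n$. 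Composing gives $\op{E}_\mu\,\tilde\varphi_n = \omega_n^\mu\,\varphi_n$. Expanding an arbitrary $\xi\in\cX_1$ in the orthonormal basis $\{\tilde\varphi_n\}$ and invoking the functional calculus used in Remark \ref{rem:op-I-eig} then yields the first formula in \er{eq:op-properties}, and the formula for $\op{E}_\mu^{-1}$ drops out by inverting each scalar eigenvalue.

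Next, the boundedness and invertibility claims reduce to uniform bounds on eigenvalue sequences. From \er{eq:omega-mu-theta} we have $\omega_1^\mu \le \omega_n^\mu \le 1/\mu$, so $\|\op{E}_\mu\,\xi\|^2 \le \mu^{-2}\|\xi\|_1^2$ and $\|\op{E}_\mu^{-1}\,\pi\|_1^2 \le (\omega_1^\mu)^{-2}\|\pi\|^2$, placing $\op{E}_\mu\in\bo(\cX_1;\cX)$ and $\op{E}_\mu^{-1}\in\bo(\cX;\cX_1)$. For $\op{P}_t^\mu$ and $\op{Q}_t^\mu$, the constraint $t\in(0,\bar t^\mu)$ together with $\omega_n^\mu \le 1/\mu$ forces $\omega_n^\mu\,t$ into a compact subinterval of $(0,\sqrt 2)\subset(0,\pi/2)$, on which $\sin$ and $\tan$ are bounded away from both $0$ and $\infty$. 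This simultaneously bounds the eigenvalues in \er{eq:eig-pqr} and their reciprocals $-\omega_n^\mu\tan(\omega_n^\mu t)$ and $\omega_n^\mu\sin(\omega_n^\mu t)$, placing all of $\op{P}_t^\mu,\,\op{Q}_t^\mu,\,(\op{P}_t^\mu)^{-1},\,(\op{Q}_t^\mu)^{-1}$ in $\bo(\cX_1)$ and yielding the two reciprocal spectral formulas in \er{eq:op-properties}.

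The four remaining composite formulas then follow by trigonometric bookkeeping, since every operator in sight shares the eigenbasis $\{\tilde\varphi_n\}$ with codomain eigenvectors $\tilde\varphi_n$ or $\varphi_n$ according to whether the codomain is $\cX_1$ or $\cX$. Each identity reduces to a scalar calculation: for example $-(\op{Q}_t^\mu)^{-1}\op{P}_t^\mu$ has eigenvalue $\sin(\omega_n^\mu t)/\tan(\omega_n^\mu t) = \cos(\omega_n^\mu t)$, while the $\op{E}_\mu$ and $\op{E}_\mu^{-1}$ factors in the final two identities merely convert between the $\cX_1$-basis $\{\tilde\varphi_n\}$ and the $\cX$-basis $\{\varphi_n\}$ through the scalars $\omega_n^\mu$ and $1/\omega_n^\mu$. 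The only real obstacle I anticipate is bookkeeping: keeping straight the distinction between $\tilde\varphi_n$ and $\varphi_n$, between the inner products $\langle\cdot,\cdot\rangle_1$ and $\langle\cdot,\cdot\rangle$, and between the domains and codomains of each composition; once this is done systematically, every assertion collapses to a standard trigonometric identity.
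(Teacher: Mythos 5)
Your proposal is correct and follows the same route as the paper's proof: reduce every assertion to a scalar computation in the common eigenbasis $\{\tilde\varphi_n\}$, derive the spectral formula for $\op{E}_\mu$ (and hence $\op{E}_\mu^{-1}$) from the action on eigenvectors, and bound the eigenvalue sequences of $\op{P}_t^\mu$, $\op{Q}_t^\mu$ and their reciprocals using $\omega_1^\mu \le \omega_n^\mu \le 1/\mu$ together with $t < \bar t^\mu = \mu\sqrt 2$. Your observation that $\omega_n^\mu t$ stays in a compact subinterval $[\omega_1^\mu t,\, t/\mu] \subset (0,\sqrt 2)\subset(0,\pi/2)$ so that $\sin$ and $\tan$ are simultaneously bounded above and away from zero is a slightly tidier way to obtain all four boundedness claims for $\op{P}_t^\mu$, $\op{Q}_t^\mu$, $(\op{P}_t^\mu)^{-1}$, $(\op{Q}_t^\mu)^{-1}$ in one stroke, but it rests on the same scalar estimates the paper uses case by case.
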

\begin{proof}
The first and last equalities in \er{eq:op-properties} and associated boundedness properties are demonstrated below. The remaining equalities and bounds follow using analogous arguments.

{\em First equality in \er{eq:op-properties}:} Fix $\mu\in\R_{>0}$, $t\in(0,\bar t^\mu)$, $\xi\in\cX_1$, $\pi\in\cX$. Recall by \er{eq:op-I-mu}, \er{eq:op-E}, \er{eq:spectral-opA}, \er{eq:omega-lambda} that $\Lambda^\half:\cX_1\rightarrow\cX$, $\op{I}_\mu^\half:\cX\rightarrow\cX_1$, and $\op{I}_\mu^\half\, \Lambda^\half:\cX_1\rightarrow\cX_1$ satisfy
\begin{align}
	\Lambda^\half\, \xi
	& = \sum_{n=1}^\infty \sqrt{\lambda_n} \, \langle \xi,\, \tilde\varphi_n \rangle_1\, \tilde\varphi_n
	=  \sum_{n=1}^\infty \langle \xi,\, \tilde\varphi_n \rangle_1\, \varphi_n\,,
	\nn\\
	\op{I}_\mu^\half\, \pi
	& = \sum_{n=1}^\infty \frac{1}{\sqrt{1 + \mu\, \lambda_n}}\, \langle \pi,\, \varphi_n \rangle\, \varphi_n
	= \sum_{n=1}^\infty \omega_n^\mu\, \langle \pi,\, \varphi_n \rangle\, \tilde\varphi_n\,,
	\nn\\
	\op{I}_\mu^\half\, \Lambda^\half\, \xi
	& = \sum_{n=1}^\infty \omega_n^\mu\, \langle \Lambda^\half\, \xi,\, \varphi_n \rangle \, \tilde\varphi_n
	= \sum_{n=1}^\infty \omega_n^\mu\, \left\langle \sum_{k=1}^\infty \langle \xi,\, \tilde\varphi_k \rangle_1\, \varphi_k, \, 
						\varphi_n \right\rangle \, \tilde\varphi_n
	\nn\\
	& \hspace{30mm}
	= \sum_{n=1}^\infty \omega_n^\mu\, \langle \xi,\, \tilde\varphi_n \rangle_1\, \tilde\varphi_n\,.
	\nn
\end{align}
Hence, $\op{E}_\mu = \Lambda^\half\,\op{I}_\mu^\half\, \Lambda^\half:\cX_1\rightarrow\cX$ satisfies
\begin{align}
	\op{E}_\mu\, \xi
	& = \Lambda^\half\, (\op{I}_\mu^\half\, \Lambda^\half)\, \xi 
	= \sum_{n=1}^\infty \langle \op{I}_\mu^\half\, \Lambda^\half\, \xi,\, 
						\tilde\varphi_n \rangle_1\, \varphi_n
	\nn\\
	& 
	= \sum_{n=1}^\infty \left\langle \sum_{k=1}^\infty \omega_k^\mu\, \langle \xi,\, \tilde\varphi_k \rangle_1\, \tilde\varphi_k,\, 
						\tilde\varphi_n \right\rangle_1 \varphi_n
	= \sum_{n=1}^\infty \omega_n^\mu\, \langle \xi,\, \tilde\varphi_n \rangle_1\, \varphi_n\,,
	\label{eq:op-E-explicit}
\end{align}
as per the first equality in \er{eq:op-properties}. Note by inspection that $\|\op{E}_\mu\|_{\bo(\cX_1;\cX)} \le \sup_{n\in\N} |\omega_n^\mu| = \frac{1}{\mu} < \infty$.

{\em Last equality in \er{eq:op-properties}:} 
Fix $\mu\in\R_{>0}$, $t\in(0,\bar t^\mu)$. By inspection of \er{eq:omega-lambda}, \er{eq:op-PQR}, \er{eq:eig-pqr}, note that
$|([q_t^\mu]_n)^{-1}| = |\omega_n^\mu| \, |\sin(\omega_n^\mu\, t)| \le \ts{\frac{1}{\mu}}$ for all $n\in\N$. Consequently, a bounded operator of the form \er{eq:spectral-opA} is defined by
\begin{align}
	\op{R}_t^\mu\, \xi
	\doteq \sum_{n=1}^\infty ([q_t^\mu]_n)^{-1} \, \langle \xi, \, \tilde\varphi_n \rangle_1 \, \tilde\varphi_n
	\nn
\end{align}
for all $\xi\in\cX_1$, with $\|\op{R}_t^\mu\|_{\bo(\cX_1)} \le \frac{1}{\mu}$. Fix any $\xi\in\cX_1$, $\pi\in\cX$. Recalling \er{eq:op-PQR},
\begin{align}
	\op{R}_t^\mu\, \op{Q}_t^\mu\, \xi
	& = \sum_{n=1}^\infty ([q_t^\mu]_n)^{-1} \left\langle \sum_{k=1}^\infty [q_t^\mu]_k \, \langle\xi,\, \tilde\varphi_k \rangle_1 \, 
		\tilde\varphi_k , \, \tilde\varphi_n \right\rangle_1 \tilde\varphi_n
	\nn\\
	&  = \sum_{n=1}^\infty \sum_{k=1}^\infty \frac{[q_t^\mu]_k}{[q_t^\mu]_n} \langle\xi,\, \tilde\varphi_k\rangle_1\, 
	\langle \tilde\varphi_k,\, \tilde\varphi_n \rangle_1\, \tilde\varphi_n
	= \sum_{n=1}^\infty \langle\xi,\, \tilde\varphi_n\rangle_1\, \tilde\varphi_n = \xi,
	\nn
\end{align}
so that $(\optilde{Q}_t^\mu)^{-1} \doteq \optilde{R}_t^\mu \in \bo(\cX_1)$. Similarly, recalling \er{eq:op-E-explicit}, 
\begin{align}
	\op{E}_\mu^{-1}\, \pi
	& = \sum_{n=1}^\infty \frac{1}{\omega_n^\mu} \, \langle \pi, \, \varphi_n \rangle\, \tilde\varphi_n\,,
	\label{eq:op-E-inv-explicit}
\end{align}
and note further that $\|\op{E}_\mu^{-1}\|_{\bo(\cX;\cX_1)} \le \sup_{n\in\N} 1/|\omega_n^\mu| = 1/|\omega_1^\mu| < \infty$. Applying \er{eq:op-PQR}, \er{eq:eig-pqr}, \er{eq:op-E-explicit}, \er{eq:op-E-inv-explicit}, and the definition of $\op{R}_t^\mu = (\op{Q}_t^\mu)^{-1}$ above, analogous calculations yield
\begin{align}
	-\op{E}_\mu\, \op{P}_t^\mu\, (\op{Q}_t^\mu)^{-1}\, \op{E}_\mu^{-1}\, \pi
	& = - \sum_{n=1}^\infty \omega_n^\mu\, \frac{[p_t^\mu]_n}{[q_t^\mu]_n} \, \frac{1}{\omega_n^\mu} \, \langle \pi, \, \varphi_n \rangle\, \varphi_n
	\nn\\
	& 
	= \sum_{n=1}^\infty \frac{\sin(\omega_n^\mu\, t)}{\tan(\omega_n^\mu\, t)} \, \langle \pi, \, \varphi_n \rangle\, \varphi_n
	= \sum_{n=1}^\infty \cos(\omega_n^\mu\, t) \, \langle \pi, \, \varphi_n \rangle\, \varphi_n\,,
	\nn
\end{align}
as per the last equality in \er{eq:op-properties}. By inspection, $\|-\op{E}_\mu\, \op{P}_t^\mu\, (\op{Q}_t^\mu)^{-1}\, \op{E}_\mu^{-1}\|_{\bo(\cX)} \le 1$.
\end{proof}

\begin{remark}
\label{rem:escape}
By inspection of \er{eq:op-PQR}, \er{eq:eig-pqr}, along with Lemma \ref{lem:op-properties}, the respective eigenvalues of operators $\op{P}_t^\mu$, $(\op{P}_t^\mu)^{-1}$, $\op{Q}_t^\mu\in\bo(\cX_1)$ experience finite escape behaviour, with
\begin{align}
	& \lim_{t\rightarrow(\ts{\frac{j\, \pi}{\omega_n^\mu}})} |[p_t^\mu]_n| = \infty =
	\lim_{t\rightarrow(\ts{\frac{j\, \pi}{\omega_n^\mu}})} |[q_t^\mu]_n|\,,
	\quad
	\lim_{t\rightarrow(\ts{\frac{(j - \frac{1}{2})\, \pi}{\omega_n^\mu}})} \ts{\frac{1}{|[p_t^\mu]_n|}} = \infty,
	\quad n,j\in\N.
	\label{eq:escape}
\end{align}
The first of these escape times is $\inf_{n\in\N} \ts{\frac{\pi}{2\, \omega_n^\mu}} = (\ts{\frac{\pi}{2\sqrt{2}}})\, \bar t^\mu \approx 1.11\, \bar t^\mu, $ which is accompanied by an anticipated loss of concavity of $J_t^\mu(\xi,\cdot)$ for horizons beyond $\bar t^\mu$, for any $\xi\in\cX_1$.
\hfill{$\square$}
\end{remark}


\section{Group construction \rev{via optimal control}}
\label{sec:group}

Hamilton's action principle suggests that the characteristic system associated with the optimal control problem \er{eq:value-mu} may be used to represent all solution of the wave equation \er{eq:wave} via its approximation \er{eq:wave-mu}. This motivates construction of a prototype fundamental solution semigroup for \er{eq:wave-mu}, and its subsequent validation via Theorem \ref{thm:generators}. The finite escape behaviour identified in Remark \ref{rem:escape} further suggests that this construction proceed for arbitrary horizons via the temporal concatenation of sufficiently many sufficiently short horizons, using the aforementioned short horizon prototype.


\subsection{Short horizon prototype}
A prototype \rev{element of} the group $\{\cU_s^\mu\}_{s\in\R}$ of \er{eq:group-mu} may be constructed \cite{DM3:15,DM4:15} \rev{on a short horizon} via a special case of the optimal control problem \er{eq:value-mu}, using the idempotent representation \er{eq:idem-rep}, \er{eq:G-mu}. In particular, \rev{a fixed short horizon $t\in(0,\bar t^\mu)$ and specific terminal payoff $\psi = \psi_v:\cX_1\rightarrow\R$ are} considered in \er{eq:value-mu}, with
\begin{align}
	\psi(\xi) = \psi_v(\xi) & \doteq \langle \xi\,, \op{E}_\mu^{-1}\, v  \rangle_1,
	\quad \xi\in\cX_1,
	\label{eq:psi-v}
\end{align}
for any fixed $v\in\cX$, in which $\op{E}_\mu^{-1}\in\bo(\cX;\cX_1)$ by \rev{Lemma \ref{lem:op-properties}.}
As $G_t^\mu(\cdot,\zeta)$, $G_t^\mu(\xi,\cdot)$, and $\rev{\psi =\, } \psi_v$ in \er{eq:idem-rep} are {\Frechet} differentiable for any $\xi,\zeta\in\cX_1$, the supremum there must be achieved where the Riesz representation of the {\Frechet} derivative of $G_t^\mu(\xi,\cdot) + \psi_v(\cdot)$ is zero. That is, 
\begin{align}
	0 & = \grad_\zeta [G_t^\mu(\xi,\zeta) + \psi_v(\zeta) ]_{\zeta=\zeta_\xi^*}
	= \op{Q}_t^\mu\, \xi + \op{P}_t^\mu\, \zeta_\xi^* + \op{E}_\mu^{-1}\, v
	\label{eq:idem-rep-Frechet}
\end{align}
for any $\zeta_\xi^*\in\argmax_{\zeta\in\cX_1} \{ G_t^\mu(\xi,\zeta) + \psi_v(\zeta) \}$. As $\op{P}_t^\mu$ is \rev{boundedly invertible for $t\in(0,\bar t^\mu)$ by Lemma \ref{lem:op-properties},} the achieved terminal state $\zeta_\xi^*$ is defined uniquely by \er{eq:idem-rep-Frechet}, and representation \er{eq:idem-rep} of $W_t^\mu(x)$ subsequently follows. In particular,
\begin{equation}
	\begin{aligned}
	& \zeta_\xi^*
	= -(\op{P}_t^\mu)^{-1} \left( \op{Q}_t^\mu\, \xi + \op{E}_\mu^{-1}\, v \right),
	&&& 
	W_t^\mu(\xi) 
	= G_t^\mu(\xi,\zeta_\xi^*) + \psi_v(\zeta_\xi^*)\,,
	\end{aligned}
	\label{eq:z-star}
\end{equation}
for all $\xi\in\cX_1$. With a view to computing the corresponding optimal input \er{eq:w-star}, note via \er{eq:idem-rep}, \er{eq:idem-rep-Frechet} and the chain rule that
\begin{align}
	& \grad W_t^\mu(\xi)
	= \grad [ G_t^\mu(\xi,\zeta_\xi^*) + \psi_v(\zeta_\xi^*)]
	\nn\\
	& = \grad_\xi G_t^\mu(\xi,\zeta)|_{\zeta=\zeta_\xi^*} + (D_\xi \zeta_\xi^*)\, \grad_\zeta [ G_t^\mu(\xi,\zeta) + \psi_v(\zeta)]_{\zeta = \zeta_\xi^*}
	= \grad_\xi G_t^\mu(\xi,\zeta)|_{\zeta=\zeta_\xi^*},
	\nn
\end{align}
in which $D_\xi \zeta_\xi^*\in\bo(\cX_1)$ is the Frechet derivative of the mapping $\xi\mapsto \zeta_\xi^*$, $\xi\in\cX_1$, and the final equality follows by \er{eq:idem-rep-Frechet}. Hence, recalling \rev{\er{eq:idem-rep},} \er{eq:G-mu}, \er{eq:z-star},
\begin{align}
	& \grad W_t^\mu(\xi)
	= \op{P}_t^\mu \xi + \op{Q}_t^\mu \zeta_\xi^*
	= ( \op{P}_t^\mu -\op{Q}_t^\mu\, (\op{P}_t^\mu)^{-1}\, \op{Q}_t^\mu) \, \xi
	- \op{Q}_t^\mu\, (\op{P}_t^\mu)^{-1}\, \op{E}_\mu^{-1}\, v.
	\label{eq:grad-W-mu}
\end{align}
From \er{eq:w-star}, the optimal control is
\begin{align}
	w_s^*
	& = \op{I}_\mu^{\half}\, \pi_s,
	\quad \pi_s\doteq\op{E}_\mu\, \grad W_{t-s}^\mu(\xi_s),
	\quad s\in[0,t].
	\nn
\end{align}
The specific choice \er{eq:psi-v} of terminal payoff $\psi = \psi_v$ yields
\begin{align}
	& \pi_t
	= \op{E}_\mu\, \grad W_0(\xi_t) = \op{E}_\mu\,\grad \psi_v(\xi_t) =   \op{E}_\mu\,  \op{E}_\mu^{-1}\, v = v.
	\nn
\end{align}
Hence, \er{eq:idem-rep}, \er{eq:z-star}\rev{, \er{eq:grad-W-mu}} further imply that 
\begin{equation}
	\begin{aligned}
	& \xi_t 
	= \zeta_{\xi_0}^*
	= - (\op{P}_t^\mu)^{-1} \, \op{Q}_t^\mu\, \xi_0 - (\op{P}_t^\mu)^{-1} \, \op{E}_\mu^{-1}\, \pi_t,
	\\
	& \pi_0
	=  \op{E}_\mu\, \grad W_t(\xi_0)
	= \op{E}_\mu\, ( \op{P}_t^\mu -\op{Q}_t^\mu\, (\op{P}_t^\mu)^{-1}\, \op{Q}_t^\mu) \, \xi_0
	-  \op{E}_\mu\, \op{Q}_t^\mu\, (\op{P}_t^\mu)^{-1}\, \op{E}_\mu^{-1}\, \pi_t.
	\end{aligned}
	\label{eq:pi-0}
\end{equation}
By exploiting invertibility of the operators involved, \rev{see Lemma \ref{lem:op-properties},} some straightforward manipulations yield that
\begin{align}
	& \hspace{-3mm}
	\left( \ba{c}
		\xi_t
		\\
		\pi_t
	\ea \right)
	= \wh{\op{U}}_t^\mu
		\left( \ba{c}
		\xi_0
		\\
		\pi_0
	\ea \right)\!, \quad
	\wh{\op{U}}_t^\mu
	\doteq
	 \left( \ba{c|c}
	 	[\wh{\op{U}}_t^\mu]_{11} & [\wh{\op{U}}_t^\mu]_{12}
		\\[0mm]
		& \\[-3mm]\hline
		& \\[-3mm]
	 	[\wh{\op{U}}_t^\mu]_{21} & [\wh{\op{U}}_t^\mu]_{22}
	\ea \right)\!,
	\label{eq:prototype-mu}
\end{align}
where $[\wh{\op{U}}_t^\mu]_{11}\in\bo(\cX_1)$, $[\wh{\op{U}}_t^\mu]_{12}\in\bo(\cX;\cX_1)$, $[\wh{\op{U}}_t^\mu]_{21}\in\bo(\cX_1;\cX)$, $[\wh{\op{U}}_t^\mu]_{22}\in\bo(\cX)$, $t\in(0,\bar t^\mu)$, are given by
\begin{equation}
	\begin{aligned}
	{[\wh{\op{U}}_t^\mu]_{11}}
	& \doteq 
	-(\op{Q}_t^\mu)^{-1}\, \op{P}_t^\mu,
	&
	[\wh{\op{U}}_t^\mu]_{12}
	& \doteq (\op{Q}_t^\mu)^{-1}\, \op{E}_\mu^{-1},
	\\
	[\wh{\op{U}}_t^\mu]_{21}
	& \doteq \rev{-} \op{E}_\mu\, \op{Q}_t^\mu\, \left( \op{I} - [(\op{Q}_t^\mu)^{-1} \, \op{P}_t^\mu]^2 \right),
	&
	[\wh{\op{U}}_t^\mu]_{22}
	& \doteq - \op{E}_\mu\, \op{P}_t^\mu\, (\op{Q}_t^\mu)^{-1}\, \op{E}_\mu^{-1}.
	\end{aligned}
	\label{eq:prototype-mu-elmts}
\end{equation}
\rev{
\begin{lemma}
\label{lem:short-horizon-prototype}
Given $\mu\in\R_{>0}$, $t\in(0,\bar t^\mu)$, the operators $\op{U}_t^\mu, \wh{\op{U}}_t^\mu\in\bo(\cY)$ of \er{eq:group-mu}, \er{eq:group-mu-elmts} and \er{eq:prototype-mu}, \er{eq:prototype-mu-elmts} are equivalent, i.e.
$\wh{\op{U}}_t^\mu = \op{U}_t^\mu$.
\end{lemma}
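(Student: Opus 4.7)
The plan is to verify the equality $\wh{\op{U}}_t^\mu = \op{U}_t^\mu$ block by block, by noting that Lemma \ref{lem:op-properties} already supplies spectral expansions for exactly those composite operators that appear in \er{eq:prototype-mu-elmts}, and these expansions coincide with the spectral expansions of $[\op{U}_t^\mu]_{ij}$ given in \er{eq:group-mu-elmts}. So the proof amounts to reading off matching eigenvalue sequences in the common orthonormal bases $\{\tilde\varphi_n\}_{n\in\N}$ of $\cX_1$ and $\{\varphi_n\}_{n\in\N}$ of $\cX$.

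Concretely, I would first fix $\mu\in\R_{>0}$ and $t\in(0,\bar t^\mu)$ and record that, by Lemma \ref{lem:op-properties}, each of the four composite operators in \er{eq:prototype-mu-elmts} is bounded on the appropriate space and admits the spectral forms
\begin{align*}
-(\op{Q}_t^\mu)^{-1}\op{P}_t^\mu\, \xi
 & = \sum_{n=1}^\infty \cos(\omega_n^\mu\, t)\, \langle \xi,\tilde\varphi_n\rangle_1\, \tilde\varphi_n,
\\
(\op{Q}_t^\mu)^{-1}\op{E}_\mu^{-1}\, \pi
 & = \sum_{n=1}^\infty \sin(\omega_n^\mu\, t)\, \langle \pi,\varphi_n\rangle\, \tilde\varphi_n,
\\
-\op{E}_\mu\, \op{Q}_t^\mu\, \bigl(\op{I} - [(\op{Q}_t^\mu)^{-1}\op{P}_t^\mu]^2\bigr)\, \xi
 & = -\sum_{n=1}^\infty \sin(\omega_n^\mu\, t)\, \langle \xi,\tilde\varphi_n\rangle_1\, \varphi_n,
\\
-\op{E}_\mu\, \op{P}_t^\mu\, (\op{Q}_t^\mu)^{-1}\, \op{E}_\mu^{-1}\, \pi
 & = \sum_{n=1}^\infty \cos(\omega_n^\mu\, t)\, \langle \pi,\varphi_n\rangle\, \varphi_n.
\end{align*}
Comparing with the formulas \er{eq:group-mu-elmts} for $[\op{U}_t^\mu]_{ij}$ gives the four block identities
$[\wh{\op{U}}_t^\mu]_{11} = [\op{U}_t^\mu]_{11}$ on $\cX_1$,
$[\wh{\op{U}}_t^\mu]_{12} = [\op{U}_t^\mu]_{12}$ on $\cX$,
$[\wh{\op{U}}_t^\mu]_{21} = [\op{U}_t^\mu]_{21}$ on $\cX_1$, and
$[\wh{\op{U}}_t^\mu]_{22} = [\op{U}_t^\mu]_{22}$ on $\cX$. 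Assembling the blocks via \er{eq:group-mu} and \er{eq:prototype-mu} yields $\wh{\op{U}}_t^\mu = \op{U}_t^\mu$ as elements of $\bo(\cY)$.

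I do not expect any substantive obstacle: Lemma \ref{lem:op-properties} was designed precisely to supply the spectral expansions required here, and the operators $\op{E}_\mu$, $\op{P}_t^\mu$, $\op{Q}_t^\mu$ and their inverses are all diagonal in the basis $\{\tilde\varphi_n\}_{n\in\N}$, so the compositions are computed purely at the level of eigenvalue sequences. The only mild point to confirm is that the short-horizon restriction $t\in(0,\bar t^\mu)$ keeps $\op{P}_t^\mu$ and $\op{Q}_t^\mu$ boundedly invertible (Lemma \ref{lem:op-properties}) and hence the expressions in \er{eq:prototype-mu-elmts} well-defined; this is guaranteed by the finite-escape analysis of Remark \ref{rem:escape}, so no extra work is needed. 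Since equality on the orthonormal bases of the relevant Hilbert spaces extends by continuity of the bounded operators involved, the block-wise identifications suffice.
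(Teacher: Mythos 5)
Your proof is correct and takes exactly the route the paper takes: the paper's one-line argument simply says the assertion follows by comparing \er{eq:group-mu}, \er{eq:group-mu-elmts} with \er{eq:prototype-mu}, \er{eq:prototype-mu-elmts} via Lemma \ref{lem:op-properties}, and your proposal is precisely that block-by-block spectral comparison carried out explicitly. The four displayed identities you quote match the last four lines of \er{eq:op-properties} and the four series in \er{eq:group-mu-elmts} term for term, so nothing further is needed.
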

\begin{proof}
Fix $\mu\in\R_{>0}$, $t\in(0,\bar t^\mu)$, $\xi\in\cX_1$, $\pi\in\cX$. The assertion follows by comparing \er{eq:group-mu}, \er{eq:group-mu-elmts} with \er{eq:prototype-mu}, \er{eq:prototype-mu-elmts}, via Lemma \ref{lem:op-properties}.
\end{proof}
}

\if{false}

\question{XX exclude the following associative stuff for now, leave it until the long horizon? XX}

\rev{$\wh{\op{U}}_t^\mu = \op{U}_t^\mu$ for all $t\in(0,\bar t^\mu)$.
Moreover, as the eigenvalues $[p_t^\mu]_n$, $[q_t^\mu]_n$ of $\op{P}_t^\mu$, $\op{Q}_t^\mu$ are odd functions of $t$, the bounded linear operator components of $\wh{\op{U}}_t^\mu$ appearing in \er{eq:prototype-mu-elmts} may also be defined for $t\in(-\bar t^\mu,0)$. Meanwhile, with $t=0$ and $\xi_0,\zeta\in\cX_1$, \er{eq:idem-rep} implies that $G_t^\mu(\xi_0,\zeta) = \delta_{\zeta}^-(\xi_0) $, so that $\xi_t = \zeta_{\xi_0}^* = \xi_0$ and $\pi_t = \pi_0$. That is, $\wh{\op{U}}_t^\mu$ is also defined at $t=0$, with $\wh{\op{U}}_0^\mu = \op{I}$. Some straightforward manipulations then yield that $\wh{\op{U}}_{-t}^\mu\ \wh{\op{U}}_t^\mu = \op{I} = \wh{\op{U}}_{t}^\mu\ \wh{\op{U}}_{-t}^\mu$ for all $t\in(-\bar t^\mu,\bar t^\mu)$, and more generally \cite{DM4:15} 
\begin{align}
	\wh{\op{U}}_s^\mu\ \wh{\op{U}}_t^\mu
	= \wh{\op{U}}_{s+t}^\mu, 
	\quad
	\wh{\op{U}}_t^\mu
	& = \op{U}_t^\mu,
	\quad s,t,s+t\in(-\bar t^\mu,\bar t^\mu).
	\label{eq:group-mu-elmt-prototype}
\end{align}
Consequently, the set $\{\wh{\op{U}}_t^\mu\}_{t\in(-\bar t^\mu,\bar t^\mu)}$ 
can} be regarded as a {\em prototype} for the group $\{ \op{U}_t^\mu \}_{t\in\R}$ defined by \er{eq:group-mu}, \er{eq:group-mu-elmts}. \rev{As this prototype is defined entirely via the optimal control problem \er{eq:value-mu}, \er{eq:psi-v}, it is emphasised that the associative group property involved is restricted to holding on $(-\bar t^\mu,\bar t^\mu)$. Its extension to longer horizon requires extra work.}

\question{XX Formalise a proof of \er{eq:group-mu-elmt-prototype}, etc? Would be as per \cite{DM4:15}. XX}

\fi


\subsection{Longer horizons}
The correspondence between stationary action and optimal control can break down for longer time horizons due to a loss of concavity of the payoff \er{eq:payoff}, see Lemma \ref{lem:concave} and the finite escape property \er{eq:escape} associated with the  idempotent representation \er{eq:idem-rep}, \er{eq:G-mu}, \er{eq:op-PQR}, \er{eq:eig-pqr}. Consequently, for longer horizons, a modified approach is required. The basis for two such approaches has been proposed for finite dimensional problems, see \cite{MD1:17,MD1:15}, based on replacing the {\em sup} operation in \er{eq:value-mu}, \er{eq:idem-rep} with a {\em stat} operation. In particular, this {\em stat} operation can be used to define a value function analogous to \er{eq:value-mu}, \er{eq:idem-rep} corresponding to a \rev{\em stationary} payoff, without the need to assume that stationarity is achieved at a \rev{maximum}. Alternatively, by retaining the sup operation in \er{eq:value-mu} on shorter horizons, longer time horizons can be accumulated by concatenating these short horizons, with the stat operation used to relax the constraints associated with the intermediate states joining adjacent horizons via a generalisation of $G_t^\mu$ in \er{eq:idem-rep}, \er{eq:G-mu}.
Here, this latter approach is considered, with 
an appropriate definition of the {\em stat} operation given by
\begin{align}
	& \stat_{\zeta\in\cX_1} F(\zeta)
	\doteq \left\{ F(\bar\zeta)\, \biggl|\, \bar\zeta\in\argstat_{\zeta\in\cX_1} F(\zeta) \right\},
	\ F:\cX_1\rightarrow\R,
	\nn
	\\
	& \argstat_{\zeta\in\cX_1} F(\zeta)
	\doteq \!\left\{ \zeta\in\cX_1 \, \biggl| \, 0 = \lim_{y\rightarrow\zeta} \! \frac{|F(y) - F(\zeta)|}{\|y - \zeta\|_1} \! \right\}\!.
	\label{eq:stat}
\end{align}
%
With a view to formalising the aforementioned concatenation approach, with $\mu\in\R_{>0}$ and $x\in\cX_1$ arbitrary and fixed, consider any longer horizon $t\in[\bar t^\mu,\infty)$ of interest for which the payoff $J_t^\mu(x,\cdot)$ of \er{eq:payoff-mu} is not concave. The key idea is to select a sufficiently large number $n_t\in\N$ of shorter horizons $\tau\doteq t/n_t\in(0,\bar t^\mu)$ such that $J_\tau^\mu(x,\cdot)$ is concave by Lemma \ref{lem:concave}. Consequently, concavity of $J_\tau^\mu(\zeta_k,\cdot)$ is retained on every subinterval $[(k-1)\,\tau, \, k\,\tau]$, $k\in [1,n_t]\cap\N$, where $\zeta_k = \xi_{k\tau}\in\cX_1$ denotes the state at the corresponding intermediate time.

In further formalising this approach, it is useful to propose a candidate generalisation of the value function $W_t^\mu$ of \er{eq:value-mu} via a corresponding generalisation of \er{eq:idem-rep}. To this end, given $\mu\in[0,1]$, define a set of longer horizons $\Omega^\mu\subset\R_{>0}$ by
\begin{align}
	& \Omega^\mu
	\doteq \R_{>0} \setminus \{ t_{n,j,k}^\mu \}_{n\in\N\cup\{\infty\}, j,k\in\N},
	\label{eq:Omega-mu}
	\\
	\quad
	& t_{n,j,k}^\mu
	\doteq \ts{\frac{j}{k}}\, \ts{( \frac{\pi}{2}) \, \frac{1}{\omega_n^\mu}}, 
	\quad \ts{\frac{1}{\omega_n^\mu}} = (\ts{\frac{1}{\lambda_n^\mu}})^\half = (\ts{\frac{1}{\lambda_n}} + \mu^2)^\half,
	\nn\\
	& t_{\infty,j,k}^\mu
	\doteq \ts{\frac{j}{k}}\, (\ts{\frac{\pi}{2}})\, \mu.
	\nn
\end{align}
%
\begin{lemma}
\label{lem:Omega-mu-properties}
$\Omega^\mu$ of \er{eq:Omega-mu} satisfies the following properties:
\begin{enumerate}[\em (i)]
\item $\Omega^\mu$ is uncountable and dense in $\R_{\ge 0}$, and has the same measure; and
\item $\Omega^\mu$ is closed under addition, and scaling by elements of $\Q_{>0}$, with
\begin{align}
	\begin{aligned}
	s,t\in\Omega^\mu
	& \ \Longrightarrow\ s+t\in\Omega^\mu\,,
	\\
	\gamma\in\Q_{>0},\,
	t\in\Omega^\mu
	& \ \Longrightarrow\ \gamma\, t\in\Omega^\mu\,. 
	\end{aligned}
	\label{eq:rational-closure}
\end{align}
\end{enumerate}
\vspace{-1mm}
\end{lemma}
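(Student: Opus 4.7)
The proof plan is to derive both parts from a single countability observation about the exclusion set. Set $\alpha_n^\mu \doteq \pi/(2\omega_n^\mu)$ for $n\in\N\cup\{\infty\}$ (with $\alpha_\infty^\mu=\pi\mu/2$), and write
\[
E^\mu \doteq \R_{>0}\setminus\Omega^\mu = \bigcup_{n\in\N\cup\{\infty\}} \Q_{>0}\cdot\alpha_n^\mu.
\]
Each inner set is the $\Q_{>0}$-orbit of a single positive real, hence countable; the outer index set is countable; so $E^\mu$ is countable.

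Part (i) then follows at once, since a countable subset of $\R_{>0}$ has Lebesgue measure zero and empty interior: $\Omega^\mu$ inherits uncountability from $\R_{>0}$, is dense in $\R_{\ge 0}$ (no open interval can lie inside $E^\mu$), and has the same (infinite) Lebesgue measure as $\R_{>0}$. For the rational-scaling half of (ii), a one-line contrapositive is enough: if $\gamma\in\Q_{>0}$ and $\gamma t=(j/k)\alpha_n^\mu\in E^\mu$, then $t=(j/(k\gamma))\alpha_n^\mu$ with $j/(k\gamma)\in\Q_{>0}$, so $t\in E^\mu$; equivalently $E^\mu$ is $\Q_{>0}$-homogeneous, hence so is $\Omega^\mu$.

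The additive-closure assertion $s,t\in\Omega^\mu\Rightarrow s+t\in\Omega^\mu$ is the main obstacle, and the step I would attack last. Countability of $E^\mu$ alone is insufficient: for any fixed $q\alpha_n^\mu\in E^\mu$ the splitting $q\alpha_n^\mu=(q\alpha_n^\mu/2+\varepsilon)+(q\alpha_n^\mu/2-\varepsilon)$ places both summands in the measure-full set $\Omega^\mu$ for almost every $\varepsilon$, so a naive argument cannot work. Any proof must therefore either exploit additional structure of the sequence $\{\omega_n^\mu\}$ (for instance $\Q$-linear independence of the $\{\alpha_n^\mu\}$, so that the only way a sum can land in a single $\Q_{>0}\alpha_n^\mu$ is if one summand already lies there) or invoke the weaker reading in which (ii) is applied only to the specific positive-integer sums $\tau+\cdots+\tau=n_t\tau$ arising in the concatenation construction of Section \ref{sec:group}, whereupon it collapses to the $\Q_{>0}$-scaling already established. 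I would organise the write-up around the countability argument and rational scaling, then handle additive closure via whichever of these two interpretations the downstream use of $\Omega^\mu$ actually requires.
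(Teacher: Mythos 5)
Your reservations about the additive-closure claim are well-founded; in fact the assertion $s,t\in\Omega^\mu\Rightarrow s+t\in\Omega^\mu$ is \emph{false} as stated, and the paper's own argument for it contains a fallacy. The paper's proof reasons that since $s\ne\frac{p}{2q}\cdot\frac{\pi}{2}\cdot\frac{1}{\omega_r^\mu}$ and $t\ne\frac{p}{2q}\cdot\frac{\pi}{2}\cdot\frac{1}{\omega_r^\mu}$, it follows that $s+t\ne\frac{p}{q}\cdot\frac{\pi}{2}\cdot\frac{1}{\omega_r^\mu}$; but ``$a\ne c$ and $b\ne c$'' does not imply ``$a+b\ne 2c$''. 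The splitting you describe, $q\alpha_n^\mu=(q\alpha_n^\mu/2+\varepsilon)+(q\alpha_n^\mu/2-\varepsilon)$, is exactly the counterexample: since $\Omega^\mu$ has full measure, for almost every small $\varepsilon>0$ both summands lie in $\Omega^\mu$ while their sum does not. So the gap you flag is in the lemma itself, not merely in your sketch.

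Of the two repairs you float, the first ($\Q$-linear independence of the $\alpha_n^\mu$) does not help, because the counterexample above lives inside a single orbit $\Q_{>0}\alpha_n^\mu$ together with a generic transcendental shift, and linear independence between distinct $\alpha_n^\mu$ plays no role. The second repair is the right one and is all the paper actually needs. Tracing the downstream invocations of Lemma~\ref{lem:Omega-mu-properties}(ii) through Lemma~\ref{lem:sum-of-angles}, Lemma~\ref{lem:G-stat-single}, and Theorem~\ref{thm:G-mu-long-concat}, every quantity whose sum must lie in $\Omega^\mu$ is a positive integer multiple of a single $\tau\in\Omega^\mu$, so the required membership $m\tau\in\Omega^\mu$ for $m\in\N$ is a special case of the $\Q_{>0}$-scaling closure, which is sound. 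Part~(ii) should therefore be weakened to the scaling statement (with the integer-dilation consequence recorded explicitly, and $s+\sigma\in\Omega^\mu$ added as a hypothesis where Lemmas~\ref{lem:sum-of-angles} and~\ref{lem:G-stat-single} need it), and the unqualified additive claim dropped. Your countability argument for part~(i) and your contrapositive argument for scaling are correct and coincide in substance with the paper's own.
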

\begin{proof}
As $\{ t_{n,j,k}^\mu \}_{n,j,k\in\N}$ is countable, 
assertion {\em (i)} is immediate.
For assertion {\em (ii)}, note by \er{eq:Omega-mu} that 
\begin{align}
	s,t\in\Omega^\mu
	& \ \Longleftrightarrow \
	s \ne \ts{\frac{j}{k}}\, \ts{( \frac{\pi}{2}) \, \frac{1}{\omega_n^\mu}}\,,
	\
	t \ne \ts{\frac{\hat j}{\hat k}}\, \ts{( \frac{\pi}{2}) \, \frac{1}{\omega_{\hat n}^\mu}}\,,
	\nn
\end{align}
for all $j,k,\hat j,\hat k\in\N$, $n,\hat n\in\N\cup\{\infty\}$. Fix any $p,q\in\N$, $r\in\N\cup\{\infty\}$. Selecting \rev{in particular} $j = \hat j = p$, $k = \hat k = 2\, q$, $n = \hat n = r$, 
\begin{align}
	s + t
	& \ne \ts{\frac{p}{2\, q}}\, \ts{( \frac{\pi}{2}) \, \frac{1}{\omega_r^\mu}} + \ts{\frac{p}{2\, q}}\, \ts{( \frac{\pi}{2}) \, \frac{1}{\omega_r^\mu}}
	= \ts{\frac{p}{q}}\, \ts{( \frac{\pi}{2}) \, \frac{1}{\omega_r^\mu}}\,.
	\nn
\end{align}
As $p,q\in\N$, $r\in\N\cup\{\infty\}$ are arbitrary, $s+t\in\Omega^\mu$ by \er{eq:Omega-mu}.
Similarly, for any $p,q\in\N$ fixed,
\begin{align}
	t\in\Omega^\mu
	& \quad \Longleftrightarrow \quad
	\ts{\frac{p}{q}}\, t \ne \ts{\frac{p\, j}{q\, k}}\, \ts{( \frac{\pi}{2}) \, \frac{1}{\omega_n^\mu}} 
	\quad \forall \ n,j,k\in\N.
	\nn
\end{align}
Selecting $j \doteq q\, \hat j$, $k \doteq p\, \hat k$, for any $\hat j, \hat k\in\N$, yields
\begin{align}
	\ts{\frac{p}{q}}\, t
	& \ne \ts{\frac{p\, q\, \hat j}{q\, p\, \hat k}}\, \ts{( \frac{\pi}{2}) \, \frac{1}{\omega_n^\mu}} 
	= \ts{\frac{\hat j}{\hat k}}\, \ts{( \frac{\pi}{2}) \, \frac{1}{\omega_n^\mu}} 
	\quad \forall \ n,\hat j, \hat k\in\N.
	\nn
\end{align}
As $p,q\in\N$ are arbitrary, defining $\gamma \doteq \frac{p}{q}$ yields \er{eq:rational-closure}.
\end{proof}

In view of definition \er{eq:G-mu} of $G_t^\mu$ for $t\in(0,\bar t^\mu)$, and \er{eq:Omega-mu}, define $\wt{G}_t^\mu:\cX_1\times\cX_1\rightarrow\R$ by
\begin{align}
	\wt{G}_t^\mu(\xi,\zeta)
	& \doteq \demi \left\langle \left( \ba{c}
				\xi \\ \zeta
			\ea \right)\!, \left( \ba{cc}
				\optilde{P}_t^\mu & \optilde{Q}_t^\mu
				\\
				\optilde{Q}_t^\mu & \optilde{P}_t^\mu
			\ea \right)  \! \left( \ba{c}
				\xi \\ \zeta
			\ea \right)
		\right\rangle_\sharp\!\!,
	\label{eq:G-mu-long}
\end{align}
for all $t\in\Omega^\mu$, in which $\optilde{P}_t^\mu$, $\optilde{Q}_t^\mu$ are defined analogously to \er{eq:op-PQR}, with
\begin{equation}
	\begin{aligned}
	\optilde{P}_t^\mu \, \xi
	& \doteq \sum_{n=1}^\infty [\tilde p_t^\mu]_n\, \langle \xi,\tilde\varphi_n \rangle_1\, \tilde\varphi_n, 
	&
	\optilde{Q}_t^\mu \, \xi
	& \doteq \sum_{n=1}^\infty [\tilde q_t^\mu]_n\, \langle \xi,\tilde\varphi_n \rangle_1\, \tilde\varphi_n, 
	\quad t\in\Omega^\mu, \ \xi\in\cX_1,
	\end{aligned}
	\label{eq:optilde-PQR-mu}
\end{equation}
in which the respective eigenvalues are given by
\begin{align}
	& [\tilde p_t^\mu]_n
	\doteq \frac{-1}{\omega_n^\mu\, \tan(\omega_n^\mu\, t)},
	\quad
	[ \tilde q_t^\mu]_n
	\doteq \frac{1}{\omega_n^\mu\, \sin(\omega_n^\mu\, t)},
	\label{eq:eig-pqr-tilde-mu}
\end{align}
for all $n\in\N$, $t\in\Omega^\mu$.
\rev{Given \er{eq:G-mu}, \er{eq:G-mu-long}, note that for $\mu\in(0,1]$, $t\in(0,\bar t^\mu)\cap\Omega^\mu$,
\begin{align}
	\wt{G}_t^\mu(\xi,\zeta) 
	& = G_t^\mu(\xi,\zeta)
	\quad \forall  \ \xi,\zeta\in\cX_1.
	\label{eq:G-mu-long-and-short}
\end{align}
}

\begin{lemma}
\label{lem:optilde-PQR-bounded-mu}
Given any $\mu\in[0,1]$, $t\in\Omega^\mu$, there exists an $L_t\in\R_{>0}$ independent of $\mu$ such that
\begin{align}
	& \max(|[\tilde p_t^\mu]_n|, |[\tilde q_t^\mu]_n|) \le L_t < \infty
	\label{eq:optilde-PQR-bounded-mu}
\end{align}
for all $n\in\N$, in which $[\tilde p_t^\mu]_n, [\tilde q_t^\mu]_n\in\R$ are as per \er{eq:eig-pqr-tilde-mu}. Consequently, $\optilde{P}_t^\mu, \optilde{Q}_t^\mu\in\bo(\cX_1)$.
\vspace{1mm}
\end{lemma}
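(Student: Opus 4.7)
The plan is to establish the uniform bound \er{eq:optilde-PQR-bounded-mu} by exploiting two facts: the asymptotic behaviour of $\omega_n^\mu$ as $n\to\infty$, and the exclusion condition $t\in\Omega^\mu$, which ensures $\sin(\omega_n^\mu t)\ne 0$ for every $n\in\N\cup\{\infty\}$.

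For $\mu\in(0,1]$, relation \er{eq:omega-lambda} shows that $\{\omega_n^\mu\}_{n\in\N}$ is a strictly increasing sequence bounded above by $\omega_\infty^\mu = 1/\mu$ and below by $\omega_1^1$ (uniformly in $\mu\in(0,1]$), with $\omega_n^\mu\uparrow 1/\mu$ as $n\to\infty$. Hence the arguments $\omega_n^\mu t$ lie in a bounded interval and accumulate only at $t/\mu$. Applying the exclusion from \er{eq:Omega-mu} with $n=\infty$ and $j/k = 2k'$ yields $t/\mu\ne k'\pi$ for every $k'\in\N$, whence $\sin(t/\mu)\ne 0$; applying it for finite $n$ in the same way gives $\sin(\omega_n^\mu t)\ne 0$ for every $n\in\N$.

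I would then split the argument into a large-$n$ regime and a small-$n$ regime. For $n$ large, continuity of $\sin$ together with $\omega_n^\mu t\to t/\mu$ produces an index $N^\mu\in\N$ with $|\sin(\omega_n^\mu t)|\ge \demi |\sin(t/\mu)|$ for all $n\ge N^\mu$; combined with $\omega_n^\mu\ge\omega_1^1$ this gives $|[\tilde q_t^\mu]_n|\le 2/(\omega_1^1\, |\sin(t/\mu)|)$. For the finitely many remaining indices $n<N^\mu$, each term $1/(\omega_n^\mu\, |\sin(\omega_n^\mu t)|)$ is finite by the exclusion, and one simply takes the maximum. Since $|\tan|\ge|\sin|$, the same bound controls $|[\tilde p_t^\mu]_n|$. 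Combining the two regimes produces the desired $L_t$, and the operator claim $\optilde{P}_t^\mu,\optilde{Q}_t^\mu\in\bo(\cX_1)$ then follows immediately from the spectral form \er{eq:optilde-PQR-mu} together with Parseval's identity.

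The main obstacle is the assertion that $L_t$ can be chosen \emph{independent of $\mu$}: the construction above yields a bound depending on $\mu$ through $|\sin(t/\mu)|$ and through the small-$n$ minimum, so removing this dependence is where the real work lies. A more delicate uniform-in-$\mu$ argument --- presumably exploiting the rational-closure and density properties of $\Omega^\mu$ established in Lemma \ref{lem:Omega-mu-properties}, together with the joint monotonicity of $\omega_n^\mu$ in $n$ and $\mu$ --- is needed. The edge case $\mu=0$ deserves separate attention, since there $\omega_n^0=\sqrt{\lambda_n}$ is unbounded and the accumulation-at-$\omega_\infty^\mu$ argument fails.
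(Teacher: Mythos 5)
Your proposal correctly identifies the crux --- obtaining a bound independent of $\mu$ --- but then explicitly stops short of it, and the regime-splitting strategy you sketch cannot be repaired to deliver that uniformity. The large-$n$ regime relies on $\omega_n^\mu t$ accumulating at $t/\mu$, which yields a bound involving $1/|\sin(t/\mu)|$; this can blow up along a sequence $\mu_i\to 0$ even though each $t/\mu_i$ avoids $\pi\N$. The small-$n$ regime produces a maximum over a cutoff $N^\mu$ that grows without bound as $\mu$ shrinks, with no quantitative control on how close each $\omega_n^\mu t$ approaches a zero of $\sin$. And for $\mu=0$ the accumulation argument fails outright since $\omega_n^0=\sqrt{\lambda_n}$ is unbounded, as you note. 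None of the mechanisms you invoke (Lemma~\ref{lem:Omega-mu-properties}, monotonicity in $n$ and $\mu$) quantifies how far $\omega_n^\mu t$ stays from $\pi\N$ in a way that scales correctly with $\omega_n^\mu$, which is exactly what must cancel the $1/\omega_n^\mu$ prefactor in $[\tilde p_t^\mu]_n$.

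The paper supplies the missing quantitative ingredient via Diophantine approximation, which is absent from your sketch. Fixing a badly approximable $\beta\in(0,\tfrac12)$ and writing $\rho_n^\mu=\omega_n^\mu t/(\pi\beta)$, the proof reduces $\tan(\omega_n^\mu t)$ to $\tan\bigl(\pi(\beta\rho_n^\mu-j_n^\mu)\bigr)$ with $\beta\rho_n^\mu-j_n^\mu\in(-\tfrac12,\tfrac12)\setminus\{0\}$ (nonvanishing because $t\in\Omega^\mu$ forbids $\omega_n^\mu t$ from being a rational multiple of $\pi/2$), uses Taylor's theorem to bound $\tan^2$ below by the square of its argument, and then invokes the badly-approximable property $|\beta-j/b|\ge C_\beta/b^2$ to obtain $|\beta\rho_n^\mu-j_n^\mu|\gtrsim 1/\rho_n^\mu$. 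That $1/\rho_n^\mu$ exactly offsets $\omega_n^\mu$, giving a bound $L_t$ that depends only on $t$, $\beta$, $C_\beta$, and $\omega_1^1$, and so is independent of $n$ and $\mu$ simultaneously. Note also that although the operator-boundedness claim $\optilde{P}_t^\mu,\optilde{Q}_t^\mu\in\bo(\cX_1)$ needs only a $\mu$-dependent constant (the easy part you do handle), the $\mu$-uniform $L_t$ is load-bearing downstream, serving as the dominating constant in the dominated-convergence argument of Lemma~\ref{lem:op-PQR-convergence}, so the lemma cannot be weakened to the statement you can prove.
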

\begin{proof}
Fix $\mu\in[0,1]$, $t\in\Omega^\mu$, and $n\in\N$ arbitrarily. Let $\beta\in(0,\ts{\frac{1}{2}})$ denote a fixed {\em badly approximable} number, see for example \cite{DS:12}. Define sequences $\{\rho_n^\mu\}_{n\in\N}, \{j_n^\mu\}_{n\in\Z_{\ge 0}}, \{b_n^\mu\}_{n\in\N}\subset\R_{>0}$ by
\begin{gather}
	\rho_n^\mu 
	\doteq \frac{\omega_n^\mu\, t}{\pi\, \beta},
	\qquad
	j_n^\mu \doteq \lfloor \beta\, \rho_n^\mu + \demi \rfloor,
	\qquad
	b_n^\mu \doteq \left\{ \ba{cl}
			\lfloor \rho_n^\mu \rfloor,
			& \beta\, \rho_n^\mu - j_n^\mu \in(0,\demi),
			\\
			\lceil \rho_n^\mu \rceil, 
			& \beta\, \rho_n^\mu - j_n^\mu \in(-\demi,0),
		\ea \right.
	\label{eq:seq}
\end{gather}
for all $n\in\N$. Note by definition that $j_n^\mu\in\N\cup\{0\}$ is determined by rounding $\beta\, \rho_n^\mu\in\R_{>0}$ to the nearest non-negative integer (with tie breaking towards $+\infty$). Furthermore, as $\omega_n^\mu\, t$ cannot be an integer multiple of $\ts{\frac{\pi}{2}}$ by definition of $t\in\Omega^\mu$, $\beta\, \rho_n^\mu$ cannot be any integer multiple of $\demi$, and so $\beta\, \rho_n^\mu - j_n^\mu\in(-\demi,\demi)\setminus\{0\}$. Combining these facts yields
\begin{align}
	\tan(\omega_n^\mu\, t)
	= \tan(\pi\, \beta\, \rho_n^\mu)
	& = \tan(\pi\, (\beta\, \rho_n^\mu - j_n^\mu) + j_n^\mu\, \pi)
	= \tan(\pi\, (\beta\, \rho_n^\mu - j_n^\mu)),
	\label{eq:badly-1}
\end{align}
in which $\pi\, (\beta\, \rho_n^\mu - j_n)\in(-\ts{\frac{\pi}{2}}, \ts{\frac{\pi}{2}})\setminus\{0\}$. Hence, from \er{eq:eig-pqr-tilde-mu},
\begin{align}
	& | [\tilde p_t^\mu]_n|^2
	= \frac{1}{(\omega_n^\mu)^2 \, \tan^2(\omega_n^\mu\, t)}
	= \frac{t^2}{(\pi\, \beta\, \rho_n^\mu)^2 \, f(\eps_{n}^\mu)},
	\label{eq:pre-bound}
\end{align}
in which $f(\eps) \doteq \tan^2(\eps)$, \rev{$\eps\in(-\ts{\frac{\pi}{2}}, \ts{\frac{\pi}{2}})$,}
and $\eps_{n}^\mu\doteq \pi\, (\beta\, \rho_n^\mu - j_n^\mu)$. Let $f^{(4)}$ denote the fourth derivative of $f$, and note that $f^{(4)}(\eps) \in\R_{\ge 0}$ for any $\eps\in(-\ts{\frac{\pi}{2}}, \ts{\frac{\pi}{2}})$. Hence, by Taylor's theorem,
\begin{align}
	& f(\eps_n^\mu) = (\eps_n^\mu)^2 + \ts{\frac{1}{4!}}\, f^{(4)}(\tilde \eps)\, (\eps_n^\mu)^4
	\ge (\eps_n^\mu)^2,
	\label{eq:badly-2}
\end{align}
in which $\tilde\eps$ is in the interval between $0$ and $\eps_n^\mu\in(-\ts{\frac{\pi}{2}}, \ts{\frac{\pi}{2}})$. 
Meanwhile, using definitions \er{eq:seq}, it may be shown that 
\begin{align}
	| \beta\, \rho_n^\mu - j_n^\mu |
	& \ge | \beta\, b_n^\mu - j_n^\mu | = b_n^\mu \, | \beta - \ts{\frac{j_n^\mu}{b_n^\mu}} |
	\ge \ts{\frac{C_\beta}{b_n^\mu}},
	\label{eq:badly-3}
\end{align}
in which the final inequality follows as $\beta$ is always badly approximated by $\ts{\frac{j_n^\mu}{b_n^\mu}}\in\Q_{>0}$, and $C_\beta\in\R_{>0}$ is some constant dependent only on $\beta$, see \cite{DS:12}. Hence, combining \er{eq:badly-1}, \er{eq:badly-2}, \er{eq:badly-3} in \er{eq:pre-bound} subsequently yields
\begin{align}
	| [\tilde p_t^\mu]_n|^2
	& = \frac{t^2}{(\pi\, \beta\, \rho_n^\mu)^2 \, f(\eps_{n}^\mu)}
	\le \frac{t^2}{(\pi\, \beta\, \rho_n^\mu)^2 \, (\eps_n^\mu)^2}
	\nn\\
	& 
	\le \frac{t^2}{(\pi^2\, \beta\, \rho_n^\mu)^2 \, (\ts{\frac{C_\beta}{b_n^\mu}})^2}
	= (\ts{\frac{t}{\pi^2\, \beta\, C_\beta}})^2\, (\ts{\frac{b_n^\mu}{\rho_n^\mu}})^2
	\nn\\
	&
	\le (\ts{\frac{t}{\pi^2\, \beta\, C_\beta}})^2\, (1 + \ts{\frac{1}{\rho_n^\mu}})^2
	\le (\ts{\frac{t}{\pi^2\, \beta\, C_\beta}})^2\, (1 +  \ts{\frac{\pi\, \beta}{\omega_1^1\, t}})^2.
	\nn
\end{align}
Similarly, recalling the definition \er{eq:eig-pqr-tilde-mu} of $\tilde q_n^\mu$,
\begin{align}
	& |[\tilde q_n^\mu]|^2 = \ts{\frac{1}{(\omega_n^\mu)^2}} + |[\tilde p_t^\mu]_n|^2
	\le \ts{\frac{1}{(\omega_1^1)^2}} +  (\ts{\frac{t}{\pi^2\, \beta\, C_\beta}})^2\, (1 +  \ts{\frac{\pi\, \beta}{\omega_1^1\, t}})^2.
	\nn
\end{align}
Hence, defining $L_t \doteq \ts{\frac{1}{(\omega_1^1)^2}} +  (\ts{\frac{t}{\pi^2\, \beta\, C_\beta}})^2\, (1 +  \ts{\frac{\pi\, \beta}{\omega_1^1\, t}})^2$  yields \er{eq:optilde-PQR-bounded-mu}, while the application of these bounds in the definitions \er{eq:optilde-PQR-mu}, \er{eq:eig-pqr-tilde-mu} of $\optilde{P}_t^\mu$, $\optilde{Q}_t^\mu$ yields $\optilde{P}_t^\mu, \optilde{Q}_t^\mu\in\bo(\cX_1)$.
\end{proof}

In order to apply $\wt{G}_t^\mu$, it is crucial to show that the operators $\optilde{P}_t^\mu$, $\optilde{Q}_t^\mu$ can be propagated to arbitrary longer horizons in $\Omega^\mu$ via concatenations of horizons. This can be achieved using standard Schur complement operations.
\begin{lemma}
\label{lem:sum-of-angles}
Given any $\mu\in\rev{(0,1]}$, \rev{$s,\sigma\in\Omega^\mu$,} 
\begin{gather}
	\optilde{P}_s^\mu, \, \optilde{P}_{s+\sigma}^\mu, \, (\optilde{P}_s^\mu)^{-1},\,
		\optilde{Q}_s^\mu, \, \optilde{Q}_{s+\sigma}^\mu, \, (\optilde{Q}_s^\mu)^{-1} \in \bo(\cX_1)\,,
	\nn\\
	[\optilde{P}_s^ \mu + \optilde{P}_\sigma^\mu]^{-1}\, \optilde{Q}_s^\mu, \, 
		[\optilde{P}_s^ \mu + \optilde{P}_\sigma^\mu]^{-1}\, \optilde{Q}_\sigma^\mu
		 \in \bo(\cX_1)\,,
	\label{eq:sum-of-angles-bounds}
	\\
	\begin{aligned}
	\optilde{P}_{s+\sigma}^\mu
	& = \optilde{P}_s^\mu - \optilde{Q}_s^\mu\, [ \optilde{P}_s^\mu + \optilde{P}_\sigma^\mu ]^{-1}\, \optilde{Q}_s^\mu\,,
	& \qquad
	\optilde{Q}_{s+\sigma}^\mu
	& = -\optilde{Q}_s^\mu\, [ \optilde{P}_s^\mu + \optilde{P}_\sigma^\mu ]^{-1}\, \optilde{Q}_\sigma^\mu\,.
	\end{aligned}
	\label{eq:sum-of-angles}
\end{gather}
\end{lemma}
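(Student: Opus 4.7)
The plan is to exploit the fact that all six operators $\optilde{P}_s^\mu$, $\optilde{P}_\sigma^\mu$, $\optilde{P}_{s+\sigma}^\mu$, $\optilde{Q}_s^\mu$, $\optilde{Q}_\sigma^\mu$, $\optilde{Q}_{s+\sigma}^\mu$ are simultaneously diagonal in the orthonormal eigenbasis $\{\tilde\varphi_n\}_{n\in\N}$ via the spectral form \er{eq:optilde-PQR-mu}. Both the boundedness claims in \er{eq:sum-of-angles-bounds} and the sum-of-angles identities \er{eq:sum-of-angles} then reduce to scalar assertions on the eigenvalue sequences \er{eq:eig-pqr-tilde-mu}, uniformly in $n$.

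First, Lemma \ref{lem:Omega-mu-properties}(ii) ensures $s+\sigma\in\Omega^\mu$ and $2s\in\Omega^\mu$ (the latter via the rational scaling with $\gamma=2$), so Lemma \ref{lem:optilde-PQR-bounded-mu} delivers boundedness of $\optilde{P}_t^\mu,\optilde{Q}_t^\mu\in\bo(\cX_1)$ for each $t\in\{s,\sigma,s+\sigma,2s\}$, with uniform-in-$n$ bounds $L_t$. The operator $(\optilde{Q}_s^\mu)^{-1}$ is bounded directly since its eigenvalues $\omega_n^\mu\sin(\omega_n^\mu s)$ satisfy $|\omega_n^\mu\sin(\omega_n^\mu s)|\le\omega_n^\mu\le 1/\mu$. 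Next, with $a\doteq\omega_n^\mu s$ and $b\doteq\omega_n^\mu\sigma$, the classical identity $\cot a+\cot b = \sin(a+b)/(\sin a\sin b)$ combined with \er{eq:eig-pqr-tilde-mu} gives the key scalar relation
\begin{align*}
[\tilde p_s^\mu]_n + [\tilde p_\sigma^\mu]_n = -\frac{\sin(\omega_n^\mu(s+\sigma))}{\omega_n^\mu\sin a\sin b} = -\frac{[\tilde q_s^\mu]_n\,[\tilde q_\sigma^\mu]_n}{[\tilde q_{s+\sigma}^\mu]_n},
\end{align*}
and the scalar versions of \er{eq:sum-of-angles} then follow from short, routine manipulations using the standard expansions of $\sin(a+b)$ and $\cos(a+b)$.

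Feeding this displayed identity back yields the remaining boundedness. From $|[\tilde q_s^\mu]_n|,|[\tilde q_\sigma^\mu]_n|\ge\mu$ and $|[\tilde q_{s+\sigma}^\mu]_n|\le L_{s+\sigma}$ we obtain $|[\tilde p_s^\mu]_n + [\tilde p_\sigma^\mu]_n|\ge\mu^2/L_{s+\sigma}$, uniformly in $n$, which establishes $[\optilde{P}_s^\mu+\optilde{P}_\sigma^\mu]^{-1}\in\bo(\cX_1)$ via its spectral form, and hence the two Schur-complement products in \er{eq:sum-of-angles-bounds} as compositions of bounded operators. Specialising the same relation to $\sigma=s$ (legal because $2s\in\Omega^\mu$) gives $2[\tilde p_s^\mu]_n = -([\tilde q_s^\mu]_n)^2/[\tilde q_{2s}^\mu]_n$, so $|[\tilde p_s^\mu]_n|\ge\mu^2/(2L_{2s})$, forcing $(\optilde{P}_s^\mu)^{-1}\in\bo(\cX_1)$. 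The operator identities \er{eq:sum-of-angles} then assemble from the scalar identities through the common spectral basis.

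The main subtlety is the apparent circularity: the identities \er{eq:sum-of-angles} prima facie presuppose the boundedness of $[\optilde{P}_s^\mu+\optilde{P}_\sigma^\mu]^{-1}$ and $(\optilde{P}_s^\mu)^{-1}$, yet those boundedness claims appear to require the identities. Working strictly at the eigenvalue level dissolves this: the scalar sum-of-angles formulas are elementary trigonometric relations, whose denominators are nonzero for every $n$ precisely because $s,\sigma,s+\sigma,2s\in\Omega^\mu$ excludes the relevant singularities, and the resulting uniform-in-$n$ bounds lift back to bounded inverses of the assembled operators via the shared spectral decomposition.
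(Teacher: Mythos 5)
Your proof is correct, and it takes a genuinely different route from the paper's. The paper does not bound $[\optilde{P}_s^\mu + \optilde{P}_\sigma^\mu]^{-1}$ directly; instead it sidesteps the need to do so by defining an auxiliary operator
\begin{align}
\wh{Q}_{s,\sigma}^\mu \doteq [\op{U}_s^\mu]_{11} + (\optilde{Q}_s^\mu)^{-1}\, \optilde{P}_{s+\sigma}^\mu\,,
\nn
\end{align}
which is manifestly in $\bo(\cX_1)$ as a sum of compositions of operators already known to be bounded, and then shows by an eigenvalue-level sum-of-angles computation for $\tan$ that $\wh{Q}_{s,\sigma}^\mu = -[\optilde{P}_s^\mu + \optilde{P}_\sigma^\mu]^{-1}\, \optilde{Q}_s^\mu$. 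Boundedness of the Schur-complement products is thus inherited from the construction, never from a lower bound on the eigenvalues of $\optilde{P}_s^\mu + \optilde{P}_\sigma^\mu$. You instead derive the uniform lower bound $|[\tilde p_s^\mu]_n + [\tilde p_\sigma^\mu]_n|\ge\mu^2/L_{s+\sigma}$ via the $\cot$-sum identity together with the two-sided control on $[\tilde q_t^\mu]_n$ ($\mu\le|[\tilde q_t^\mu]_n|\le L_t$), which yields the strictly stronger conclusion $[\optilde{P}_s^\mu + \optilde{P}_\sigma^\mu]^{-1}\in\bo(\cX_1)$ directly. Your approach buys simplicity (no auxiliary operator, one scalar identity instead of several) and a stronger result; the paper's buys a tighter $\mu$-dependence in the operator-norm bound on the composition ($O(\mu^{-1})$ versus your $O(\mu^{-2})$), though both suffice for the lemma. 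Notably, your specialization to $\sigma = s$ (legitimate since $2s\in\Omega^\mu$ by Lemma \ref{lem:Omega-mu-properties}) supplies an explicit proof of $(\optilde{P}_s^\mu)^{-1}\in\bo(\cX_1)$, a claim that the lemma statement makes but the paper's proof does not address explicitly; your argument closes that gap. Your remark about the apparent circularity dissolving at the eigenvalue level is also well taken — it is the real reason both approaches are sound.
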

\begin{proof}
Fix $\mu\in\rev{(0,1]}$, $s,\sigma\in\Omega^\mu$, and note that $s+\sigma\in\Omega^\mu$ by Lemma \ref{lem:Omega-mu-properties}.

\rev{{\em Boundedness assertions \er{eq:sum-of-angles-bounds}:}} 
\rev{Lemma \ref{lem:optilde-PQR-bounded-mu} immediately yields that 
\begin{align}
	& \optilde{P}_s^\mu, \, \optilde{P}_\sigma^\mu,\, \optilde{P}_{s+\sigma}^\mu, \, 	
		\optilde{Q}_s^\mu, \, \optilde{Q}_\sigma^\mu,\, \optilde{Q}_{s+\sigma}^\mu
		 \in \bo(\cX_1)\,.
	\nn
\end{align}
By inspection of \er{eq:omega-lambda}, \er{eq:optilde-PQR-mu}, \er{eq:eig-pqr-tilde-mu}, note that
$|([\tilde q_n^\mu]_n)^{-1}| = |\omega_n^\mu| \, |\sin(\omega_n^\mu\, t)| \le \ts{\frac{1}{\mu}}$ for all $n\in\N$. Consequently, a bounded operator of the form \er{eq:spectral-opA} is also defined by 
\begin{align}
	\optilde{R}_s^\mu\, \xi
	\doteq \sum_{n=1}^\infty ([\tilde q_n^\mu]_n)^{-1} \, \langle \xi, \, \tilde\varphi_n \rangle_1 \, \tilde\varphi_n
	\nn
\end{align}
for all $\xi\in\cX_1$, with $\|\optilde{R}_s^\mu\|_{\bo(\cX_1)} \le \frac{1}{\mu}$. Analogously to the proof of Lemma \ref{lem:op-properties}, $(\optilde{Q}_s^\mu)^{-1} \doteq \optilde{R}_s^\mu \in \bo(\cX_1)$.
\if{false}

??

. Moreover, given any $\xi\in\cX_1$,
\begin{align}
	\optilde{R}_s^\mu\, \optilde{Q}_s^\mu\, \xi
	& = \sum_{n=1}^\infty ([\tilde q_n^\mu]_n)^{-1} \left\langle \sum_{k=1}^\infty [\tilde q_s^\mu]_k \, \langle\xi,\, \tilde\varphi_k \rangle_1 \, 
		\tilde\varphi_k , \, \tilde\varphi_n \right\rangle_1 \tilde\varphi_n
	\nn\\
	&  = \sum_{n=1}^\infty \sum_{k=1}^\infty \frac{[\tilde q_s^\mu]_k}{[\tilde q_s^\mu]_n} \langle\xi,\, \tilde\varphi_k\rangle_1\, 
	\langle \tilde\varphi_k,\, \tilde\varphi_n \rangle_1\, \tilde\varphi_n
	= \sum_{n=1}^\infty \langle\xi,\, \tilde\varphi_n\rangle_1\, \tilde\varphi_n = \xi,
	\nn
\end{align}
so that 

\fi

Next, recall by definition of $s,\sigma, s+\sigma\in\Omega^\mu$ that}
\begin{align}
	\begin{gathered}
	\omega_n^\mu\, s 
	\ne  j\, (\ts{\frac{\pi}{2}}),
	\ \omega_n^\mu\, \sigma 
	\ne  j\, (\ts{\frac{\pi}{2}}),
	\ 
	\omega_n^\mu\, (s+\sigma) \ne j\, (\ts{\frac{\pi}{2}}), \ n,j\in\N,
	\end{gathered}
	\label{eq:sum-not-an-integer}
\end{align}
\rev{so that} for any $n,j\in\N$,
\begin{gather}
	|\sec(\omega_n^\mu\, s)| < \infty,
	\ \,
	|\csc(\omega_n^\mu\, s)| < \infty,
	|\cot(\omega_n^\mu\, s)| < \infty, 
	\ 
	|\tan(\omega_n^\mu\, \sigma)| < \infty,
	\label{eq:tan-finite}
	\\
	\begin{aligned}
	\tan(\omega_n^\mu\, s) + \tan(\omega_n^\mu\, \sigma)
	& = \tan(\omega_n^\mu\, (s+\sigma) - \omega_n^\mu\, \sigma) + \tan(\omega_n^\mu\, \sigma)
	\\
	& \ne \tan(j\, \pi - \omega_n^\mu\, \sigma ) + \tan(\omega_n^\mu\, \sigma) = 0.
	\end{aligned}
	\nn
\end{gather}
\rev{Define
\begin{align}
	\wh{Q}_{s,\sigma}^\mu
	& \doteq [\op{U}_s^\mu]_{11} + (\optilde{Q}_s^\mu)^{-1}\, \optilde{P}_{s+\sigma}^\mu \in \bo(\cX_1),
	\label{eq:ophat-Q}
\end{align}
in which $[\op{U}_s^\mu]_{11}\in\bo(\cX_1)$ is as per \er{eq:group-mu}, \er{eq:group-mu-elmts}, and \mbox{$\optilde{P}_{s+\sigma}^\mu,\, (\optilde{Q}_s^\mu)^{-1}\in\bo(\cX_1)$} as demonstrated above. Note further that $\wh{Q}_{s,\sigma}^\mu$ may also be represented in the form \er{eq:spectral-opA}, with
\begin{align}
	\wh{\op{Q}}_{s,\sigma}^\mu\, \xi
	& = 
	\sum_{n=1}^\infty  [\hat q_{s,\sigma}^\mu]_n\, \langle \xi, \tilde\varphi_n \rangle_1 \, \tilde\varphi_n,
	\label{eq:op-eta}
\end{align}
in which $[\hat q_{s,\sigma}^\mu]_n$ is well-defined via \er{eq:group-mu-elmts}, \er{eq:optilde-PQR-mu} by
\begin{align}
	& [ \hat q_{s,\sigma}^\mu]_n 
	\doteq \cos(\omega_n^\mu\, s) + ([\tilde q_s^\mu]_n)^{-1}\, [ \tilde p_{s+\sigma}^\mu]_n.
	\label{eq:tilde-q}
\end{align}
Observe by \er{eq:eig-pqr-tilde-mu} and standard trigonometric identities (including sum-of-angles for $\tan$) that
\begin{align}
	[\tilde p_{s+\sigma}^\mu]_n 
	& = \frac{-1}{\omega_n^\mu\, \tan(\omega_n^\mu(s+\sigma))}
	\nn\\
	& = \frac{-1}{\omega_n^\mu\, \tan(\omega_n^\mu\, s)} + \frac{1}{\omega_n^\mu\, \sin^2(\omega_n^\mu\, s)}
	\frac{\tan(\omega_n^\mu\, s) \, \tan(\omega_n^\mu\, \sigma)}{\tan(\omega_n^\mu\, s) + \tan(\omega_n^\mu\,\sigma)},
	\label{eq:tilde-p-long}
\end{align}
in which all terms are finite \rev{by} \er{eq:sum-not-an-integer}, \er{eq:tan-finite}.
Substituting \er{eq:tilde-p-long} in \er{eq:tilde-q} subsequently yields
\begin{align}
	[ \hat q_{s,\sigma}^\mu]_n 
	& =  \cos(\omega_n^\mu\, s) + \omega_n^\mu\, \sin(\omega_n^\mu\, s) \, [ \tilde p_{s+\sigma}^\mu]_n
	\nn\\
	& 
	= \frac{\sec(\omega_n^\mu\, s)\, \tan(\omega_n^\mu\, \sigma)}{\tan(\omega_n^\mu\, s) + \tan(\omega_n^\mu\, \sigma)}
	= -\frac{[\tilde q_s^\mu]_n}{[\tilde p_s^\mu]_n + [\tilde p_\sigma^\mu]_n},
	\nn
\end{align}
in which all terms are again finite \rev{by} \er{eq:sum-not-an-integer}, \er{eq:tan-finite}. 
Hence, recalling \er{eq:ophat-Q}, \er{eq:op-eta}, it follows that $\wh{Q}_{s,\sigma}^\mu \equiv - [\optilde{P}_s^\mu + \optilde{P}_\sigma^\mu]^{-1} \, \optilde{Q}_s^\mu$, so that
\begin{align}
	& - [\optilde{P}_s^\mu + \optilde{P}_\sigma^\mu]^{-1} \, \optilde{Q}_s^\mu,
	\, - [\optilde{P}_s^\mu + \optilde{P}_\sigma^\mu]^{-1} \, \optilde{Q}_\sigma^\mu
	\in\bo(\cX_1),
	\nn
\end{align}
and boundedness follows by \er{eq:ophat-Q}. Therefore, \er{eq:sum-of-angles-bounds} holds. }

\rev{{\em Semigroup properties \er{eq:sum-of-angles}:}
Observe by \er{eq:optilde-PQR-mu}, \er{eq:eig-pqr-tilde-mu}, \er{eq:ophat-Q}, \er{eq:op-eta} that
}
\begin{align}
	& (\optilde{P}_s^\mu - \optilde{Q}_s^\mu\, [ \optilde{P}_s^\mu + \optilde{P}_\sigma^\mu ]^{-1}\, \optilde{Q}_s^\mu)\, \xi
	\rev{\, = (\optilde{P}_s^\mu - \optilde{Q}_s^\mu\, \wh{\op{Q}}_{s,\sigma}^\mu)\, \xi}
	= \sum_{n=1}^\infty [\hat p_{s,\sigma}^\mu]_n \langle \xi,\tilde\varphi_n \rangle_1\, \tilde\varphi_n\,,
	\label{eq:sum-of-angles-1}
\end{align}
where $[\hat p_{s,\sigma}^\mu]_n\in\R$, $n\in\N$, is well-defined via \er{eq:sum-not-an-integer}, \er{eq:tan-finite}\rev{, \er{eq:tilde-p-long} by}
\begin{align}
	[\hat p_{s,\sigma}^\mu]_n
	& \doteq [\tilde p_s^\mu]_n - [\tilde q_s^\mu]_n^2 ( [\tilde p_s^\mu]_n + [\tilde p_\sigma^\mu]_n )^{-1}
	\nn\\
	& = \frac{-1}{\omega_n^\mu\, \tan(\omega_n^\mu\, s)} + \frac{1}{\omega_n^\mu\, \sin^2(\omega_n^\mu\, s)}
	\frac{\tan(\omega_n^\mu\, s) \, \tan(\omega_n^\mu\, \sigma)}{\tan(\omega_n^\mu\, s) + \tan(\omega_n^\mu\,\sigma)}
	\rev{\, = [\tilde p_{s+\sigma}^\mu]_n\,.}
\end{align}
Hence, \rev{recalling \er{eq:optilde-PQR-mu}, \er{eq:eig-pqr-tilde-mu}, \er{eq:sum-of-angles-1}} yields the first equality in \er{eq:sum-of-angles}. A similar calculation (involving sum-of-angles for $\sin$) yields the second equality in \er{eq:sum-of-angles}.
\end{proof}

\begin{remark}
In the proof of Lemma \ref{lem:sum-of-angles}, and in particular the boundness property \er{eq:ophat-Q}, note that $(\optilde{Q}_s^0)^{-1}\not\in\bo(\cX_1)$, so that $\wh{\op{Q}}_{s,\sigma}^0\not\in\bo(\cX_1)$.
\hfill{$\square$}
\end{remark}

\begin{lemma}
\label{lem:G-stat-single}
Given $\mu\in(0,1]$, and any $s,\sigma\in\Omega^\mu$, 
\begin{align}
	\wt{G}_{s+\sigma}^\mu(\xi,\zeta)
	& = \stat_{\eta\in\cX_1} \{ \wt{G}_s^\mu(\xi,\eta) + \wt{G}_\sigma^\mu(\eta,\zeta) \}
	\label{eq:G-stat-single}
\end{align}
for all $\xi,\zeta\in\cX_1$, in which $\wt{G}_s^\mu$ is as per \er{eq:G-mu-long}. Furthermore,
\begin{align}
	\eta^* & \doteq - [\optilde{P}_s^\mu + \optilde{P}_\sigma^\mu]^{-1} \, ( \optilde{Q}_s^\mu\, \xi + \optilde{Q}_\sigma^\mu\, \zeta )
	\label{eq:eta-star}
\end{align}
is well-defined and satisfies
\begin{align}
	& \eta^* \in \argstat_{\eta\in\cX_1} \{ \wt{G}_s^\mu(\xi,\eta) + \wt{G}_\sigma^\mu(\eta,\zeta) \}.
	\label{eq:eta-star-argstat}
\end{align}
\end{lemma}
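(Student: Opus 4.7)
The plan is to treat $F(\eta) \doteq \wt{G}_s^\mu(\xi,\eta) + \wt{G}_\sigma^\mu(\eta,\zeta)$ as a continuous quadratic functional on $\cX_1$ and to show that $\eta^*$ of \er{eq:eta-star} is its unique Fr\'echet stationary point, with $F(\eta^*) = \wt{G}_{s+\sigma}^\mu(\xi,\zeta)$.

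First, I would expand using \er{eq:G-mu-long} and collect terms in $\eta$ to obtain
\begin{align*}
F(\eta)
& = \demi \langle \xi, \optilde{P}_s^\mu\, \xi \rangle_1 + \demi \langle \zeta, \optilde{P}_\sigma^\mu\, \zeta \rangle_1
   + \langle \eta,\, \optilde{Q}_s^\mu\, \xi + \optilde{Q}_\sigma^\mu\, \zeta \rangle_1
   + \demi \langle \eta, (\optilde{P}_s^\mu + \optilde{P}_\sigma^\mu)\, \eta \rangle_1,
\end{align*}
in which self-adjointness of $\optilde{P}_s^\mu,\optilde{P}_\sigma^\mu,\optilde{Q}_s^\mu,\optilde{Q}_\sigma^\mu$ (inherited from the spectral form \er{eq:optilde-PQR-mu}) has been used to rewrite the cross terms. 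Since all these operators lie in $\bo(\cX_1)$ by Lemma \ref{lem:optilde-PQR-bounded-mu}, $F$ is Fr\'echet differentiable on $\cX_1$ with Riesz representation
\begin{align*}
\grad_\eta F(\eta) = (\optilde{P}_s^\mu + \optilde{P}_\sigma^\mu)\, \eta + \optilde{Q}_s^\mu\, \xi + \optilde{Q}_\sigma^\mu\, \zeta.
\end{align*}

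Next, I would invoke Lemma \ref{lem:sum-of-angles} to conclude that $[\optilde{P}_s^\mu + \optilde{P}_\sigma^\mu]^{-1}\optilde{Q}_s^\mu$ and $[\optilde{P}_s^\mu + \optilde{P}_\sigma^\mu]^{-1}\optilde{Q}_\sigma^\mu$ belong to $\bo(\cX_1)$. Hence $\eta^*$ as in \er{eq:eta-star} is a well-defined element of $\cX_1$ and satisfies $\grad_\eta F(\eta^*) = 0$. Because $F$ is quadratic and Fr\'echet differentiable, vanishing of $\grad_\eta F(\eta^*)$ is equivalent to the zero-limit characterisation in \er{eq:stat}, so \er{eq:eta-star-argstat} follows. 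Moreover, by bounded invertibility of $\optilde{Q}_s^\mu$ and Lemma \ref{lem:sum-of-angles}, $\eta^*$ is the unique element of $\cX_1$ satisfying this stationarity condition.

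Finally, I would substitute $\eta^*$ back into $F$. Using the identity $(\optilde{P}_s^\mu + \optilde{P}_\sigma^\mu)\, \eta^* = -(\optilde{Q}_s^\mu\, \xi + \optilde{Q}_\sigma^\mu\, \zeta)$ to combine the linear and quadratic terms in $\eta^*$, I obtain
\begin{align*}
F(\eta^*)
& = \demi \langle \xi, (\optilde{P}_s^\mu - \optilde{Q}_s^\mu [\optilde{P}_s^\mu + \optilde{P}_\sigma^\mu]^{-1} \optilde{Q}_s^\mu)\, \xi \rangle_1
  - \langle \xi, \optilde{Q}_s^\mu [\optilde{P}_s^\mu + \optilde{P}_\sigma^\mu]^{-1} \optilde{Q}_\sigma^\mu\, \zeta \rangle_1 \\
& \qquad
  + \demi \langle \zeta, (\optilde{P}_\sigma^\mu - \optilde{Q}_\sigma^\mu [\optilde{P}_s^\mu + \optilde{P}_\sigma^\mu]^{-1} \optilde{Q}_\sigma^\mu)\, \zeta \rangle_1.
\end{align*}
The two Schur complement operators equal $\optilde{P}_{s+\sigma}^\mu$ and $\optilde{Q}_{s+\sigma}^\mu$ respectively by \er{eq:sum-of-angles} (the second diagonal block handled by swapping the roles of $s$ and $\sigma$, valid since $\optilde{P}_{s+\sigma}^\mu = \optilde{P}_{\sigma+s}^\mu$). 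Comparing with \er{eq:G-mu-long} then yields $F(\eta^*) = \wt{G}_{s+\sigma}^\mu(\xi,\zeta)$, which in view of \er{eq:stat} establishes \er{eq:G-stat-single}. The main obstacle is really the well-posedness of $\eta^*$, i.e.\ having the right boundedness for the inverse Schur factor, but this is exactly what Lemma \ref{lem:sum-of-angles} was set up to deliver; the remainder is algebraic.
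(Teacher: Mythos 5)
Your proof is correct and follows the same route as the paper's: expand the quadratic, compute the Fr\'echet gradient, use Lemma \ref{lem:sum-of-angles} for well-posedness of $\eta^*$, and confirm the stationarity condition. In fact you supply a detail the paper's proof leaves implicit --- after establishing \er{eq:eta-star-argstat} the paper simply asserts that \er{eq:G-stat-single} ``subsequently follows by \er{eq:stat}'', whereas you explicitly back-substitute $\eta^*$ and invoke the Schur-complement identities \er{eq:sum-of-angles} to recover $\wt{G}_{s+\sigma}^\mu(\xi,\zeta)$, and you note the uniqueness of the stationary point needed to treat the $\stat$ set as a singleton.
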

\begin{proof}
Given $\mu\in(0,1]$, fix any $s,\sigma\in\Omega^\mu$. \rev{Fix any $\xi,\zeta\in\cX_1$. Applying Lemma \ref{lem:sum-of-angles}, 
$$
	[\optilde{P}_s^\mu + \optilde{P}_\sigma^\mu]^{-1}\, \optilde{Q}_s^\mu, \,
	[\optilde{P}_s^\mu + \optilde{P}_\sigma^\mu]^{-1}\, \optilde{Q}_\sigma^\mu\in\bo(\cX_1)\,,
$$
so that $\eta^*$ is well-defined by \er{eq:eta-star}. By inspection of \er{eq:G-mu-long}, 
\begin{align}
	\grad_\eta \{ \wt{G}_s^\mu(\xi,\eta) + \wt{G}_\sigma^\mu(\eta,\zeta) \}
	& 
	= [\optilde{P}_s^\mu + \optilde{P}_\sigma^\mu] \, \eta +  \optilde{Q}_s^\mu\, \xi + \optilde{Q}_\sigma^\mu\, \zeta
	\nn
\end{align}
for all $\xi,\eta,\zeta\in\cX_1$, so that $0 = \grad_\eta \{ \wt{G}_s^\mu(\xi,\eta) + \wt{G}_\sigma^\mu(\eta,\zeta) \} \bigl|_{\eta = \eta^*}$. Hence, $\eta^*$ also satisfies \er{eq:eta-star-argstat}, and \er{eq:G-stat-single} subsequently follows by \er{eq:stat}.
}
\end{proof}

\begin{theorem}
\label{thm:G-mu-long-concat}
Given any $\mu\in(0,1]$, $t\in\Omega^\mu\cap[\bar t^\mu,\infty)$, and $n_t\in\N$ sufficiently large such that $\tau\doteq t/n_t\in(0,\bar t^\mu)$, the long horizon extension $\wt{G}_t^\mu$ of $G_\tau^\mu$, see \er{eq:G-mu-long}, \er{eq:G-mu}, satisfies
\begin{align}
	\wt{G}_t^\mu(\xi,\zeta)
	& = \stat_{\eta\in(\cX_1)^{n_t-1}} \left\{ 
							G_\tau^\mu(\xi,\eta_1) + \sum_{k=2}^{n_t-1} G_\tau^\mu(\eta_{k-1},\eta_k)
								+ G_\tau^\mu(\eta_{n_t-1},\zeta)
						\right\}
	\label{eq:G-mu-long-n-concat}
\end{align}
for all $\xi,\zeta\in\cX_1$, in which $(\cX_1)^{n_t-1}$ denotes the product space $\cX_1\times \cdots \times \cX_1$, $n_t-1$ times. Furthermore,
\begin{align}
	\wt{G}_t^\mu(\xi,\zeta)
	& = \stat_{\eta\in\cX_1} \left\{ \wt{G}_{k\, \tau}^\mu(\xi,\eta) + \wt{G}_{(n_t - k)\, \tau}^\mu(\eta,\zeta) \right\}
	\nn\\
	& = \wt{G}_{k\, \tau}^\mu(\xi,\eta_k^*) + \wt{G}_{(n_t - k)\, \tau}^\mu(\eta_k^*,\zeta), \quad k\in\N_{< n_t},
	\label{eq:G-mu-long-1-concat}
\end{align}
in which the $\stat$ is achieved at $\eta_k^*\in\cX_1$, where
\begin{align}
	& \eta_k^* 
	\doteq -[\optilde{P}_{k\, \tau}^\mu + \optilde{P}_{(n_t - k)\, \tau}^\mu]^{-1}\, 
	(\optilde{Q}_{k\, \tau}^\mu\, \xi + \optilde{Q}_{(n_t - k)\, \tau}^\mu\, \zeta),
	\label{eq:eta-k-star}
\end{align}
for all $k\in\N_{< n_t}$.
\end{theorem}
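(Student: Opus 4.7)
The plan is to derive both assertions by bootstrapping the single-step stat-concatenation identity of Lemma \ref{lem:G-stat-single}, combined with the rational closure properties of $\Omega^\mu$ from Lemma \ref{lem:Omega-mu-properties}. As a preliminary, since $t\in\Omega^\mu$ and $1/n_t, k/n_t\in\Q_{>0}$, part (ii) of Lemma \ref{lem:Omega-mu-properties} gives $\tau = (1/n_t)\, t\in\Omega^\mu$ and $k\tau = (k/n_t)\, t\in\Omega^\mu$ for every $k\in\{1,\ldots,n_t-1\}$, with $(n_t-k)\tau\in\Omega^\mu$ then following by closure under addition. Since $\tau\in(0,\bar t^\mu)$, the identity \er{eq:G-mu-long-and-short} further yields $\wt{G}_\tau^\mu\equiv G_\tau^\mu$, so that every short-leg term in the theorem statement is a bona fide kernel of the form \er{eq:G-mu-long}.

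I would establish \er{eq:G-mu-long-1-concat} first, as it is an immediate consequence of Lemma \ref{lem:G-stat-single}. Specialising the lemma with $s \leftarrow k\tau$ and $\sigma\leftarrow(n_t-k)\tau$ is legitimate by the preliminary above and yields the first equality directly. The associated stationary point \er{eq:eta-star} specialises precisely to $\eta_k^*$ of \er{eq:eta-k-star}, whose well-posedness rests on the bounded invertibility of $\optilde{P}_{k\tau}^\mu + \optilde{P}_{(n_t-k)\tau}^\mu$ exhibited in the proof of Lemma \ref{lem:sum-of-angles}. Evaluating \er{eq:stat} at this unique stationary point then delivers the second equality in \er{eq:G-mu-long-1-concat}.

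For \er{eq:G-mu-long-n-concat}, I would proceed by finite induction on $n_t$. The base case $n_t = 2$ is Lemma \ref{lem:G-stat-single} with $s = \sigma = \tau$ after substituting $\wt{G}_\tau^\mu = G_\tau^\mu$. For the inductive step, applying Lemma \ref{lem:G-stat-single} with $s = (n_t-1)\tau$ and $\sigma = \tau$ peels off the final leg,
\[
	\wt{G}_t^\mu(\xi,\zeta)
	= \stat_{\eta_{n_t - 1}\in\cX_1}\bigl\{\wt{G}_{(n_t-1)\tau}^\mu(\xi,\eta_{n_t-1}) + G_\tau^\mu(\eta_{n_t - 1},\zeta)\bigr\},
\]
and the inductive hypothesis expands $\wt{G}_{(n_t-1)\tau}^\mu(\xi,\eta_{n_t-1})$ as a stat over $(\cX_1)^{n_t-2}$. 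Substitution then collapses the nested single-variable stat and the product-space stat into a single stat over $(\cX_1)^{n_t-1}$.

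The main obstacle is precisely this collapse of nested stats into a joint stat. Since every $G_\tau^\mu(\cdot,\cdot)$ is jointly quadratic by Theorem \ref{thm:G-mu-representation}, the joint first-order stationarity condition in $(\eta_1,\ldots,\eta_{n_t-1})$ reads as a block-tridiagonal linear system; I would verify that the Schur-complement reductions implicit in the nested stats correspond exactly to successive elimination of blocks of this system, invoking the invertibility of $\optilde{P}_{k\tau}^\mu + \optilde{P}_{(n_t-k)\tau}^\mu$ from Lemma \ref{lem:sum-of-angles} to rule out spurious stationary branches. Once that equivalence is secured, \er{eq:G-mu-long-n-concat} follows, and the explicit expression \er{eq:eta-k-star} for $\eta_k^*$ emerges as the $k$-th block of the joint solution, consistent with \er{eq:eta-star} applied to $s = k\tau$, $\sigma = (n_t-k)\tau$.
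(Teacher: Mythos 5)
Your proposal is correct and follows essentially the same route as the paper: both rely on the rational-closure property of $\Omega^\mu$ (Lemma \ref{lem:Omega-mu-properties}(ii)) to ensure all the intermediate horizons $k\tau$, $(n_t-k)\tau$ lie in $\Omega^\mu$, both obtain \er{eq:G-mu-long-1-concat} and \er{eq:eta-k-star} by a direct application of Lemma \ref{lem:G-stat-single} with $s = k\tau$, $\sigma = (n_t-k)\tau$, and both establish \er{eq:G-mu-long-n-concat} by iterating Lemma \ref{lem:G-stat-single} to peel off one leg of length $\tau$ at a time (with $\wt G_\tau^\mu = G_\tau^\mu$ via \er{eq:G-mu-long-and-short}). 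The only non-cosmetic difference is that you explicitly flag the step the paper leaves implicit: passing from the iterated (nested) staticisations $\stat_{\eta_{n_t-1}}\{\cdots\stat_{\eta_1}\{\cdots\}\cdots\}$ to the joint staticisation over $(\cX_1)^{n_t-1}$. The paper simply absorbs the nested stat into a multi-variable stat in the displayed recursion without comment; you correctly observe that this is not automatic for a general $\stat$ operation, and that in the quadratic setting it reduces to matching successive Schur-complement (block Gaussian elimination) steps on the block-tridiagonal stationarity system, using the bounded invertibility properties from Lemma \ref{lem:sum-of-angles} to ensure the eliminations are well-defined and produce a unique stationary point. That extra care tightens the argument without changing it; your framing of the induction as being "on $n_t$" should more precisely be read as induction on $k\in\{1,\ldots,n_t\}$ for the kernels $\wt G_{k\tau}^\mu$ with $\tau$ held fixed (so that the inductive hypothesis is available even when $(k-1)\tau < \bar t^\mu$), which is exactly how the paper sets it up.
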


\begin{proof}
Fix $\mu\in(0,1]$, $t\in\Omega^\mu\cap[\bar t^\mu,\infty)$, and $n_t\in\N$, $\tau\in(0,\bar t^\mu)$ as per the theorem statement. By Lemma \ref{lem:Omega-mu-properties}, $k \, \tau\in\Omega^\mu$ for all $k\in\N$. 
Hence, given $k\in[2,n_t]\cap\N$, applying Lemma \ref{lem:G-stat-single} with $s\doteq (k-1)\, \tau$ 
and $\sigma \doteq \tau$ yields
\begin{align}
	\wt{G}_{k\, \tau}^\mu(\xi,\zeta)
	& = \stat_{\eta_{k-1}\in\cX_1} \{ \wt{G}_{(k-1)\, \tau}^\mu(\xi,\eta_{k-1}) + \wt{G}_{\tau}^\mu(\eta_{k-1},\zeta) \}
	\nn\\
	& = \stat_{\eta_{k-1}\in\cX_1} \{ \wt{G}_{(k-1)\, \tau}^\mu(\xi,\eta_{k-1}) +G_{\tau}^\mu(\eta_{k-1},\zeta) \}
	\nn\\
	& =  \stat_{\eta_{k-1}, \eta_{k-2}\in\cX_1} \{ \wt{G}_{(k-2)\, \tau}^\mu(\xi,\eta_{k-2}) + G_{\tau}^\mu(\eta_{k-2},\eta_{k-1})  
						+G_{\tau}^\mu(\eta_{k-1},\zeta) \},
	\nn
\end{align}
which yields \er{eq:G-mu-long-n-concat} by induction, for $k=n_t$. Again applying Lemma \ref{lem:G-stat-single} with $s\doteq j\, \tau$, $\sigma\doteq (n_t - j)\, \tau$ for $j\in\N_{<n_t}$ subsequently yields \er{eq:G-mu-long-1-concat}, \er{eq:eta-k-star}.
\end{proof}

In view of \er{eq:idem-rep}, \er{eq:G-mu-long}, \er{eq:G-mu-long-and-short}, and Theorem \ref{thm:G-mu-long-concat}, $W_t^\mu$ of \er{eq:idem-rep} may be generalized to $\wt{W}_t^\mu:\cX_1\rightarrow\ol{\R}$ for $t\in\Omega^\mu$ via
\begin{align}
	\wt{W}_t^\mu(\xi)
	& \doteq \stat_{\zeta\in\cX_1} \{ \wt{G}_t^\mu(\xi,\zeta) + \rev{\psi}(\zeta) \}
	\label{eq:idem-rep-long}
\end{align}
for all $\xi = \xi_0\in\cX_1$. \rev{With $\psi = \psi_v$ as per \er{eq:psi-v},} selecting $n_t\in\N$ as indicated, and generalising \er{eq:z-star}, note that the $\stat$ in \er{eq:idem-rep-long} is achieved at
\begin{align}
	\zeta_{\xi_0}^* 
	& = -(\optilde{P}_t^\mu)^{-1} ( \optilde{Q}_t^\mu\, \xi_0 + \op{E}_\mu^{-1}\, v ).
	\label{eq:zeta-0-star}
\end{align}
Applying \er{eq:G-mu-long-1-concat}, \er{eq:eta-k-star}, note further that
\begin{align}
	\eta_1^*
	& = - [\optilde{P}_\tau^\mu + \optilde{P}_{(n_t-1)\tau}^\mu ]^{-1}
	( \optilde{Q}_\tau^\mu\, \xi_0 + \optilde{Q}_{(n_t-1)\tau}^\mu\, \zeta_{\xi_0}^* ).
	\nn
\end{align}
Motivated by \er{eq:pi-0}, define (for the long horizon case)
\begin{align}
	& \pi_0
	\doteq \op{E}_\mu \, \grad \wt{W}_t(\xi_0)
	= \op{E}_\mu \, ( \optilde{P}_\tau^\mu\, \xi_0 + \optilde{Q}_{\tau}^\mu\, \eta_1^* )
	\nn\\
	& \!\! = \op{E}_\mu ( \optilde{P}_\tau^\mu - \optilde{Q}_{\tau}^\mu \, 
	[\optilde{P}_\tau^\mu + \optilde{P}_{(n_t-1)\tau}^\mu ]^{-1} \optilde{Q}_\tau^\mu)\, \xi_0
	- \op{E}_\mu\, \optilde{Q}_{\tau}^\mu \, 
	[\optilde{P}_\tau^\mu + \optilde{P}_{(n_t-1)\tau}^\mu ]^{-1} \optilde{Q}_{(n_t-1)\tau}^\mu \, \zeta_{\xi_0}^*
	\nn\\
	& \!\! = \op{E}_\mu \, ( \optilde{P}_t^\mu\, \xi_0 + \optilde{Q}_t^\mu\, \zeta_{\xi_0}^*)
	\label{eq:pi-0-TPBVP}
	= \op{E}_\mu \, ( \optilde{P}_t^\mu - \optilde{Q}_t^\mu\, (\optilde{P}_t^\mu)^{-1}\, \optilde{Q}_t^\mu) \, \xi_0 
	- \op{E}_\mu\, \optilde{Q}_t^\mu\, (\optilde{P}_t^\mu)^{-1}\, \optilde{E}_\mu^{-1}\, \pi_t,
\end{align}
in which the second last and last equalities follow by Lemma \ref{lem:sum-of-angles} and \er{eq:zeta-0-star}. This is of exactly the same form as \er{eq:pi-0}. 

In this way, \er{eq:prototype-mu}, \er{eq:prototype-mu-elmts} extend to all horizons in $\Omega^\mu$, which is dense in $\R_{>0}$.
By way of the action principle, it may be noted that the concatenated trajectory defined by \er{eq:idem-rep-long} renders the payoff \er{eq:payoff-mu} stationary, implying that it is a solution of the approximate (right-hand) wave equation in \er{eq:wave-mu}. 

Explicitly,
$\wh{\op{U}}_t^\mu$ of \er{eq:prototype-mu}, \er{eq:prototype-mu-elmts} extends to yield corresponding elements of a long horizon prototype $\{\optilde{U}_t^\mu\}_{t\in\Omega^\mu}$, with
\begin{align}
	& \hspace{-3mm}
	\left( \ba{c}
		\xi_t
		\\
		\pi_t
	\ea \right)
	= \optilde{U}_t^\mu
		\left( \ba{c}
		\xi_0
		\\
		\pi_0
	\ea \right), \quad
	\optilde{U}_t^\mu
	\doteq
	 \left( \ba{c|c}
	 	[\optilde{U}_t^\mu]_{11} & [\optilde{U}_t^\mu]_{12}
		\\[0mm]
		& \\[-3mm]\hline
		& \\[-3mm]
	 	[\optilde{U}_t^\mu]_{21} & [\optilde{U}_t^\mu]_{22}
	\ea \right)\!, \quad t\in\Omega^\mu,
	\label{eq:prototype-mu-long}
\end{align}
in which $[\optilde{U}_t^\mu]_{11}\in\bo(\cX_1)$, $[\optilde{U}_t^\mu]_{12}\in\bo(\cX;\cX_1)$, $[\optilde{U}_t^\mu]_{21}\in\bo(\cX_1;\cX)$, $[\optilde{U}_t^\mu]_{22}\in\bo(\cX)$ are given by
\begin{equation}
	\begin{aligned}
	{[\optilde{U}_t^\mu]}_{11}
	& \doteq 
	-(\optilde{Q}_t^\mu)^{-1}\, \optilde{P}_t^\mu,
	&
	[\optilde{U}_t^\mu]_{12}
	& \doteq (\optilde{Q}_t^\mu)^{-1}\, \op{E}_\mu^{-1},
	\\
	[\optilde{U}_t^\mu]_{21}
	& \doteq \rev{-} \op{E}_\mu\, \optilde{Q}_t^\mu\, \left( \op{I} - [(\optilde{Q}_t^\mu)^{-1} \, \optilde{P}_t^\mu]^2 \right),
	&
	[\optilde{U}_t^\mu]_{22}
	& \doteq - \op{E}_\mu\, \optilde{P}_t^\mu\, (\optilde{Q}_t^\mu)^{-1}\, \op{E}_\mu^{-1},
	\end{aligned}
	\label{eq:prototype-mu-elmts-long}
\end{equation}
for all $t\in\Omega^\mu$. Recalling \er{eq:optilde-PQR-mu}, \er{eq:eig-pqr-tilde-mu}, the corresponding long horizon extension of Lemma \ref{lem:op-properties} implies that these operators exhibit the spectral representation \er{eq:spectral-opA}, with corresponding eigenvalues given by
\begin{align}
	\begin{aligned}
	\lbrack [\tilde u_t^\mu]_{11} \rbrack_n
	& \doteq -\frac{[\tilde p_t^\mu]_n}{[\tilde q_t^\mu]_n} = \cos(\omega_n^\mu\, t),
	&
	[[\tilde u_t^\mu]_{12}]_n 
	& \doteq \sin(\omega_n^\mu\, t),
	\\
	[[\tilde u_t^\mu]_{21}]_n
	& \doteq -\sin(\omega_n^\mu\, t),
	& 
	[[\tilde u_t^\mu]_{22}]_n
	& \doteq \cos(\omega_n^\mu\, t).
	\end{aligned}
	\label{eq:prototype-mu-long-eig}
\end{align}
The prototype \er{eq:prototype-mu-elmts-long} is extended to negative horizons $t$, that is $-t\in\Omega^\mu$, via 
\begin{align}
	\optilde{U}_t^\mu
	& \doteq \optilde{U}_{-t}^\mu = 
	\left( \ba{c|c}
	 	[\optilde{U}_{-t}^\mu]_{11} & [\optilde{U}_{-t}^\mu]_{12}
		\\[0mm]
		& \\[-3mm]\hline
		& \\[-3mm]
	 	[\optilde{U}_{-t}^\mu]_{21} & [\optilde{U}_{-t}^\mu]_{22}
	\ea \right)\!, \quad -t\in\Omega^\mu.
	\nn
\end{align}

\begin{theorem}
\label{thm:group-rep-mu}
The set $\{\wt{\op{U}}_t^\mu,\, \wt{\op{U}}_{-t}^\mu \}_{t\in\Omega^\mu}$ of \er{eq:prototype-mu-long}, \er{eq:prototype-mu-elmts-long} defines a uniformly continuous group, and is equivalent to the subgroup $\{\op{U}_t^\mu, \, \op{U}_{-t}^\mu\}_{t\in\Omega^\mu}$ of \er{eq:group-mu}, \er{eq:group-mu-elmts} generated by $\op{A}^\mu$ of \er{eq:generators}.
\end{theorem}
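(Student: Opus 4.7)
The plan is to prove Theorem \ref{thm:group-rep-mu} in two stages: first, establish the pointwise equality $\wt{\op{U}}_t^\mu = \op{U}_t^\mu$ for each $t\in\Omega^\mu$; second, deduce the group and uniform continuity properties by transport of structure from the already-established group $\{\op{U}_t^\mu\}_{t\in\R}$ of Theorem \ref{thm:generators}(iv). No new analytic machinery is required; everything reduces to spectral bookkeeping plus the closure of $\Omega^\mu$ under addition given by Lemma \ref{lem:Omega-mu-properties}(ii).

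For the first stage, I would fix $\mu\in(0,1]$ and $t\in\Omega^\mu$, and compare block-by-block the definition \er{eq:prototype-mu-elmts-long} of $\wt{\op{U}}_t^\mu$ with the definition \er{eq:group-mu-elmts} of $\op{U}_t^\mu$. Each of $\optilde{P}_t^\mu$, $\optilde{Q}_t^\mu$, $\op{E}_\mu$, and their bounded inverses admits the spectral form \er{eq:spectral-opA}, using \er{eq:eig-pqr-tilde-mu} together with the long-horizon analogue of Lemma \ref{lem:op-properties}. That analogue goes through verbatim because for $t\in\Omega^\mu$ the quantity $\omega_n^\mu t$ is never an integer multiple of $\pi/2$, so all trigonometric quantities in \er{eq:eig-pqr-tilde-mu} are finite, with boundedness uniform in $n$ ensured by Lemma \ref{lem:optilde-PQR-bounded-mu} (and the companion bound $|\omega_n^\mu\sin(\omega_n^\mu t)|\le 1/\mu$ for $(\optilde{Q}_t^\mu)^{-1}$ as used in the proof of Lemma \ref{lem:sum-of-angles}). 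Consequently each composite block in \er{eq:prototype-mu-elmts-long} inherits the spectral form, and elementary trigonometric simplifications such as $-[\tilde p_t^\mu]_n/[\tilde q_t^\mu]_n = \cos(\omega_n^\mu t)$ reduce its eigenvalues to those already listed in \er{eq:prototype-mu-long-eig}. These coincide exactly with the eigenvalues of the corresponding blocks of $\op{U}_t^\mu$ in \er{eq:group-mu-elmts}, so $\wt{\op{U}}_t^\mu$ and $\op{U}_t^\mu$ agree on the dense span of eigenvectors and hence, by boundedness of both operators on the appropriate Hilbert space pairs, on all of $\cY$. This is the long-horizon analogue of Lemma \ref{lem:short-horizon-prototype}. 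The equivalence extends to $\wt{\op{U}}_{-t}^\mu = \op{U}_{-t}^\mu$ via the defining relation of $\wt{\op{U}}_t^\mu$ for $-t\in\Omega^\mu$ combined with the even/odd symmetry of $\cos$, $\sin$.

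For the second stage, Lemma \ref{lem:Omega-mu-properties}(ii) guarantees that $\Omega^\mu$ is closed under addition, so for $s,t\in\Omega^\mu$ the stage-one identity gives $\wt{\op{U}}_s^\mu\, \wt{\op{U}}_t^\mu = \op{U}_s^\mu\, \op{U}_t^\mu = \op{U}_{s+t}^\mu = \wt{\op{U}}_{s+t}^\mu$, and this extends to arbitrary sign combinations via $\wt{\op{U}}_{-t}^\mu = (\op{U}_t^\mu)^{-1}$. Uniform continuity, including the bound \er{eq:uniform-semigroup-bound}, is inherited directly from $\{\op{U}_t^\mu\}_{t\in\R}$. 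The main obstacle is purely calculational bookkeeping in the first stage, namely verifying the trigonometric identities underlying \er{eq:prototype-mu-long-eig}; this follows the same pattern already developed in the proofs of Lemmas \ref{lem:op-properties} and \ref{lem:sum-of-angles}. Beyond this, the structural content (group law, uniform continuity, bound) requires no new work, as it is inherited wholesale from Theorem \ref{thm:generators} via the stage-one equivalence.
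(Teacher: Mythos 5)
Your proposal is correct and takes essentially the same approach as the paper, whose proof is simply the one-line statement ``Immediate by comparison of \er{eq:group-mu}, \er{eq:group-mu-elmts} with \er{eq:prototype-mu-long}, \er{eq:prototype-mu-elmts-long}, \er{eq:prototype-mu-long-eig}.'' Your two-stage write-up is just a fuller elaboration of that block-by-block spectral comparison together with the observation that the group law and uniform continuity are inherited from $\{\op{U}_t^\mu\}_{t\in\R}$ once the pointwise identity $\wt{\op{U}}_t^\mu=\op{U}_t^\mu$ is established.
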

\begin{proof}
Immediate by comparison of \er{eq:group-mu}, \er{eq:group-mu-elmts} with \er{eq:prototype-mu-long}, \er{eq:prototype-mu-elmts-long}, \er{eq:prototype-mu-long-eig}.
\end{proof}

It remains to establish convergence of the approximating long horizon group $\{\optilde{U}_t\}_{t\in\Omega^\mu}$ to the subgroup $\{\op{U}_t\}_{t\in\Omega^0}$ as $\mu\rightarrow 0$. To this end, fix any sequence
\begin{align}
	& S\doteq \{\mu_i\}_{i\in\N}\subset\R_{>0},
	\quad \text{\it s.t.} \quad \lim_{i\rightarrow\infty} \mu_i = 0,
	\label{eq:sequence-S}
\end{align}
and define the common set of horizons $\ol{\Omega}^S\subset\R_{>0}$ by
\begin{align}
	\ol{\Omega}^S
	& \doteq \Omega^0\cap\bigcap_{i\in\N} \Omega^{\mu_i}
	= \R_{>0} \setminus \{ t_{n,j,k}^0,\, t_{n,j,k}^{\mu_i} \}_{j,k,i\in\N, n\in\N\cup\{\infty\}}.
	\label{eq:Omega-bar}
\end{align}
As $\{ t_{n,j,k}^0 \}_{j,k\in\N, n\in\N\cup\{\infty\}}$, $\{t_{n,j,k}^{\mu_i} \}_{j,k,i\in\N, n\in\N\cup\{\infty\}}$ define countable subsets of $\R_{>0}$, their union is also countable and has zero measure. Hence, $\ol{\Omega}^S$ is an uncountable subset of $\R_{>0}$, with the same measure as $\R_{>0}$.

\begin{theorem}
\label{thm:convergence}
Given any sequence $S$ as per \er{eq:sequence-S}, and any horizon $t\in\ol{\Omega}^S$ as per \er{eq:Omega-bar},
\begin{align}
	0 & = 
	\lim_{i\rightarrow\infty} \left\| \, \optilde{U}_t^{\mu_i} \! \left( \ba{c} x \\ p \ea \right) - \op{U}_t  \left( \ba{c} x \\ p \ea \right)  \right\|_\sharp
	\label{eq:convergence}
\end{align}
for all $(x,p)\in\cY \equiv \cX_1\times\cX$.
\end{theorem}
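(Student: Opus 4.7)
The claim will follow by chaining two previously established results: the identification of the constructed long horizon prototype with the generator-based group (Theorem \ref{thm:group-rep-mu}), and the strong convergence of the Yosida-approximated groups (Theorem \ref{thm:generators}(vi)). No new estimation is required; everything nontrivial has been absorbed into these two cited results, so the proof is essentially a bookkeeping exercise.

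First, I would observe that by the definition \er{eq:Omega-bar} of $\ol{\Omega}^S$, any fixed $t\in\ol{\Omega}^S$ lies in $\Omega^{\mu_i}$ for every $i\in\N$ (and also in $\Omega^0$). Consequently, Theorem \ref{thm:group-rep-mu} applies at each $\mu=\mu_i$, yielding the operator identity
\begin{equation*}
\optilde{U}_t^{\mu_i} = \op{U}_t^{\mu_i}\,, \qquad i\in\N,
\end{equation*}
where $\op{U}_t^{\mu_i}$ is the $C^0$-group element generated by $\op{A}^{\mu_i}$ as in \er{eq:generators}, \er{eq:group-mu}, \er{eq:group-mu-elmts}. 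Hence, for every $(x,p)\in\cY$,
\begin{equation*}
\optilde{U}_t^{\mu_i}\left(\ba{c} x \\ p \ea\right) - \op{U}_t\left(\ba{c} x \\ p \ea\right)
= \op{U}_t^{\mu_i}\left(\ba{c} x \\ p \ea\right) - \op{U}_t\left(\ba{c} x \\ p \ea\right).
\end{equation*}

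Second, I would invoke Theorem \ref{thm:generators}(vi) on the compact singleton $\Omega\doteq\{t\}\subset\R$, which yields
$\lim_{\mu\to 0} \|\op{U}_t^\mu y - \op{U}_t y\|_\cY = 0$ for every $y\in\cY$. Specialising to the sequence $\{\mu_i\}$ supplied by \er{eq:sequence-S}, which converges to $0$, and reading the norm $\|\cdot\|_\sharp$ in \er{eq:convergence} as the ambient $\cY$-norm in the present context $(x,p)\in\cY = \cX_1\times\cX$, this is exactly \er{eq:convergence}.

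There is no genuinely hard step in this argument; if a challenge arises it is only the notational matching of $\|\cdot\|_\sharp$ (defined on $\cX_1\times\cX_1$) with $\|\cdot\|_\cY$ on $\cY=\cX_1\times\cX$, which is consistent with the uniform bound \er{eq:uniform-semigroup-bound}. The substantive content lives one level down: the spectral form \er{eq:prototype-mu-long-eig} makes the proof of Theorem \ref{thm:group-rep-mu} transparent, while Theorem \ref{thm:generators}(vi) reduces to pointwise convergence $\omega_n^{\mu_i} \to \sqrt{\lambda_n}$ of the eigenfrequencies combined with a dominated-convergence argument in the orthonormal expansions of \er{eq:group-mu-elmts} — both of which are already at our disposal.
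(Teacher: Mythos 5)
Your proof is correct and follows the same route as the paper's: the paper's (terser) proof notes $t\in\Omega^0\cap\bigcap_i\Omega^{\mu_i}$ and then invokes Theorem~\ref{thm:generators}(vi), leaving the identification $\optilde{U}_t^{\mu_i}=\op{U}_t^{\mu_i}$ via Theorem~\ref{thm:group-rep-mu} implicit, which you have merely spelled out. Your remark about reading $\|\cdot\|_\sharp$ as the $\cY$-norm on $\cX_1\times\cX$ is the right reconciliation of the paper's notation.
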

\begin{proof}
With $t\in\ol{\Omega}^S$ fixed as per the hypothesis, note that $t\in\Omega^0$, and $t\in\Omega^{\mu_i}$ for all $i\in\N$. Hence, the result follows by assertion {\em (vi)} of Theorem \ref{thm:generators}.
\end{proof}


\section{Application to solving a TPBVP}
\label{sec:TPBVP}

Given $x,z\in\cX_2$, $\ol{\cX_2} = \cX_1$, and $t\in\Omega^0$, consider the TPBVP defined with respect to $(t,x,z)$ by
\begin{align}
	& \textsf{(TPBVP)} \quad
	\left\{ \ba{c}
	\text{Find $p = \dot{x}(0)\in\cX$}
	\\
	\text{s.t. \er{eq:wave} holds with}
	\\
	\text{$x(0) = x$, $x(t) = z$.}
	\ea \right.
	\label{eq:TPBVP}
\end{align}
\rev{
The group $\{\op{U}_t\}_{t\in\ol{\Omega}^S}$ generated by $\op{A}$, see \er{eq:generators}, explicitly propagates solutions of \er{eq:wave} for any initial data. Its construction also facilitates the solution of TPBVPs constrained by \er{eq:wave}, including \er{eq:TPBVP}. 
}

\subsection{Solution and convergence}
The solution of TPBVP \er{eq:TPBVP} can be approximated via the long horizon subgroup $\{ \optilde{U}_t^\mu \}_{t\in\Omega^\mu}$. In particular, the solution $p\in\cX$ may be approximated by $\pi_0$ in \er{eq:pi-0-TPBVP} by setting $\xi_0 = x$ and replacing the achieved terminal state $\zeta_{\xi_0}^*$ with the desired terminal state $z$. That is, the approximate and expected exact solutions are given by
\begin{align}
	& \pi_0^{\mu_i}
	= \op{E}_{\mu_i} ( \optilde{P}_t^{\mu_i}\, x + \optilde{Q}_t^{\mu_i}\, z ),
	\quad
	\pi_0^0
	= \opA ( \optilde{P}_t^{0}\, x + \optilde{Q}_t^{0}\, z ),
	\label{eq:pi-0-both}
\end{align}
in which $\mu_i$ is an element of sequence $S$ of \er{eq:sequence-S}, and $t\in\Omega^0$ is replaced with an arbitrarily close $t\in\ol{\Omega}^S$. The corresponding solutions of the wave equation \er{eq:wave} and its approximation \er{eq:wave-mu} are defined via the respective groups of Theorem \ref{thm:generators} applied to the same initial conditions $(x,\pi_0^{\mu_i})$. In particular, these solutions are given by
\begin{align}
	& \hspace{-3mm}
	\left( \ba{c}
		x_s^{\mu_i}
		\\
		p_s^{\mu_i}
	\ea \right)
	= \op{U}_s \left( \ba{c}
		x
		\\
		\pi_0^{\mu_i}
	\ea \right)\!,
	\qquad
	\left( \ba{c}
		\xi_s^{\mu_i}
		\\
		\pi_s^{\mu_i}
	\ea \right)
	= \optilde{U}_s^{\mu_i} \left( \ba{c}
		x
		\\
		\pi_0^{\mu_i}
	\ea \right)\!,	
	\label{eq:TPBVP-solutions}
\end{align}
for all $s\in(0,t)\cap\ol{\Omega}^S$, $i\in\N$.
In order to show that these solutions converge, and satisfy TPBVP \er{eq:TPBVP}, some preliminary convergence results are required.

%
%
\begin{lemma}
\label{lem:op-PQR-convergence}
Given any $t\in\ol{\Omega}^S$, $\xi\in\cX_1$,
\begin{align}
	\lim_{i\rightarrow\infty} \| \optilde{P}_t^{\mu_i}\, \xi - \optilde{P}_t^0\, \xi \|_1
	& = 0 = \lim_{i\rightarrow\infty} \| \optilde{Q}_t^{\mu_i}\, \xi - \optilde{Q}_t^0\, \xi \|_1.
	\label{eq:op-PQR-convergence}
\end{align}
\end{lemma}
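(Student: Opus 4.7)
The plan is to exploit the common spectral form \er{eq:optilde-PQR-mu} of $\optilde{P}_t^\mu$, $\optilde{Q}_t^\mu$ in the orthonormal basis $\{\tilde\varphi_n\}_{n\in\N}$ of $\cX_1$ and reduce \er{eq:op-PQR-convergence} to a dominated-convergence argument on the scalar eigenvalue sequences $\{[\tilde p_t^\mu]_n\}_{n\in\N}$, $\{[\tilde q_t^\mu]_n\}_{n\in\N}$ of \er{eq:eig-pqr-tilde-mu}. Fix $t\in\ol\Omega^S$ and $\xi\in\cX_1$. By Parseval's identity applied to the spectral form \er{eq:optilde-PQR-mu},
\begin{align}
	\| \optilde{P}_t^{\mu_i}\, \xi - \optilde{P}_t^0\, \xi \|_1^2
	& = \sum_{n=1}^\infty \bigl| [\tilde p_t^{\mu_i}]_n - [\tilde p_t^0]_n \bigr|^2 \, | \langle \xi,\tilde\varphi_n \rangle_1 |^2,
	\nn
\end{align}
and analogously for $\optilde{Q}_t^{\mu_i}\, \xi - \optilde{Q}_t^0\, \xi$.

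First I would establish pointwise convergence of the eigenvalues: for each fixed $n\in\N$, \er{eq:omega-lambda} yields $\omega_n^{\mu_i}\to\omega_n^0 = \sqrt{\lambda_n}$ as $i\to\infty$, while $t\in\ol{\Omega}^S\subset\Omega^0$ forces $\omega_n^0\, t \notin \frac{\pi}{2}\N$, so that both $\tan(\omega_n^{\mu_i}\, t)\to\tan(\omega_n^0\, t)\ne 0$ and $\sin(\omega_n^{\mu_i}\, t)\to\sin(\omega_n^0\, t)\ne 0$ by continuity. Hence $[\tilde p_t^{\mu_i}]_n\to[\tilde p_t^0]_n$ and $[\tilde q_t^{\mu_i}]_n\to[\tilde q_t^0]_n$ for every $n\in\N$.

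Second, I would invoke the uniform bound from Lemma \ref{lem:optilde-PQR-bounded-mu}: since $t\in\ol\Omega^S$ implies $t\in\Omega^{\mu_i}$ for every $i\in\N$ and $t\in\Omega^0$, there exists $L_t\in\R_{>0}$ independent of $\mu$ such that $\max\bigl(|[\tilde p_t^{\mu_i}]_n|,|[\tilde q_t^{\mu_i}]_n|, |[\tilde p_t^0]_n|, |[\tilde q_t^0]_n|\bigr)\le L_t$ for all $n,i\in\N$. Consequently $|[\tilde p_t^{\mu_i}]_n - [\tilde p_t^0]_n|^2\, |\langle\xi,\tilde\varphi_n\rangle_1|^2 \le 4 L_t^2\, |\langle \xi,\tilde\varphi_n \rangle_1|^2$, and the dominating series $\sum_{n=1}^\infty 4 L_t^2\, |\langle \xi,\tilde\varphi_n \rangle_1|^2 = 4L_t^2\, \|\xi\|_1^2$ is finite.

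Finally I would apply Tannery's theorem (i.e.\ dominated convergence for sums) to pass to the limit $i\to\infty$ termwise, yielding the first equality of \er{eq:op-PQR-convergence}; the second follows by the identical argument applied to $\{[\tilde q_t^\mu]_n\}_{n\in\N}$. The main obstacle lies entirely in securing the $\mu$-uniform eigenvalue bound, which is precisely where the badly-approximable-number construction of Lemma \ref{lem:optilde-PQR-bounded-mu} is needed to rule out near-resonances $\omega_n^{\mu_i}\, t\approx j\, \frac{\pi}{2}$ uniformly in $n$ and $i$; given that lemma, the remainder of the argument is a routine dominated-convergence application.
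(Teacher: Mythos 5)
Your proposal is correct and follows essentially the same route as the paper's proof: a Parseval reduction to scalar eigenvalue sequences, the $\mu$-uniform bound from Lemma \ref{lem:optilde-PQR-bounded-mu} as the dominating term, pointwise convergence of $[\tilde p_t^{\mu_i}]_n$, $[\tilde q_t^{\mu_i}]_n$ via $\omega_n^{\mu_i}\to\omega_n^0$ and $t\in\Omega^0$, and then the Dominated Convergence Theorem (which is what you call Tannery's theorem). The only cosmetic difference is that the paper establishes the pointwise limit by an explicit triangle-inequality split of $|[\tilde p_t^{\mu_i}]_n-[\tilde p_t^0]_n|$, whereas you cite continuity directly, but the underlying argument is the same.
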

\begin{proof}
Fix any $t\in\ol{\Omega}^S$, $\xi\in\cX_1$. Lemma \ref{lem:optilde-PQR-bounded-mu} implies that $\optilde{P}_t^\mu, \optilde{Q}_t^\mu\in\bo(\cX_1)$ for all $\mu\in S\cup\{0\}$. Hence, $\optilde{P}_t^{\mu_i}\, \xi - \optilde{P}_t^0\, \xi \in \cX_1$, and $\langle \optilde{P}_t^{\mu_i}\, \xi - \optilde{P}_t^0\, \xi\,, \cdot \rangle_1:\cX_1\rightarrow\R$ is closed for every $i\in\N$. Consequently, recalling \er{eq:optilde-PQR-mu},
\begin{align}
	& \| \optilde{P}_t^{\mu_i}\, \xi - \optilde{P}_t^0\, \xi \|_1^2
	= \left\langle \optilde{P}_t^{\mu_i}\, \xi - \optilde{P}_t^0\, \xi, \,
	\sum_{n=1}^\infty ([\tilde p_t^{\mu_i}]_n - [\tilde p_t^0]_n) \, \langle \xi, \tilde\varphi_n \rangle_1 \tilde\varphi_n
	\right\rangle_1
	\nn\\
	&
	= \sum_{n=1}^\infty ([\tilde p_t^{\mu_i}]_n - [\tilde p_t^0]_n) \, \langle \xi, \tilde\varphi_n \rangle_1\, 
	\left\langle \optilde{P}_t^{\mu_i}\, \xi - \optilde{P}_t^0\, \xi, \tilde\varphi_n \right\rangle_1
	\nn\\
	&
	= \sum_{n=1}^\infty |[\tilde p_t^{\mu_i}]_n - [\tilde p_t^0]_n|^2 \, | \langle \xi, \tilde\varphi_n \rangle_1 |^2.
	\label{eq:star}
\end{align}
Define the sequences $\{\alpha_n^i\}_{n,i\in\N}$, $\{\beta_n\}_{n\in\N}$ by
\begin{align}
	\alpha_n^i 
	& \doteq |[\tilde p_t^{\mu_i}]_n - [\tilde p_t^0]_n|^2 \, | \langle \xi, \tilde\varphi_n \rangle_1 |^2,
	\ \
	\beta_n
	\doteq 4\, L_t^2\, | \langle \xi, \tilde\varphi_n \rangle_1 |^2,
	\nn
\end{align}
for all $n,i\in\N$. The triangle inequality and Lemma \ref{lem:optilde-PQR-bounded-mu} yield
\begin{align}
	& 0 \le \alpha_n^i 
	\le 2\, (|[\tilde p_t^{\mu_i}]_n^2 + |[\tilde p_t^0]_n^2 ) | \langle \xi, \tilde\varphi_n \rangle_1 |^2
	\le \beta_n,
	\quad n,i\in\N,
	\nn\\
	& 0\le \sum_{n=1}^\infty \beta_n 
	= 4\, L_t^2 \sum_{n=1}^\infty | \langle \xi, \tilde\varphi_n \rangle_1 |^2
	= 4\, L_t^2\, \|\xi\|_1^2 < \infty.
	\label{eq:pre-dominated-cgce}
\end{align}
Meanwhile, recalling \er{eq:eig-pqr-tilde-mu}, note in the definition of $\alpha_n^i$ that
\begin{align}
	& | [\tilde p_t^{\mu_i}]_n - [\tilde p_t^0]_n |
	= \left| \frac{-1}{\omega_n^{\mu_i}\, \tan(\omega_n^{\mu_i}\, t)} + \frac{1}{\omega_n^0\, \tan(\omega_n^0\, t)} \right|
	\nn\\
	& \le \left| \frac{-1}{\omega_n^{\mu_i}\, \tan(\omega_n^{\mu_i}\, t)} + \frac{1}{\omega_n^{0}\, \tan(\omega_n^{\mu_i}\, t)}  \right|
	+
	\left| \frac{1}{\omega_n^0\, \tan(\omega_n^0\, t)} - \frac{1}{\omega_n^{0}\, \tan(\omega_n^{\mu_i}\, t)} \right|
	\nn\\
	& = \ts{\frac{1}{\omega_n^0}}\, | \tilde p_n^{\mu_i} |\, |\omega_n^{\mu_i} - \omega_n^0 | + \ts{\frac{1}{\omega_n^0}}\, 
	| \cot(\omega_n^0\, t) - \cot(\omega_n^{\mu_i}\, t) |.
	\nn
\end{align}
As $t\in\ol{\Omega}^S$, the map $\omega\mapsto\cot(\omega\, t)$ is continuous for all $\omega$ in a sufficiently small neighbourhood of $\omega_n^0$, $n\in\N\cup\{\infty\}$ fixed. Hence, as $\{\omega_n^{\mu_i}\}_{i\in\N}$ defines a convergent sequence with limit $\omega_n^0\in\R_{>0}$, and $|[\tilde p_t^{\mu_i}]_n|\le L_t$ for all $i\in\N$ by Lemma \ref{lem:optilde-PQR-bounded-mu}, the above inequality implies that
$\lim_{i\rightarrow\infty}  | [\tilde p_t^{\mu_i}]_n - [\tilde p_t^0]_n | \le 0$, so that from the preceding definition of $\alpha_n^i$,
\begin{align}
	& \lim_{i\rightarrow\infty} \alpha_n^i = \alpha_n \doteq 0,
	\qquad n\in\N.
	\label{eq:lim-alpha}
\end{align}
Hence, the Dominated Convergence Theorem \cite[p.77]{R:88}, \er{eq:star}, \er{eq:pre-dominated-cgce}, and \er{eq:lim-alpha} imply that
\begin{align}
	0 & = \sum_{n=1}^\infty \alpha_n 
	= \lim_{i\rightarrow\infty} \sum_{n=1}^\infty \alpha_n^i
	= \lim_{i\rightarrow\infty} \sum_{n=1}^\infty |[\tilde p_t^{\mu_i}]_n - [\tilde p_t^0]_n|^2 \, | \langle \xi, \tilde\varphi_n \rangle_1 |^2
	\nn\\
	& = \lim_{i\rightarrow\infty} \| \optilde{P}_t^{\mu_i}\, \xi - \optilde{P}_t^0\, \xi \|_1,
	\nn
\end{align}
as required. A similar argument yields the corresponding right-hand limit in \er{eq:op-PQR-convergence}.
\end{proof}

\begin{lemma}
\label{lem:pi-0-convergence}
Given any $t\in\ol{\Omega}^S$, $x,z\in\cX_2$, 
\begin{align}
	& 0 = \lim_{i\rightarrow\infty} \| \pi_0^{\mu_i} - \pi_0^0 \|,
	\label{eq:pi-0-convergence}
\end{align}
where $\pi_p^{\mu_i}$, $\pi_0^0$ are as per \er{eq:pi-0-both}.
\end{lemma}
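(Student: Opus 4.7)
The plan is to reduce the assertion to a scalar dominated-convergence argument on the common $\cX$-orthonormal basis $\{\varphi_n\}$. Combining the spectral formulas of Lemma \ref{lem:op-properties} (extended to the long-horizon operators via \er{eq:optilde-PQR-mu}--\er{eq:eig-pqr-tilde-mu}) with the eigenequation $\op{E}_\mu\tilde\varphi_n = \omega_n^\mu\varphi_n$, which persists at $\mu=0$ with $\opA$ in place of $\op{E}_0$ and $\omega_n^0 = \sqrt{\lambda_n}$, gives
\begin{align*}
\pi_0^\mu \ =\ \sum_{n=1}^\infty \bigl[-\cot(\omega_n^\mu\, t)\,\langle x,\tilde\varphi_n\rangle_1 + \csc(\omega_n^\mu\, t)\,\langle z,\tilde\varphi_n\rangle_1\bigr]\,\varphi_n
\end{align*}
for every $\mu\in S\cup\{0\}$, using $\omega_n^\mu[\tilde p_t^\mu]_n = -\cot(\omega_n^\mu t)$ and $\omega_n^\mu[\tilde q_t^\mu]_n = \csc(\omega_n^\mu t)$ from \er{eq:eig-pqr-tilde-mu}. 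Orthonormality of $\{\varphi_n\}$ in $\cX$ then recasts
\begin{align*}
\|\pi_0^{\mu_i} - \pi_0^0\|^2 \ =\ \sum_{n=1}^\infty |c_n^{\mu_i}|^2, \quad c_n^{\mu}\doteq -\Delta_n^\mu\,\langle x,\tilde\varphi_n\rangle_1 + \Sigma_n^\mu\,\langle z,\tilde\varphi_n\rangle_1,
\end{align*}
with $\Delta_n^\mu \doteq \cot(\omega_n^\mu t) - \cot(\omega_n^0 t)$ and $\Sigma_n^\mu \doteq \csc(\omega_n^\mu t) - \csc(\omega_n^0 t)$.

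For pointwise convergence, fix $n$. By \er{eq:omega-lambda} and \er{eq:sequence-S}, $\omega_n^{\mu_i}\to\omega_n^0$; and because $t\in\ol\Omega^S\subset\Omega^0$ excludes every rational multiple of $\pi/(2\,\omega_n^0)$, both $\cot$ and $\csc$ are continuous at $\omega_n^0\, t$. Hence $\Delta_n^{\mu_i},\Sigma_n^{\mu_i}\to 0$, so $c_n^{\mu_i}\to 0$ for each $n\in\N$.

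The main obstacle is producing a summable, $\mu$-independent dominator; this is where the badly-approximable-number estimate embedded in Lemma \ref{lem:optilde-PQR-bounded-mu} is crucial. That lemma delivers $|[\tilde p_t^\mu]_n|,|[\tilde q_t^\mu]_n|\le L_t$ for every $n\in\N$ and $\mu\in[0,1]$, with $L_t$ independent of both. Coupling this with the identities above and the uniform inequality $\omega_n^\mu = \sqrt{\lambda_n/(1+\mu^2\lambda_n)}\le\sqrt{\lambda_n}$ yields $|\cot(\omega_n^\mu t)|,|\csc(\omega_n^\mu t)|\le L_t\sqrt{\lambda_n}$ uniformly in $\mu\in[0,1]$. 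The triangle inequality then gives
\begin{align*}
|c_n^{\mu_i}|^2 \ \le\ 4\,L_t^2\,\lambda_n\,\bigl(|\langle x,\tilde\varphi_n\rangle_1|^2 + |\langle z,\tilde\varphi_n\rangle_1|^2\bigr).
\end{align*}
The relation $\varphi_n = \sqrt{\lambda_n}\,\tilde\varphi_n$ together with the definition of $\langle\cdot,\cdot\rangle_1$ implies $\langle x,\tilde\varphi_n\rangle_1 = \sqrt{\lambda_n}\,\langle x,\varphi_n\rangle$, so Parseval in $\cX$ gives $\sum_n \lambda_n|\langle x,\tilde\varphi_n\rangle_1|^2 = \sum_n \lambda_n^2|\langle x,\varphi_n\rangle|^2 = \|\opA\, x\|^2 <\infty$, using $x\in\cX_2 = \dom(\opA)$ (and similarly for $z$). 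This is precisely why the lemma posits the stronger regularity $x,z\in\cX_2$ rather than merely $\cX_1$. The Dominated Convergence Theorem applied to $\sum_n |c_n^{\mu_i}|^2$ then delivers the claim.
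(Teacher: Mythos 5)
Your proof is correct, and it follows a genuinely different route from the paper's. The paper proves the lemma by an operator-level decomposition: after adding and subtracting, $\pi_0^{\mu_i}-\pi_0^0$ is split into $\op{E}_{\mu_i}[\optilde{P}_t^{\mu_i}-\optilde{P}_t^0]x$, $\op{E}_{\mu_i}[\optilde{Q}_t^{\mu_i}-\optilde{Q}_t^0]z$, and $\opAsqrt(\op{I}_{\mu_i}^{1/2}-\op{I})\opAsqrt[\optilde{P}_t^0 x+\optilde{Q}_t^0 z]$; the operators are commuted with $\opAsqrt$, the first two pieces are killed by the intermediate Lemma \ref{lem:op-PQR-convergence} (applied to $\opAsqrt x,\opAsqrt z\in\cX_1$, which is where $x,z\in\cX_2$ enters), and the third piece is killed by strong convergence of the Yosida contraction $\op{I}_\mu^{1/2}\to\op{I}$. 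You instead bypass Lemma \ref{lem:op-PQR-convergence} entirely and work directly with the scalar spectral coefficients $c_n^\mu$ of $\pi_0^\mu$ in the $\cX$-orthonormal basis $\{\varphi_n\}$, then run a single dominated-convergence argument; the $L_t$ bound of Lemma \ref{lem:optilde-PQR-bounded-mu} gives the $\mu$-uniform dominator, and the identity $\langle x,\tilde\varphi_n\rangle_1 = \sqrt{\lambda_n}\langle x,\varphi_n\rangle$ converts the dominating sum into $\|\opA x\|^2+\|\opA z\|^2$. Both arguments share the same two pillars (badly-approximable bound $L_t$, dominated convergence on the eigenvalue sequences) and both consume $x,z\in\cX_2$ at the same logical place, but your version is more self-contained: it absorbs the work of Lemma \ref{lem:op-PQR-convergence} into a single DCT and makes the role of the $\cX_2$ hypothesis immediately visible, at the cost of not reusing a lemma the paper needed anyway. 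A cosmetic point: the constant in your dominator should be $8L_t^2$ rather than $4L_t^2$ (from $(a+b)^2\le 2a^2+2b^2$ applied after bounding $|\Delta_n^\mu|,|\Sigma_n^\mu|\le 2L_t\sqrt{\lambda_n}$), but this does not affect the argument.
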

\begin{proof}
Fix $t\in\ol{\Omega}^S$, $x,z\in\cX_2$. 
By adding and subtracting terms via \er{eq:pi-0-both}, and recalling the definition of $\op{E}_\mu$ in \er{eq:op-E}, note that
\begin{align}
	& \pi_0^{\mu_i} - \pi_0^0
	= 
	\op{E}_{\mu_i} \, [ \optilde{P}_t^{\mu_i} - \optilde{P}_t^0] \, x 
	+ \op{E}_{\mu_i} \, [ \optilde{Q}_t^{\mu_i} - \optilde{Q}_t^0]\, z
	+ \opAsqrt\, (\op{I}_{\mu_i}^\half - \op{I})\, \opAsqrt \, [  \optilde{P}_t^0 \, x 
	+ \optilde{Q}_t^0 \, z ]\, .
	\nn
\end{align}
Commuting $\opAsqrt$ with $\op{I}_{\mu_i}^\half$, $\optilde{P}_t^{\mu_i}$, $\optilde{Q}_t^{\mu_i}$, $\optilde{P}_t^{0}$, $\optilde{Q}_t^{0}$, and applying the triangle inequality,
\begin{align}
	& \| \pi_0^{\mu_i} - \pi_0^0 \| - \| (\op{I}_{\mu_i}^\half - \op{I})\, \opAsqrt \, [ \optilde{P}_t^0 \, \opAsqrt\, x + \optilde{Q}_t^0 \, \opAsqrt\, z ] \|
	\nn\\
	&
	\le 
	\| \op{I}_{\mu_i}^\half \, \opAsqrt\, [ \optilde{P}_t^{\mu_i} - \optilde{P}_t^0] \, \opAsqrt \, x \|
	+ \| \op{I}_{\mu_i}^\half\, \opAsqrt \, [ \optilde{Q}_t^{\mu_i} - \optilde{Q}_t^0]\, \opAsqrt\, z \|
	\nn\\
	& \le \sup_{\mu\in(0,1]} \| \op{I}_{\mu}^\half \|_{\bo(\cX)} \left( \| [ \optilde{P}_t^{\mu_i} - \optilde{P}_t^0] \, \opAsqrt \, x \|_1
		+ \| [ \optilde{Q}_t^{\mu_i} - \optilde{Q}_t^0]\, \opAsqrt\, z \|_1 \right)
	\nn\\
	&
	\le \| [ \optilde{P}_t^{\mu_i} - \optilde{P}_t^0] \, \opAsqrt \, x \|_1
		+ \| [ \optilde{Q}_t^{\mu_i} - \optilde{Q}_t^0]\, \opAsqrt\, z \|_1\,,
	\nn
\end{align}
in which $\Lambda^\half x,\, \Lambda^\half z\in\cX_1$, and $\|\op{I}_\mu^\half\|_{\bo(\cX)} \le 1$ for all $\mu\in(0,1]$.
Also,
$0 = \lim_{\mu\rightarrow 0} \|(\op{I}_\mu^\half - \op{I})\, \eta\|$ for all $\eta\in\cX$, including for $\eta \doteq \opAsqrt \, [ \optilde{P}^0(t) \, \opAsqrt\, x + \optilde{Q}^0(t) \, \opAsqrt\, z ]\in\cX$. Hence, applying Lemma \ref{lem:op-PQR-convergence} immediately yields \er{eq:pi-0-convergence}.
\end{proof}

\begin{theorem}
\label{thm:TPBVP-converge}
Given any $t\in\ol{\Omega}^S$, $x,z\in\cX_2$,
\begin{align}
	& 0 = \lim_{i\rightarrow\infty} \left\| \left( \ba{c}
		\xi_s^{\mu_i}
		\\ 
		\pi_s^{\mu_i}
	\ea \right)
	- 
	\left( \ba{c}
		x_s^{\mu_i}
		\\ 
		p_s^{\mu_i}
	\ea \right)
	\right\|_\sharp
	\label{eq:TPBVP-converge}
\end{align}
for all $s\in(0,t)\cap\ol{\Omega}^S$, where $(\xi_s^{\mu_i}, \pi_s^{\mu_i})$ and $(x_s^{\mu_i}, p_s^{\mu_i})$ are as per \er{eq:TPBVP-solutions}. Furthermore, the solution to TPBVP \er{eq:TPBVP}, and a sequence of approximations to it, are given respectively by $\pi_0^0$ and $\{\pi_0^{\mu_i}\}_{i\in\N}$ as per \er{eq:pi-0-both}. 
\end{theorem}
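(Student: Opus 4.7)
The plan is to prove the two assertions in sequence: first the pointwise convergence \er{eq:TPBVP-converge}, then the identification of $\pi_0^0$ as a solution of TPBVP \er{eq:TPBVP} with $\{\pi_0^{\mu_i}\}_{i\in\N}$ as an approximating sequence.

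For \er{eq:TPBVP-converge}, fix $s\in(0,t)\cap\ol{\Omega}^S$. By Theorem \ref{thm:group-rep-mu}, $\optilde{U}_s^{\mu_i} = \op{U}_s^{\mu_i}$ on $\Omega^{\mu_i}$, so the relevant difference becomes $\op{U}_s^{\mu_i}(x,\pi_0^{\mu_i}) - \op{U}_s(x,\pi_0^{\mu_i})$. I would insert the intermediate terms $\op{U}_s^{\mu_i}(x,\pi_0^0)$ and $\op{U}_s(x,\pi_0^0)$, splitting the difference via linearity into $\op{U}_s^{\mu_i}(0,\pi_0^{\mu_i}-\pi_0^0)$, $\op{U}_s^{\mu_i}(x,\pi_0^0) - \op{U}_s(x,\pi_0^0)$, and $\op{U}_s(0,\pi_0^0-\pi_0^{\mu_i})$. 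The first and third are bounded by $Me^{\omega s}\|\pi_0^{\mu_i}-\pi_0^0\|$ via the uniform estimate \er{eq:uniform-semigroup-bound} (and its limit for $\op{U}_s$), both vanishing as $i\to\infty$ by Lemma \ref{lem:pi-0-convergence}. The middle piece vanishes by the strong convergence property in Theorem \ref{thm:generators}(vi). Assembling the three via the triangle inequality yields \er{eq:TPBVP-converge}.

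For the TPBVP assertion, the key intermediate step is to verify that the approximate solution hits the target exactly, i.e.\ $\xi_t^{\mu_i}=z$ for all $i\in\N$. Applying the blocks $[\optilde{U}_t^{\mu_i}]_{11}$ and $[\optilde{U}_t^{\mu_i}]_{12}$ from \er{eq:prototype-mu-elmts-long} to $(x,\pi_0^{\mu_i})$ and substituting $\pi_0^{\mu_i}=\op{E}_{\mu_i}(\optilde{P}_t^{\mu_i}x+\optilde{Q}_t^{\mu_i}z)$ from \er{eq:pi-0-both} gives the telescoping identity
\begin{align}
\xi_t^{\mu_i}
&= -(\optilde{Q}_t^{\mu_i})^{-1}\optilde{P}_t^{\mu_i}\,x + (\optilde{Q}_t^{\mu_i})^{-1}\op{E}_{\mu_i}^{-1}\pi_0^{\mu_i} \nn\\
&= -(\optilde{Q}_t^{\mu_i})^{-1}\optilde{P}_t^{\mu_i}\,x + (\optilde{Q}_t^{\mu_i})^{-1}(\optilde{P}_t^{\mu_i}\,x + \optilde{Q}_t^{\mu_i}\,z) = z, \nn
\end{align}
all manipulations taking place in $\bo(\cX_1)$ via the long-horizon extension of Lemma \ref{lem:op-properties} for $t\in\Omega^{\mu_i}$.

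Combining this with \er{eq:TPBVP-converge} at $s=t$ gives $x_t^{\mu_i}\to z$. Since $(x_t^{\mu_i},p_t^{\mu_i}) = \op{U}_t(x,\pi_0^{\mu_i})$ by \er{eq:TPBVP-solutions} and $\op{U}_t$ is bounded and linear, Lemma \ref{lem:pi-0-convergence} also yields $(x_t^{\mu_i},p_t^{\mu_i})\to\op{U}_t(x,\pi_0^0)$, whence $[\op{U}_t(x,\pi_0^0)]_1=z$. Hence the solution of \er{eq:wave} with $x(0)=x$ and $\dot{x}(0)=\pi_0^0$ satisfies $x(t)=z$, so that $p=\pi_0^0$ solves TPBVP \er{eq:TPBVP} and $\{\pi_0^{\mu_i}\}_{i\in\N}$ approximates it directly by Lemma \ref{lem:pi-0-convergence}. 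The main obstacle is the bookkeeping for the cancellation $\xi_t^{\mu_i}=z$: it requires $\optilde{P}_t^{\mu_i}$, $\optilde{Q}_t^{\mu_i}$, $(\optilde{Q}_t^{\mu_i})^{-1}$, $\op{E}_{\mu_i}$ and $\op{E}_{\mu_i}^{-1}$ to all act correctly on $\cX_1$ for horizons $t$ beyond $\bar{t}^{\mu_i}$, which is precisely what the long-horizon construction \er{eq:prototype-mu-elmts-long} supplies; the remaining pieces are immediate consequences of the earlier lemmas.
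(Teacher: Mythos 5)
Your proof is correct and follows essentially the same route as the paper: decompose $\optilde{U}_s^{\mu_i}(x,\pi_0^{\mu_i})-\op{U}_s(x,\pi_0^{\mu_i})$ so that one piece is controlled by the uniform bound \er{eq:uniform-semigroup-bound} together with Lemma \ref{lem:pi-0-convergence} and the other by the strong convergence of $\op{U}_s^{\mu_i}\to\op{U}_s$ (Theorem \ref{thm:generators}(vi), equivalently Theorem \ref{thm:convergence}), and then verify $\xi_t^{\mu_i}=z$ by inverting the long-horizon block structure \er{eq:prototype-mu-elmts-long}. The paper uses a two-term rather than your three-term splitting, but the estimates are identical; and your final argument that $[\op{U}_t(x,\pi_0^0)]_1=z$, by passing to the limit in $\op{U}_t(x,\pi_0^{\mu_i})$ and matching limits with $\xi_t^{\mu_i}\equiv z$, spells out what the paper merely asserts ``subsequently follows by Lemma \ref{lem:pi-0-convergence}'', so this is a slight gain in explicitness rather than a different approach. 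One small caveat: you invoke \er{eq:TPBVP-converge} at $s=t$, which lies outside the theorem's stated range $s\in(0,t)\cap\ol{\Omega}^S$; the proof does extend to $s=t$ since $t\in\ol{\Omega}^S$ and none of the ingredients require $s<t$, but it is worth flagging that you are relying on the argument rather than the literal statement.
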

\begin{proof}
Fix $t\in\ol{\Omega}^S$, $x,z\in\cX_2$, $s\in(0,t)\cap\ol{\Omega}^S$. Subtracting the left- from the right-hand trajectory in \er{eq:TPBVP-solutions}, 
\begin{align}
	& \!\! \left\| \left( \ba{c}
		\xi_s^{\mu_i} - x_s^{\mu_i}
		\\
		\pi_s^{\mu_i} - p_s^{\mu_i}
	\ea \right) \right\|_\sharp
	= \left\| [ \, \optilde{U}_s^{\mu_i} - \op{U}_s ] \left( \ba{c}
								x \\ \pi_0^{\mu_i}
						\ea \right) 
	\right\|_\sharp
	\nn\\
	& \!\! \le \| \optilde{U}_s^{\mu_i} -  \op{U}_s \|_{\bo(\cY)} \left\| \left( \!\! \ba{c}
									0 \\ \pi_0^{\mu_i} - \pi_0^0
								\ea \!\! \right) \right\|_\sharp \!
				+ {\left\| [ \optilde{U}_s^{\mu_i} - \op{U}_{s} ] \left( \!\! \ba{c}
											x \\ \pi_0^0
										\ea \!\! \right) \right\|_\sharp}
	\nn\\
	& \!\! \le 2\, M \exp(\omega\, s)\, \| \pi_0^{\mu_i} - \pi_0^0 \| 
				+ {\left\| [ \optilde{U}_s^{\mu_i} - \op{U}_s ] \left( \ba{c}
											x \\ \pi_0^0
										\ea \right) \right\|_\sharp}\,,
	\nn
\end{align}
in which assertions {\em (ii)} and {\em (iv)} of Theorem \ref{thm:generators} imply that 
$$
	\max(\sup_{i\in\N} \|\optilde{U}_s^{\mu_i} \|_{\bo(\cY)}, \|\op{U}_s \|_{\bo(\cY)}) \le M\, \exp(\omega\, s)
$$
for some $M,\omega\in\R_{\ge 0}$.
Taking the limit as $i\rightarrow\infty$ via Lemma \ref{lem:pi-0-convergence} and Theorem \ref{thm:convergence}, convergence of the solutions \er{eq:TPBVP-solutions} to the wave equation \er{eq:wave} and its approximation \er{eq:wave-mu} follows. 

It remains to verify that the $\{\pi_0^{\mu_i}\}_{i\in\N}$ defines a convergent sequence of solutions to TPBVP \er{eq:TPBVP}. (Note that the conclusion that $\pi_0^0$ of \er{eq:pi-0-both} describes a solution to TPBVP \er{eq:TPBVP} subsequently follows by Lemma \ref{lem:pi-0-convergence}.) To this end, recalling \er{eq:prototype-mu-long}, \er{eq:prototype-mu-elmts-long}, and Theorem \ref{thm:group-rep-mu}, note that the propagated final state $\xi_t^{\mu_i}$ of the approximating Cauchy problem given by the right-hand equations in \er{eq:Cauchy}, \er{eq:wave-mu} with $\mu = \mu_i$,
%
%
corresponding to initialisation $(x, \pi_0^{\mu_i})$, is
\begin{align}
	& \xi_t^{\mu_i}
	= [\optilde{U}_t^{\mu_i}]_{11} \, x + [\optilde{U}_t^{\mu_i}]_{11} \, \pi_0^{\mu_i}
	= -(\optilde{Q}_t^{\mu_i})^{-1} \, \optilde{P}_t^{\mu_i}\, x + (\optilde{Q}_t^{\mu_i})^{-1}\, \op{E}_{\mu_i}^{-1}\, \pi_0^{\mu_i}
	\nn\\
	& = -(\optilde{Q}_t^{\mu_i})^{-1} \, \optilde{P}_t^{\mu_i}\, x + (\optilde{Q}_t^{\mu_i})^{-1}\, \op{E}_{\mu_i}^{-1}\, \op{E}_{\mu_i}
	( \optilde{P}_t^{\mu_i}\, x + \optilde{Q}_t^{\mu_i}\, z )
	= z,
	\nn
\end{align}
as required.
\end{proof}

\rev{
%
Given $x,z,\in\cX_1$, $\mu\in S$, and $t\in\ol{\Omega}^s$, Lemma \ref{lem:pi-0-convergence} and Theorem \ref{thm:TPBVP-converge} indicate that  TPBVP \er{eq:TPBVP} has the solution $\pi_0 = \pi_0^0$, which can be approximated by $\pi_0^\mu$ as per \er{eq:pi-0-both}. 
The spectral representations \er{eq:op-properties} and \er{eq:optilde-PQR-mu} imply that 
\begin{align}
	\pi^\mu
	& = \sum_{n=1}^\infty \frac{\lambda_n}{1 + \mu^2\, \lambda_n} 
			\left[ \tilde p_n^\mu\, \langle x,\, \tilde\varphi_n \rangle_1 + \tilde q_n^\mu\, \langle z,\, \tilde\varphi_n \rangle_1 \right] \tilde\varphi_n
	\label{eq:TPBVP-sol}
\end{align}
where $\lambda_n^{-1}$ and $\tilde\varphi_n$ denote the eigenvalues and eigenvectors of $\Lambda^{-1}\in\bo(\cX)$.
}


\subsection{Example}
In order to illustrate \rev{an} application of \er{eq:TPBVP-sol}, 
\rev{select} $X \doteq [0,1]^2\subset\R^2$, and define
\begin{align}
	\begin{aligned}
	\cX & \doteq \Ltwo(X;\R)\,,
	\quad
	&& \cX_0 \doteq \funspace{H}_0^2(X;\R),
	%
	\\
	\Lambda
	& \doteq -\partial_1^2 - \partial_2^2\,,
	&& 
	\dom (\Lambda) = \cX_0\,,
	\end{aligned}
	\label{eq:ex-TPBVP}
\end{align}
\rev{where} $\partial_{1}$ and $\partial_{2}$ denote the partial derivative operators defined with respect to the first and second cartesian coordinates in $\R^2$ respectively.
It may be noted that $-\Lambda$ is the Laplacian operator on $X$, and $\Lambda$ is linear, unbounded, positive, self-adjoint, and in the possession of a compact inverse. 
%
Its eigenvalues $\lambda_{n,m}^{-1}\in\R_{>0}$ and eigenvectors $\tilde\varphi_{n,m}\in\cX_1$ are defined respectively by $\lambda_{n,m} \doteq (n^2 + m^2)\, \pi^2$ and $\tilde\varphi_{n,m}(x_1,x_2) \doteq (2/\sqrt{\lambda_{n,m}})\, \sin(n\, \pi\, x_1)\, \sin(m\, \pi\, x_2)$ for all $n,m\in\N$, $(x_1,x_2)\in X$. 

For illustrative purposes, the specific initial state $x\in\cX_0$ is chosen (arbitrarily) to be the zero function on $X$, while the terminal state $z\in\cX_0$ is selected to be as per Figure \ref{fig:ex-TPBVP-a}. A horizon $t\doteq \pi/3$ is assumed. The initial velocity $\dot x(0) = \pi_0^0$ obtained in the $\mu=0$ limit in \er{eq:TPBVP-sol} is illustrated in Figure \ref{fig:ex-TPBVP-b}. By propagating the initial state $x(0)=x$ and velocity $\dot x(0)$ forward in time, it may be seen that \er{eq:TPBVP-sol} does indeed solve the two-point boundary value problem of interest, see Figure \ref{fig:evolution}.

\begin{figure}[h]
\vspace{0mm}
\begin{center}
\psfrag{yyy}{$x_2$}
\psfrag{zzz}{$z$}
\psfrag{xxx}{$x_1$}
\includegraphics[width=70mm]{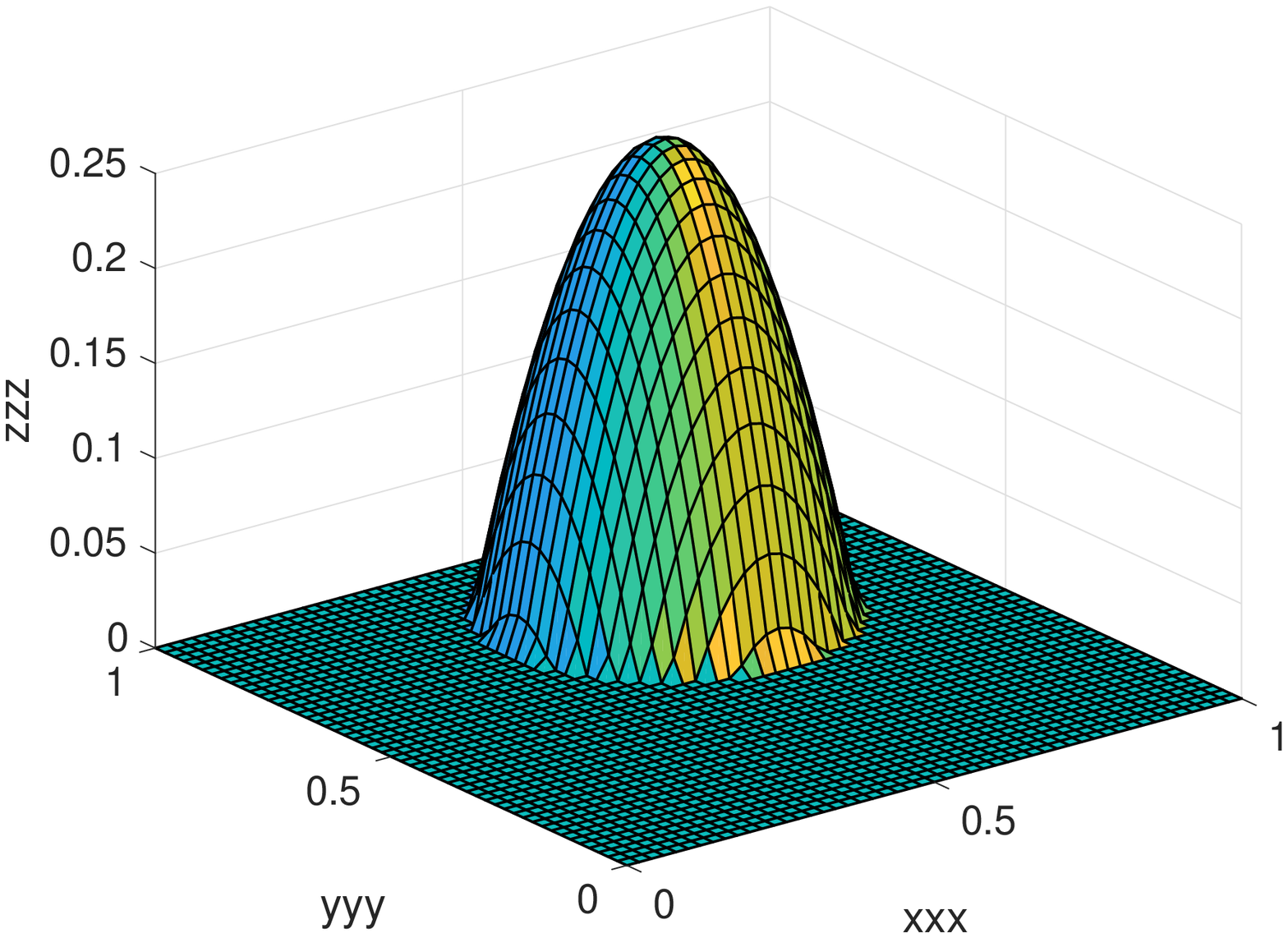}
\caption{Desired terminal state $z\in\cX_0$ for all $(x_1,x_2)\in X$ for TPBVP \er{eq:ex-TPBVP}.}
\label{fig:ex-TPBVP-a}
\end{center}
\vspace{-5mm}
\end{figure}%
\begin{figure}[h]
\begin{center}
\vspace{5mm}
\psfrag{xxx}{$x_1$}
\psfrag{yyy}{$x_2$}
\psfrag{zzz}{$\dot x(0)$}
\includegraphics[width=70mm]{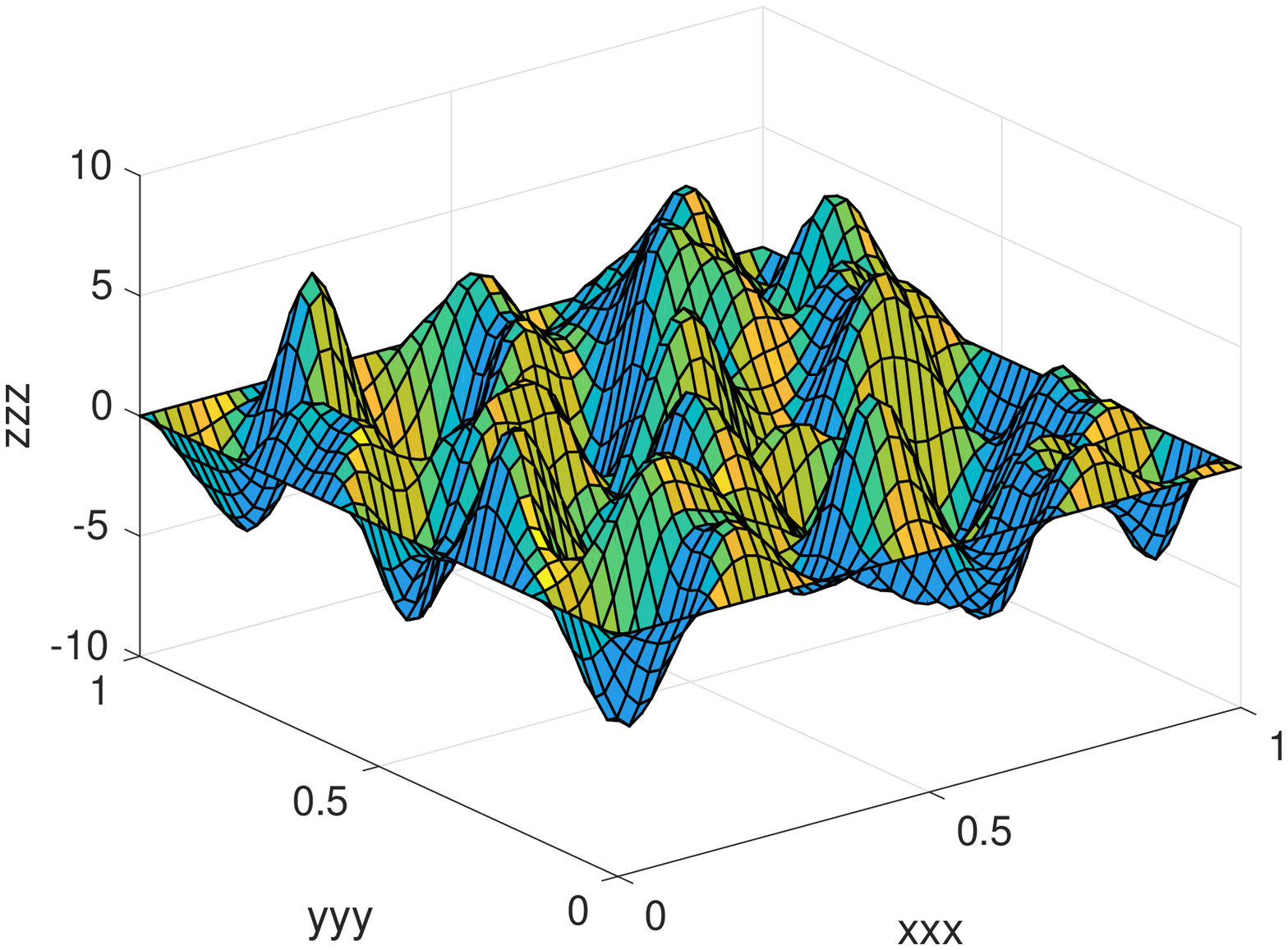}
\caption{Computed solution $\dot x(0)\in\cX_0$ of \er{eq:TPBVP-sol} for TPBVP \er{eq:ex-TPBVP}.}
\label{fig:ex-TPBVP-b}
\end{center}
\end{figure}

\begin{figure}[h]
\vspace{0mm}
\begin{center}
\psfrag{xxx}{$x_1$}
\psfrag{yyy}{$x_2$}
\psfrag{zzz}{$z$}
\includegraphics[width=100mm]{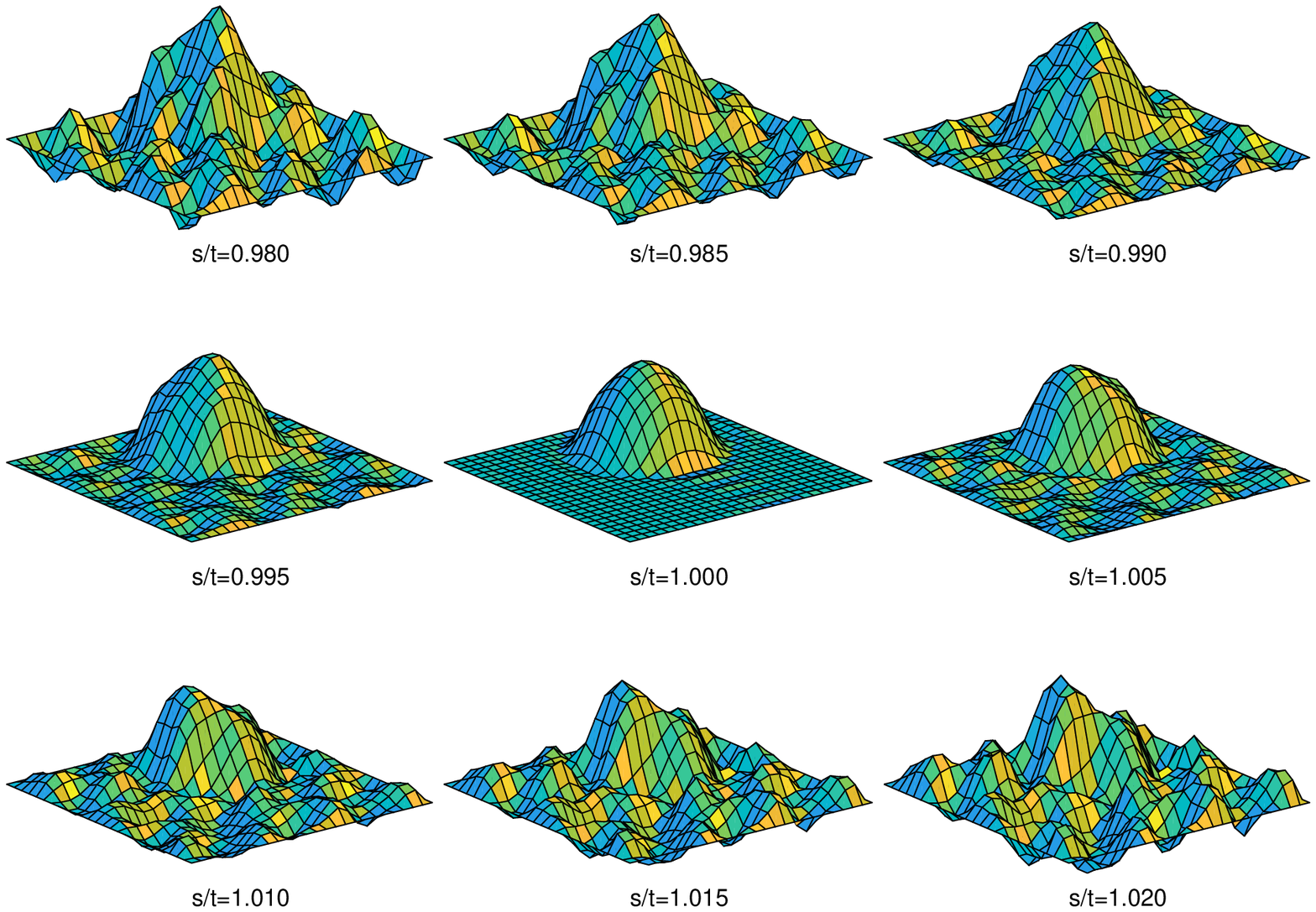}
\caption{Solution of \er{eq:wave} propagated forward from $x(0) = x$, $\dot x(0)$ as per \er{eq:TPBVP-sol}, to $s/t\in[0.98,1.02]$.}
\label{fig:evolution}
\end{center}
\end{figure}



\section{Conclusions}
\label{sec:conc}
A representation for the fundamental solution group for a class of wave equations is constructed via \rev{Hamilton's action} principle and an optimal control problem. In particular, solutions of a wave equation in the class of interest are identified as rendering a corresponding action functional stationary. By encapsulating this action functional in an optimal control problem, these solutions are expressed as the corresponding optimal dynamics involved. By employing a idempotent convolution kernel to equivalently represent the value of the optimal control problem, a prototype of an approximation of the fundamental solution group involved is obtained. However, as the action functional loses concavity (in this case) for longer time horizons, the prototype fundamental solution group is restricted to short time horizons. This restriction is subsequently relaxed via a relaxation of the optimal control problem to include stationary (rather than exclusively optimal) payoffs. The approximate fundamental solution group obtained, and its limit, are verified via the Trotter-Kato theorem to correspond to that of the class of approximating wave equations, and the exact wave equation respectively, of interest. They are applied in posing a TPBVP involving these equations, and finding its solution.\\[0mm]



\noindent
{\bf Acknowledgements.}
The authors acknowledge funding support provided by the US Air Force Office of Scientific Research. 




\bibliographystyle{IEEEtran}
\bibliography{wave-arXiv}

\begin{thebibliography}{10}
\providecommand{\url}[1]{#1}
\csname url@samestyle\endcsname
\providecommand{\newblock}{\relax}
\providecommand{\bibinfo}[2]{#2}
\providecommand{\BIBentrySTDinterwordspacing}{\spaceskip=0pt\relax}
\providecommand{\BIBentryALTinterwordstretchfactor}{4}
\providecommand{\BIBentryALTinterwordspacing}{\spaceskip=\fontdimen2\font plus
\BIBentryALTinterwordstretchfactor\fontdimen3\font minus
  \fontdimen4\font\relax}
\providecommand{\BIBforeignlanguage}[2]{{%
\expandafter\ifx\csname l@#1\endcsname\relax
\typeout{** WARNING: IEEEtran.bst: No hyphenation pattern has been}%
\typeout{** loaded for the language `#1'. Using the pattern for}%
\typeout{** the default language instead.}%
\else
\language=\csname l@#1\endcsname
\fi
#2}}
\providecommand{\BIBdecl}{\relax}
\BIBdecl

\bibitem{F:48}
R.~Feynman, ``Space-time approach to non-relativistic quantum mechanics,''
  \emph{Rev. Mod. Phys.}, vol.~20, p. 367, 1948.

\bibitem{F:64}
R.~Feynman, R.~Leighton, and M.~Sands, \emph{The {F}eynman lectures on
  physics}, 2nd~ed.\hskip 1em plus 0.5em minus 0.4em\relax Addison-Wesley,
  1964, vol.~2.

\bibitem{GT:07}
C.~Gray and E.~Taylor, ``When action is not least,'' \emph{Am. J. Phys.},
  vol.~75, no.~5, 2007.

\bibitem{MD1:15}
W.~McEneaney and P.~Dower, ``The principle of least action and fundamental
  solutions of mass-spring and {$N$}-body two-point boundary value problems,''
  \emph{SIAM J. Control \& Optimization}, vol.~53, no.~5, pp. 2898--2933, 2015.

\bibitem{DM1:17}
P.~Dower and W.~McEneaney, ``Solving two-point boundary value problems for a
  wave equation via the principle of stationary action and optimal control,''
  \emph{SIAM J. Control \& Optimization}, vol.~55, no.~4, pp. 2151--2205, 2017.

\bibitem{P:83}
A.~Pazy, \emph{Semigroups of linear operators and applications to partial
  differential equations}, ser. Applied Mathematical Sciences.\hskip 1em plus
  0.5em minus 0.4em\relax Springer-Verlag, 1983, vol.~44.

\bibitem{CZ:95}
R.~Curtain and H.~Zwart, \emph{An introduction to infinite-dimensional linear
  systems theory}, ser. Texts in Applied Mathematics.\hskip 1em plus 0.5em
  minus 0.4em\relax Springer-Verlag, 1995, vol.~21.

\bibitem{KM:97}
V.~Kolokoltsov and V.~Maslov, \emph{Idempotent analysis and
  applications}.\hskip 1em plus 0.5em minus 0.4em\relax Kluwer Publishing
  House, 1997.

\bibitem{DM1:15}
P.~Dower and W.~McEneaney, ``A max-plus dual space fundamental solution for a
  class of operator differential {R}iccati equations,'' \emph{SIAM J. Control
  \& Optimization}, vol.~53, no.~2, pp. 969--1002, 2015.

\bibitem{MD1:17}
W.~McEneaney and P.~Dower, ``Staticization, its dynamic program, and solution
  propagation,'' \emph{Automatica}, vol.~81, pp. 56--67, 2017.

\bibitem{DM3:15}
P.~Dower and W.~McEneaney, ``A max-plus fundamental solution semigroup for a
  class of lossless wave equations,'' in \emph{proc. SIAM Conference on Control
  Theory and Its Applications (Paris, France), 2015}, 2015, pp. 400--407.

\bibitem{DM4:15}
------, ``An optimal control approach to the approximation of fundamental
  solution groups for lossless wave equations,'' in \emph{proc. $54^{th}$ IEEE
  Conference on Decision and Control (Osaka, Japan)}, 2015, pp. 3882--3887.

\bibitem{DMZ1:15}
P.~Dower, W.~McEneaney, and H.~Zhang, ``Max-plus fundamental solution
  semigroups for optimal control problems,'' in \emph{proc. SIAM Conference on
  Control Theory and Its Applications (Paris, France), 2015}, 2015, pp.
  368--375.

\bibitem{D1:19}
P.~Dower, ``An adaptive max-plus eigenvector method for continuous time optimal
  control problems,'' in \emph{Numerical methods for optimal control}, ser.
  INDAM, M.~Falcone, R.~Ferretti, L.~Grune, and W.~McEneaney, Eds.\hskip 1em
  plus 0.5em minus 0.4em\relax Springer, 2018.

\bibitem{DZ1:17}
\BIBentryALTinterwordspacing
P.~Dower and H.~Zhang, ``A max-plus primal space fundamental solution for a
  class of differential {R}iccati equations,'' \emph{Mathematics of Control,
  Signals, and Systems}, vol.~29, no.~3, pp. 1--33, 2017. [Online]. Available:
  \url{http://dx.doi.org/10.1007/s00498-017-0200-2}
\BIBentrySTDinterwordspacing

\bibitem{R:74}
R.~Rockafellar, ``Conjugate duality and optimization,'' \emph{SIAM Regional
  Conf. Series in Applied Math.}, vol.~16, 1974.

\bibitem{DS:12}
S.~G. Dani and H.~Shah, ``Badly approximable numbers and vectors in
  {C}antor-like sets,'' \emph{Proceedings of the American Mathematical
  Society}, vol. 140, no.~8, pp. 2575--2587, 2012.

\bibitem{R:88}
W.~Ray, \emph{Real Analysis}.\hskip 1em plus 0.5em minus 0.4em\relax
  Prentice-Hall, 1988.

\end{thebibliography}


\end{document}